 \pdfoutput=1

\documentclass[11pt]{amsart}
\usepackage{amsmath,amssymb}
\usepackage{graphicx}
\usepackage{amsfonts,amscd,latexsym}

\newcommand{\labell}[1] {\label{#1}}

\textwidth5.7in \textheight7.8in \voffset=-0.5in
\hoffset=-0.5in

\newlength{\facewd} \newlength{\faceht}%

\newcommand{\ND}{{\mathcal {ND}}}
\newcommand{\Starr}{{\rm Star}}
\newcommand{\starr}{{\rm star}}
\newcommand{\istar}{{{ {\rm Star^*} }}}
\newcommand{\pr}{{\rm pr}}
\newcommand{\Isom}{{\rm Isom\,}}

\renewcommand{\Hat}{\widehat}
\newcommand{\intt}{{\rm int\,}}

\newcommand{\less}{{\smallsetminus}}
\newcommand{\sspan}{{\rm span}}

\newcommand{\p}{{\partial}}
\newcommand{\al}{{\alpha}}

\newcommand{\be}{{\beta}}

\newcommand{\om}{{\omega}}
\newcommand{\eps}{{\varepsilon}}
\newcommand{\vareps}{{\epsilon}}

\newcommand{\De}{{\Delta}}

\newcommand{\ka}{{\kappa}}
\newcommand{\la}{{\lambda}}
\newcommand{\La}{{\Lambda}}

\newcommand{\Cc}{{\mathcal C}}

\newcommand{\Ss}{{\mathcal S}}

\newcommand{\ov}{\overline}

\newcommand{\aff}{{\it aff}}

\renewcommand{\Tilde}{\widetilde}

\newcommand{\ft}{{\mathfrak t}}

\newcommand{\PP}{{\mathbb P}}

\newcommand{\R}{{\mathbb R}}
\newcommand{\C}{{\mathbb C}}

\newcommand{\Z}{{\mathbb Z}}

\newcommand{\Ham}{{\rm Ham}}

\newcommand{\Vv}{{\mathcal V}}

\newcommand{\SSS}{{\smallskip}}

\newtheorem{theorem}{Theorem}[section]
\newtheorem{thm}[theorem]{Theorem}

\newtheorem{cor}[theorem]{Corollary}
\newtheorem{lemma}[theorem]{Lemma}

\newtheorem{prop}[theorem]{Proposition}

\newtheorem{defn}[theorem]{Definition}
\newtheorem{example}[theorem]{Example}

\newtheorem{rmk}[theorem]{Remark}

\numberwithin{figure}{section}
\numberwithin{equation}{section}
\numberwithin{table}{section}

\newcommand{\MS}{{\medskip}}

\newcommand{\NI}{{\noindent}}

\begin{document}

\title{Displacing Lagrangian toric fibers via probes}
\author{Dusa McDuff}\thanks{Partially supported by NSF grants DMS 0604769
and DMS 0905191}
\address{Department of Mathematics,
Barnard College, Columbia University}
\email{dusa@math.columbia.edu}
\keywords{symplectic toric manifold, Lagrangian fiber, reflexive polytope,
Fano polytope, Ewald conjecture, Floer homology, monotone polytope}
\subjclass[2000]{53D12, 52B20, 14M25}

\date{October 26, 2009, rev. February 25, 2010.}

%
 
 \begin{abstract}  This note studies the geometric structure of monotone  moment polytopes (the duals of smooth Fano polytopes) using probes.  The latter  are line segments that enter the polytope at an interior point of a facet and whose 
direction is  integrally transverse to this facet.  A point inside the polytope is displaceable by a probe if it lies less than half way along it.  Using a construction due to
Fukaya--Oh--Ohta--Ono, we show that
every rational polytope has a central point that is not displaceable by probes.
In the monotone case, this central point is its unique interior integral point, and
 we show that every other point is displaceable by probes if and only if the  polytope satisfies the star Ewald condition.  (This is a strong version of the Ewald conjecture concerning the integral symmetric points 
in the polytope.)  
Further, in dimensions up to and including three
every monotone polytope is star Ewald.  These results are closely related to
the Fukaya--Oh--Ohta--Ono calculations of the Floer homology
of the Lagrangian fibers of a toric symplectic manifold, and have applications to questions introduced by Entov--Polterovich about the displaceability of these fibers.
 \end{abstract}
 
 \maketitle
\section{Introduction}\labell{s:intro}

Symplectic toric manifolds form a nice family of examples in which to test various ideas; they can be described by simple diagrams and many of their invariants can be explicitly calculated. In this paper we  discuss some geometric problems that 
 arise when studying their natural family of Lagrangian submanifolds.  
 Before describing our results, we shall illustrate the main   definitions by some examples.

A symplectic toric  manifold is a $2n$-dimensional symplectic manifold with
an action of the $n$-torus $T^n$ that is Hamiltonian, i.e. the action of the 
$i$th component circle is induced by a function $H_i:M\to \R$.
These functions $H_i$ fit together to give the moment map 
$$
\Phi:M\to \R^n,\quad \Phi(x) = \bigl(H_1(x),\dots,H_n(x)\bigr),
$$
whose image is called the {\it moment polytope}.
The first examples are:\MS

\NI
$\bullet$  $S^2$ with its standard 
area form (normalized to have area $2$) and with the $S^1$ action that rotates about the north-south axis; the corresponding Hamiltonian is the height function, and its moment polytope
 is the $1$-simplex $[-1,1]$; see Fig. \ref{fig:11}.\SSS

\begin{figure}[htbp] 
   \centering
 \includegraphics[width=3in]{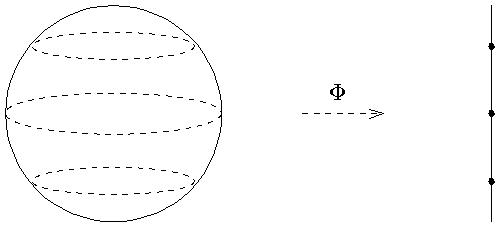} 
   \caption{The $S^1$ action on $S^2$ with some of its orbits.  Clearly  each of them except for the equator can be displaced by an area preserving isotopy.}
   \label{fig:11}
\end{figure}

\NI $\bullet$  $\R^{2n}=\C^n$ with the usual  symplectic form,
normalized as 
$$
\om: = \sum \frac i{2\pi} dz_j\wedge d{\ov z}_j = \frac 1{\pi}\sum_{k=1}^{n} dx_{2k-1}\wedge dx_{2k},
$$
and $T^n$ action $(z_1,\dots,z_n)\mapsto (e^{2\pi i t_1} z_1,\dots,e^{2\pi i t_n}z_n)$ for $t_j\in S^1\equiv \R/\Z$.  Then the moment map is 
\begin{equation}\label{eq:Rn}
\Phi_0: (z_1,\dots,z_n)\mapsto \Bigl(|z_1|^2,\dots,|z_n|^2\Bigr)\in \R^n,
\end{equation}
with image the first quadrant $\R^n_+$ in $\R^n$. Note that the inverse image under $\Phi$ of the line segment $(x, c_2,\dots,c_n),  0\le x\le a$, where $c_j>0$ for $j>1$,
is the product $D^2(a)\times T^{n-1}$,  where $D^2(a)$ is a disc of $\om$-area $a$ in the first factor $\C$ and the torus is a product of the
circles  $|z_j|=\sqrt c_j$ in the other copies of $\C$.

\SSS

\NI $\bullet$  the projective plane $\C P^2$ with the Fubini--Study form
and $T^2$ action $[z_0;z_1;z_2]\mapsto [z_0;\la_1z_2;\la_2z_2]$.
The moment map is 
$$
[z_0;z_1;z_2]\mapsto  \bigl(\frac{|z_1|^2}{\sum_i|z_i|^2},
\frac{|z_2|^2}{\sum_i|z_i|^2}\bigr),
$$
with image the standard triangle $\{x_1,x_2\ge 0, x_1+x_2\le 1\}$ in $\R^2$.\MS
\SSS

More moment polytopes are illustrated in Figures \ref{fig:probe0} and \ref{fig:7} below.  Good references are Audin \cite{Au} and Karshon--Kessler--Pinsonnault \cite{KKP}.  Notice that the moment map simply 
quotients out by the $T^n$ action.

One important fact here is that the moment polytope
$\Phi(M)$ is a convex polytope $\De$, satisfying certain integrality conditions at each vertex.  (We give a precise statement in Theorem \ref{thm:delz} below.) Another is that the symplectic form on $M$ is determined by the polytope $\De$; indeed every point in $(M,\om)$ has a Darboux chart that is equivariantly symplectomorphic to a set of the form $\Phi_0^{-1}(V)$, where $V$
is a neighborhood of some point in the first quadrant $\R^n_+$.   Thus locally the action looks like that of $T^n$ in $\C ^n$.  In particular the (regular) orbits of $T^n$ are Lagrangian.\footnote
{One way to prove this is to note that  the functions $H_i$ are in involution; 
the Poisson brackets $\{H_i,H_j\}$ vanish because $T^n$ is abelian.} 
Hence
 the inverse image of each interior point $u\in \intt\De$ is a smooth Lagrangian manifold $L_u$, that is $\om|_{L_u} = 0$.
 
This note addresses the question of which of these toric fibers $L_u$ are {\it displaceable by a Hamiltonian isotopy,}  i.e. are such that there is a family of functions $H_t:M\to \R, t\in [0,1],$ whose associated flow $\phi_t, t\in [0,1],$ 
has the property that
$\phi_1(L_u)\cap L_u=\emptyset$.

This question was first considered (from different points of view)
in \cite{BEP} and  \cite{Cho}.
Biran, Entov and Polterovich  showed in \cite{BEP}
 that if the quantum homology $QH_*(M)$ 
(taken with appropriate coefficients) has a field summand  
then at least one of the fibers $L_u$ is 
nondisplaceable. In later work (cf. \cite[Theorem~2.1]{EP2}), Entov--Polterovich managed to dispense with the condition on quantum homology.
Even more recently,
they showed in \cite[Theorem~1.9]{EPrig} by a dynamical argument that if in addition $(M,\om)$ is {\it monotone}, i.e. $[\om]$ is a positive multiple of the first Chern class $c_1(M)$, then   
 the fiber over the so-called {\it special point} $u_0$ of $\De$ 
is nondisplaceable.\footnote
{If $\om$ is  normalized so that $[\om]=c_1(M)$, 
then the smooth moment polytope $\De$ is dual to an integral  Fano polytope $P$ and $u_0$ is its unique interior integral point which is usually placed at $\{0\}$.  We call such polytopes $\De$ {\it monotone};
 cf. Definition \ref{def:mono}.}   
However, for general polytopes their argument
gives no information about which fibers might be nondisplaceable.

Cho \cite{Cho}, and later 
 Fukaya, Oh, Ohta and Ono \cite{FOOO1,FOOO2}, took a 
more constructive approach to this problem.
The upshot of this work is that 
for any toric manifold,
one can define
 Floer homology groups 
$HF^*(L_u,\chi)$  
(depending on various deformation parameters $\chi$) that
 vanish whenever $L_u$ is displaceable.  
Moreover, in \cite[\S9]{FOOO1}, the authors  construct
 a point $v_0$
for which this Floer homology does not vanish for suitable $\chi$, at least in the  case
when $[\om]$ is a rational cohomology class. 
 They show in \cite[Thm.~1.5]{FOOO1} that even in the nonrational case the corresponding fiber $L_{v_0}$ cannot be displaced.  They also show in the monotone case  that $v_0$ coincides with the special point $u_0$ and that
 $HF^*(L_u,\chi)=0$ for all other $u$;  cf. \cite[Thm.~7.11]{FOOO1}.

One of the main motivating questions for the current study
 was raised by
Entov and Polterovich, who ask in \cite{EPrig} whether the special fiber 
$L_{u_0}$ is a {\it stem}, that is, whether all {\it other } fibers are displaceable.  The results stated above show that from the Floer theory point of view this holds.

In this paper we develop  a geometric way
to displace toric fibers using probes.   Our method is based on
the geometry of the moment  polytope $\De$ and makes sense 
for general rational polytopes.
Using it, we show:

\begin{thm}\labell{thm:3}   If $(M,\om)$ is a monotone toric symplectic manifold of (real) dimension $\le 6$ then the special fiber $L_{u_0}$ is a stem.
\end{thm}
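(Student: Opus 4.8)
The plan is to reduce Theorem \ref{thm:3} entirely to the combinatorics of the moment polytope $\De$ and then to dispose of that combinatorics by a low-dimensional classification. Recall that $L_{u_0}$ is a stem precisely when every fiber $L_u$ with $u\neq u_0$ is displaceable by a Hamiltonian isotopy, so everything rests on producing such isotopies. The key geometric input, which I would establish first, is a \emph{probe lemma}: if an interior point $u$ is displaceable by a probe, then $L_u$ is Hamiltonian displaceable. The model is the rotation of $S^2$ in Figure \ref{fig:11}, generalized via the local normal form $\Phi_0^{-1}(V)\cong D^2(a)\times T^{n-1}$ near a facet $F$. Integral transversality of the probe direction to $F$ guarantees, after an $\SL$-change of the torus coordinates, that the probe lies inside a symplectically embedded $D^2\times T^{n-1}$ in which $F$ is the disc boundary; a point less than halfway along the probe then sits in the inner half of the disc factor and can be pushed off itself by an area-preserving rotation. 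Granting this, it suffices to show that every $u\neq u_0$ is displaceable by some probe.

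Next I would prove the characterization announced in the abstract: for a monotone $\De$, \emph{every} point other than $u_0$ is displaceable by probes if and only if $\De$ satisfies the star Ewald condition. The forward-looking point is that the only obstruction to probe-displaceability at a generic interior $u$ comes from configurations of facets together with the integral symmetric points of $\De$, and the star Ewald condition is tailored exactly to remove this obstruction: when it holds, for each $u\neq u_0$ one can locate a facet $F$ and an integrally transverse direction along which $u$ lands in the near half of the resulting segment. This step converts the displaceability question into the purely lattice-theoretic statement that $\De$ — equivalently its dual Fano polytope $P$ — is star Ewald.

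The hard part, and the place where the hypothesis $\dim_{\R}M\le 6$, i.e. $n\le 3$, is genuinely used, is proving that \emph{every monotone polytope of dimension at most three is star Ewald}. Here I would invoke the finiteness of smooth Fano polytopes in low dimension — there are five in dimension two and eighteen in dimension three — and verify the star Ewald condition for the resulting short list of monotone $\De$. I expect this to be the main obstacle: one must either run a uniform check exploiting the symmetry of $P$ and the fact that $u_0$ is the \emph{unique} interior integral point, or organize the verification around the facet structure so that the required symmetric integral points are produced automatically. Since the property can fail in higher dimensions, no purely formal argument is available, and the bound $n\le 3$ reflects precisely where the case analysis remains tractable.

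Combining the three steps yields the theorem. By the classification, $\De$ is star Ewald; by the characterization, every $u\neq u_0$ is then displaceable by a probe; and by the probe lemma, each such $L_u$ is Hamiltonian displaceable. Hence no fiber other than $L_{u_0}$ is nondisplaceable, so $L_{u_0}$ is a stem.
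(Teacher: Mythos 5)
Your three-step reduction is exactly the paper's: Lemma \ref{le:probe} shows that probe-displaceability of $u$ implies Hamiltonian displaceability of $L_u$ via the local model $D^2(a)\times T^{n-1}$; Theorem \ref{thm:stEmain} establishes the equivalence between the star Ewald condition and probe-displaceability of every point of $\intt\De\less\{0\}$; and Theorem \ref{thm:3} is then quoted as an immediate consequence of Proposition \ref{prop:3}, which asserts that every $3$-dimensional monotone polytope is star Ewald. The one place you genuinely diverge is in how that last combinatorial fact is established. You propose to enumerate the smooth Fano polytopes in dimensions two and three (five and eighteen respectively) and check the condition case by case; this is legitimate and is essentially what Paffenholz's later computer verification does up to dimension five, and it is also how the paper handles the two-dimensional corollary. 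The paper, however, deliberately avoids enumeration in dimension three: it introduces the special points $s_{v,F}=v+\sum e_j$ attached to a vertex $v$ in a facet $F$, shows via the vertex-Fano condition that $s_{v,F}$ and the special point of the opposite facet are negatives of each other, and classifies the only obstruction --- a ``small'' triangular facet --- which forces $\De$ to be one of two explicit polytopes (Lemma \ref{le:31}); the star Ewald condition is then verified for vertices, facets and edges in Steps 1--3. The enumeration route buys certainty with less insight; the paper's route is chosen precisely because it ``illustrates the kind of ideas that can be used to analyze this problem'' in higher dimensions, where no finite list is available. Either way the theorem follows, so your proposal is sound, but be aware that the actual case-check you defer to is the real content and would need to be carried out for all eighteen polytopes (or replaced by an argument like the paper's) before the proof is complete.
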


This is an immediate consequence of Proposition \ref{prop:3}.

 In our approach, this question  of which toric fibers can be displaced
 is closely related to the well known Ewald conjecture in \cite{Ew} about the structure of monotone
polytopes $\De$, namely that the set 
$$
\Ss(\De) = \{v\in \De\cap \Z^n\,:\,-v\in \De\}
$$
 of symmetric integral points in $\De$ 
contains an integral basis for $\R^n$.\footnote
{
As is customary in this subject, Ewald works with the dual polytope $P$
that is constructed from the fan.  Hence his formulation looks very different from ours, but is equivalent.}
  By work of 
{\O}bro \cite[Ch.~4.1]{Ob}, this 
is now known to hold in dimensions $\le 8$.  
However, in general it is not 
even known if $\Ss(\De)$ must be nonempty.

In  Definition \ref{def:starE} we formulate a stronger, but still purely combinatorial,
 version of the Ewald property (called the {\it star Ewald} property) 
 and prove the following result.
 
 \begin{thm}\labell{thm:stEmain} 
  A monotone polytope $\De$ 
satisfies the  star Ewald condition
if and only if
 every point in $\intt\De\less\{0\}$ can be displaced by a probe. 
 \end{thm}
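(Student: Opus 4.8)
The plan is to convert ``displaceable by a probe'' into an elementary condition on a reflected point, and then match this, face by face, against the combinatorial data in Definition~\ref{def:starE}. Normalize so that $0=u_0$ is the unique interior integral point and $\De=\{x:\langle \nu_i,x\rangle\le 1\}$, where the $\nu_i$ are the primitive outward facet normals (the vertices of the dual Fano polytope). A probe enters the relative interior of a facet $F_i$ in a direction $v$ with $\langle \nu_i,v\rangle=-1$; if it passes through $u$ then the entry point is $w=u-(1-\langle\nu_i,u\rangle)\,v$ and the point obtained by reflecting $w$ in $u$ is $2u-w=u+(1-\langle\nu_i,u\rangle)\,v$. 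Unwinding the definition, $u$ is displaceable by this probe precisely when (i) $w$ lies in $\mathrm{relint}\,F_i$ and (ii) $2u-w\in\intt\De$. To organise the argument I attach to each $u\in\intt\De\less\{0\}$ the face $F_u$ of $\De$ whose relative interior is met by the ray $\R_{>0}u$.

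For the implication ``star Ewald $\Rightarrow$ displaceable'', fix $u\ne 0$ and apply Definition~\ref{def:starE} to $F_u$ to obtain $v\in\Ss(\De)$ with (after possibly replacing $v$ by $-v$) $\,-v\in\istar(F_u)$ and $v\notin\istar(F_u)$; thus $-v$ lies in the relative interior of some face containing $F_u$, and I run the probe from a facet $F_i$ through that face in direction $v$. Condition~(i) says the entry point $w$ lies in $\mathrm{relint}\,F_i$; this is where the open-star membership $-v\in\istar(F_u)$ is used, to keep $w$ off the lower faces of $F_i$. For condition~(ii) note that $2u-w$ is $u$ plus a positive multiple of $v$; since $v\in\De$ while $v\notin\istar(F_u)$, the facets that $v$ meets lie ``away from'' $F_u$, and monotonicity then forces $2u-w\in\intt\De$. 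The one quantitative point is that $u$ must fall in the \emph{first half} of the chord: because the entry facet is opposite to $F_u$ and $0$ is central, the chord runs essentially from $-v$ through a neighbourhood of $0$ out to $v$, so any $u$ on the $0$-to-$F_u$ ray sits past the entry but short of the midpoint.

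For the converse I argue by contraposition. Suppose $\De$ is not star Ewald, so some proper face $F$ admits no adapted symmetric point. I produce a witness $u_F\ne 0$ lying over $F$ (a small, generic inward displacement of a point of $\mathrm{relint}\,F$, so that its ray meets $\mathrm{relint}\,F$) and show that no probe displaces it. Given any facet $F_i$ and integrally transverse direction $v$, suppose for contradiction that (i) and (ii) hold for $u_F$. Transversality makes $v$ a primitive \emph{integral} vector, (i) places $-v$ on $F_i$, and (ii) forces the reflected point, hence $v$ itself, into $\De$; thus $v\in\Ss(\De)$. Tracking which facets $v$ and $-v$ meet, the relative-interior condition~(i) and the strict interiority~(ii) translate exactly into ``$v$ is a symmetric point adapted to $F$'', contradicting the failure of star Ewald at $F$. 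Hence $u_F$ is a point of $\intt\De\less\{0\}$ that no probe displaces.

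\textbf{Main obstacle.} The difficulty is the converse, and within it two points require care. The first is the passage from the \emph{analytic} displacing conditions to an \emph{integral} point of $\Ss(\De)$: one must check that integral transversality really yields a lattice vector and that strict interiority of the reflected point can be upgraded to the membership $\pm v\in\De$ that makes $v$ symmetric, while excluding the degenerate cases in which the entry falls on a lower face of $F_i$ or the midpoint lands exactly on $\p\De$. The second is uniformity: the witness $u_F$ must defeat \emph{every} facet and \emph{every} transverse direction at once, which requires controlling the whole star of $F$ rather than a single probe. By contrast the forward direction is comparatively routine once the adapted symmetric point is in hand, the only real content being the first-half estimate, which follows from the centrality of $0$ together with monotonicity.
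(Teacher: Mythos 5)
Your forward direction is essentially the paper's: decompose $\intt\De\less\{0\}$ into the cones $C(f,\{0\})$ over the faces hit by rays from the origin, and probe each cone in the direction of the adapted symmetric point. One slip: Definition \ref{def:starE} gives you $v\notin\Starr(F_u)$, not merely $v\notin\istar(F_u)$, and it is the stronger condition you actually need to conclude $2u-w\in\intt\De$; if $v$ lay in a codimension-two face containing $F_u$ (hence in $\Starr\less\istar$), the segment from the entry point towards $v$ would run along a facet rather than through the interior.

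The converse contains a genuine gap, in two places. First, your witness $u_F$ is a small inward perturbation of a point of $\mathrm{relint}\,F$, hence sits near $\p\De$, and for such a point the implication ``displaceable by a probe $\Rightarrow$ the direction is a symmetric lattice point'' fails. Condition (i) places $-v$ on the affine hyperplane supporting $F_i$, not in the facet $F_i$ itself, and condition (ii) gives $u_F+\ell_i(u_F)v\in\intt\De$, which says nothing about $v\in\De$ when $\ell_i(u_F)$ is small. The equivalence you invoke is exactly Lemma \ref{le:probO}, and it holds only for points in a sufficiently small neighborhood of $0$ --- which is why the paper's witnesses are taken converging to $0$, not to the face $F$. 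Second, and more seriously: even granting $v\in\Ss(\De)$, a probe displacing a point of $C(f,\{0\})$ need not produce a symmetric point \emph{adapted} to $f$; its direction can be parallel to $f$, in which case $-v$ lies in no facet containing $f$ and the star Ewald condition for $f$ is not witnessed. Ruling this out is the real content of the converse, and no single generic witness can do it. The paper must run the Fukaya--Oh--Ohta--Ono maximin construction inside the slices $f_t=\De\cap\{x_1=\cdots=x_d=-t\}$, allowing ghost facets as in Remark \ref{rmk:ghost}, to manufacture for each small $t$ a point of $C(f,\{0\})\cap f_t$ that no probe with direction parallel to $f$ can displace (the analogue of Lemma \ref{le:v0} within the slice), and must separately verify that this point lies in $C(f,\{0\})$ and tends to $0$. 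Your sentence ``tracking which facets $v$ and $-v$ meet \dots translates exactly into `$v$ is a symmetric point adapted to $F$'\,'' asserts precisely this hard step without proof.
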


 \begin{cor}\labell{cor:stE}
   If $\De$ is a monotone polytope in $\R^n$  
 for which all points except for
 ${u_0}$ are displaceable by probes, 
 then $\Ss(\De)$ spans $\R^n$. 
 \end{cor}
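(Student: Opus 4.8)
The plan is to read the corollary as a formal consequence of Theorem \ref{thm:stEmain}, using that the star Ewald condition was introduced precisely as a strengthening of the classical Ewald condition. First I would unpack the hypothesis. Since $\De$ is monotone, its special point $u_0$ is the unique interior integral point, normalized to lie at $0$; thus the assumption that all points except $u_0$ are displaceable by probes is literally the statement that every point of $\intt\De\less\{0\}$ can be displaced by a probe. This is exactly the right-hand condition appearing in Theorem \ref{thm:stEmain}, so that theorem lets me conclude that $\De$ satisfies the star Ewald condition.

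Next I would invoke the relationship between the two combinatorial properties. By design (Definition \ref{def:starE}) the star Ewald condition is stronger than the Ewald condition, and the Ewald condition asserts precisely that the symmetric set $\Ss(\De)=\{v\in\De\cap\Z^n:-v\in\De\}$ contains an integral basis of $\R^n$. Since any integral basis is in particular an $\R$-spanning set, a star Ewald polytope forces $\Ss(\De)$ to span $\R^n$, which is exactly the assertion of the corollary.

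I do not expect a genuine obstacle here: all the substance is carried by Theorem \ref{thm:stEmain}, and the remaining step is the purely definitional implication ``star Ewald $\Rightarrow$ $\Ss(\De)$ contains an integral basis.'' This is guaranteed by the stated fact that star Ewald strengthens Ewald, so the extra ``star'' requirement can simply be dropped to recover the Ewald spanning property. Granting this, the corollary follows with no further computation, and is in fact weaker than the full Ewald conclusion, since it asks only for $\R$-spanning of $\Ss(\De)$ rather than for the presence of an integral basis.
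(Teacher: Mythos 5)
Your first step is fine and matches the paper: the hypothesis is exactly the right-hand side of Theorem \ref{thm:stEmain}, so $\De$ is star Ewald. The gap is in your second step. You assert that the star Ewald condition is ``by design'' stronger than the (weak) Ewald condition, so that one ``can simply drop the star requirement'' to conclude that $\Ss(\De)$ contains a basis. But look at Definition \ref{def:starE}: the star Ewald condition for $\De$ says that for every face $f$ there exists $\la\in\Ss(\De)$ with $\la\in\istar(f)$ and $-\la\notin\Starr(f)$. Nowhere does it contain, as a clause, the statement that $\Ss(\De)$ contains an integral basis (or even spans). The phrase ``a strong version of the Ewald conjecture'' in the introduction is an informal description of the eventual implication, not a definitional containment, and the paper is explicit that ``the relationships between the strong Ewald and star Ewald conditions are not completely clear.'' So the implication you need is a theorem, not a tautology.

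That implication is precisely Lemma \ref{le:span}: if every face containing some vertex $v$ is star Ewald, then $\De$ satisfies the weak Ewald condition. Its proof is short but genuinely uses the monotone structure: one normalizes coordinates so that $v=(-1,\dots,-1)$ with the facets through $v$ given by $\{x_i=-1\}$, observes that every point of $\Ss(\De)$ then has coordinates in $\{0,\pm1\}$, and applies the star Ewald condition successively to the chain of faces $v$, $\{x_2=\dots=x_n=-1\}$, etc., to extract $n$ symmetric points forming a lattice basis. Your proof should route through this lemma (or reproduce its argument); as written, the key step is unjustified. With that substitution the corollary does follow, since an integral basis in $\Ss(\De)$ in particular spans $\R^n$.
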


 We  show in Proposition \ref{prop:3} that every 
 $3$-dimensional monotone polytope satisfies the star Ewald condition.  
The proof of this result
is fairly geometric, and does not appear to generalize easily to higher dimensions. 
Therefore, before attempting such a generalization,
 it would seem sensible to carry out a  computer check of
  the star Ewald condition
 using {\O}bro's methods.\footnote
 {
 A. Paffenholz  has recently made such a computer search, finding 
 that the first dimension in which  counterexamples occur is  $6$. }

 We then analyze the star Ewald 
 condition for monotone polytopes  that are bundles.  
 (Definitions are given in \S\ref{s:bun}.)  By Lemma \ref{le:bun}  
 the fiber and base of such a bundle must be monotone.
 Although it seems likely that the total space is star Ewald whenever the fiber and base are, we could only prove the following special case.

 \begin{prop} \labell{prop:bun} Suppose that the monotone polytope $\De$ is a  bundle over
 the $k$-simplex $\De^k$, whose fiber $\Tilde \De$ satisfies the 
 star Ewald condition.  Then $\De$ satisfies the 
  star Ewald condition.
  \end{prop}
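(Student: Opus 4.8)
The plan is to verify Definition~\ref{def:starE} for $\De$ one face at a time, organizing the proper faces of $\De$ according to how they sit over the base. Choose coordinates $\R^n=\R^{n-k}\times\R^k$ so that the bundle projection $\pi\colon\De\to\De^k$ is the projection onto the second factor, with $\pi(0)$ the center of $\De^k$; by Lemma~\ref{le:bun} both $\De^k$ and the fiber $\Tilde\De=\pi^{-1}(\pi(0))\cap(\R^{n-k}\times\{0\})$ are monotone, and the restriction of $\Z^n$ to the fiber is its defining lattice. The two elementary observations I would use are that every symmetric integral point $\tilde v\in\Ss(\Tilde\De)$ lifts to a symmetric integral point $(\tilde v,0)\in\Ss(\De)$, and that the symmetric integral points $e_1,\dots,e_k\in\Ss(\De^k)$ of the simplex (together with $-e_1,\dots,-e_k$, which also lie in $\Ss(\De^k)$ and meet the remaining facet $\{\sum_i x_i=-1\}$) can be lifted to symmetric integral points of $\De$ once one corrects the fiber component to account for the twisting.

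For the first family of faces, suppose $F$ is a proper face of $\De$ with $\pi(F)=\De^k$. Such an $F$ meets no base facet and so is the restriction of the bundle over a proper face $\Tilde F$ of the fiber. I would apply the star Ewald property of $\Tilde\De$ to $\Tilde F$ to obtain $\tilde v\in\Ss(\Tilde\De)$ lying in $\Starr(\Tilde F)$ with $-\tilde v\notin\Starr(\Tilde F)$, and then check that its lift $v=(\tilde v,0)$ witnesses the star Ewald condition for $F$. Because the fiber is taken over the center of $\De^k$, the point $v$ lies over $\pi(0)\in\intt\De^k$, so it sits in no base facet; hence the proper faces of $\De$ containing $v$ all project onto $\De^k$ and correspond exactly to the faces of $\Tilde\De$ containing $\tilde v$. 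Consequently $v\in\Starr(F)$ and $-v\notin\Starr(F)$ follow directly.

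The substantive case is when $G:=\pi(F)$ is a proper face of the simplex $\De^k$, so that $F$ lies in at least one base facet; here $G$ is the intersection of the base facets indexed by some $\emptyset\neq S\subsetneq\{0,1,\dots,k\}$. Since $\De^k$ is a simplex, the symmetric points $\pm e_i$ are available and a suitable one lies in $\Starr(G)$ while its negative does not. I would lift that chosen point to a symmetric integral point of $\De$, correcting its fiber component by a symmetric point supplied by the star Ewald property of $\Tilde\De$ so that both the lift and its negative genuinely lie in $\De$, and then verify that the resulting $v$ satisfies $v\in\Starr(F)$ and $-v\notin\Starr(F)$. Fusing the base datum coming from $\De^k$ with the fiber datum coming from $\Tilde\De$ into a single symmetric integral point that simultaneously respects every facet containing $F$ is exactly the hard part, and it is where the simplex hypothesis is essential: any proper subset of the facet normals $\{e_1,\dots,e_k,-\sum_i e_i\}$ of $\De^k$ behaves like part of a basis, which is what lets the base correction and the fiber correction be chosen compatibly and lifted into $\De$. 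For a general monotone base no such clean interaction of the symmetric points with the twisting is available, which is why the argument is confined to bundles over the simplex.
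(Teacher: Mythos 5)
There is a genuine gap, and it sits exactly where the real work of this proposition lies. For a toric bundle the polytope $\De$ does \emph{not} project onto the base polytope: as set up in \S\ref{s:bun}, the natural projection $\ft^*\to\Tilde\ft^*$ sends $\De$ onto the \emph{fiber} $\Tilde\De$, the fiber facets are the ones parallel to $\pi^*(\Hat{\ft}^*)=\R^k$, and it is the \emph{base} facets whose normals acquire twisting components in $\Tilde\ft$. Consequently your two lifting claims are reversed. A symmetric point of the base lifts to a symmetric point of $\De$ with zero fiber component and \emph{no} correction, because the central slice $\Hat\De_0=\De\cap(\R^k\times\{0\})$ is an honest monotone copy of the base (Lemma \ref{le:bun}); that is the easy half. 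By contrast, for $w\in \Ss(\Tilde\De)$ the point with fiber component $w$ and base component $0$ need \emph{not} lie in $\De$: the base facet inequalities take the form $\langle\Hat\eta_i,x\rangle+\langle c_i,w\rangle\le 1$, and the twisting terms $\langle c_i,w\rangle$ can exceed $1$. Concretely, the monotone $\De_1$-bundle over $\De^2$ cut out by $-1\le y\le 1$, $x_1,x_2\ge -1$, $x_1+x_2\le 1+2y$ has $(0,0,-1)\notin\De$, so $(0,0,1)\notin\Ss(\De)$; the only symmetric lifts of $w=1$ are points such as $(1,1,1)$. This breaks your first family of faces (the intersections of fiber facets, which are precisely the faces lying in no base facet), where you assert that $(\tilde v,0)$ is automatically in $\Ss(\De)$ and witnesses the star Ewald condition. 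In your second family you explicitly set aside the construction of the corrected lift as \lq\lq the hard part" without carrying it out --- but producing that correction \emph{is} the content of the proposition, and it is also needed (contrary to your claim) in the first family.

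For comparison, the paper fills exactly this hole in two steps. Given $w\in\Ss(\Tilde\De)$ adapted to a face of the fiber, Lemma \ref{le:bun}(iv) shows that $\Hat\De_{w\R}=\De\cap\bigl(\R^k\times\{cw:c\in[-1,1]\}\bigr)$ is a monotone $\De_1$-bundle over the base, and Proposition \ref{prop:bun1} reduces the whole problem to finding a symmetric point of that $\De_1$-bundle in its fiber facet $\Hat\De_w$. For base a simplex this is then done by hand in Corollary \ref{cor:bun}: after normalizing to $-1\le y\le 1$, $x_i\ge -1$, $\sum_i x_i\le 1+\al y$, the point with $y=1$ and exactly $\al$ of the coordinates $x_i$ equal to $1$ lies in $\Ss(\De)$. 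That explicit choice of base correction --- the step that genuinely uses the simplex hypothesis --- is what is missing from your proposal.
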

  
   Using this, we   show that $L_{u_0}$ is a stem in various other cases, in particular for the $8$-dimensional monotone manifold found by Ostrover--Tyomkin \cite{OT}
that does not have semisimple quantum homology.  

During the course of the proof we show that if the polytope 
 $\Hat \De$  is  star Ewald, then the total space of every bundle over  $\Hat \De$ with
 star Ewald fiber is itself
 star Ewald if and only if this is true for
bundles with fiber the one-simplex $\De_1$; see Proposition \ref{prop:bun1}.

Finally, we discuss the notion of stable displaceability 
in \S\ref{ss:stdis}. This notion was introduced by
Entov--Polterovich in \cite{EPrig} as an attempt to generalize the notion of displaceability.  However, we show in
Proposition \ref{prop:stdis} that in many cases stably displaceable fibers are actually displaceable by probes.

Our arguments rely on the Fukaya--Oh--Ohta--Ono
notion of the {\it central  point} $v_0$.  We explain this in 
 \S \ref{ss:v0}, and then in 
Lemma \ref{le:v0} give a direct combinatorial proof of the following fact.

\begin{prop}\labell{prop:uF} For
 every rational polytope the point  ${v_0}$ is not displaceable by probes.
\end{prop}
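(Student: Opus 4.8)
The plan is to reduce the statement to a short affine-geometric computation resting on the combinatorial description of the central point given in \S\ref{ss:v0}. Write $\De=\{x\in\R^n:\langle \eta_i,x\rangle\ge c_i\}$, where the $\eta_i$ are the primitive integral inward conormals of the facets $F_i$, and set $\ell_i(x)=\langle \eta_i,x\rangle-c_i\ge 0$ for the lattice affine distance of $x$ to $F_i$. Put $m=\min_i\ell_i(v_0)>0$ and let $I=\{i:\ell_i(v_0)=m\}$ index the facets closest to $v_0$. The one conceptual input, which is precisely what the construction of $v_0$ supplies, is that $0$ lies in the interior of the convex hull of $\{\eta_i:i\in I\}$; equivalently, these conormals positively span $\R^n$.

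Next I would fix an arbitrary probe through $v_0$: it is launched from a facet $F$ in an integrally transverse direction $w$, so $w\in\Z^n$ and $\langle \eta_F,w\rangle=1$. Parametrizing it as $u(t)=u(0)+tw$ with $u(0)\in F$, one has $v_0=u(t^*)$ for $t^*=\ell_F(v_0)$, since $\tfrac{d}{dt}\ell_F(u(t))=\langle \eta_F,w\rangle=1$ and $\ell_F(u(0))=0$. The first step is to use the spanning property to locate a closest facet toward which the probe descends: writing $-w=\sum_{i\in I}b_i\eta_i$ with $b_i\ge 0$ not all zero gives $-|w|^2=\sum_{i\in I}b_i\langle \eta_i,w\rangle<0$, so some $i'\in I$ satisfies $s:=\langle \eta_{i'},w\rangle<0$.

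Along the probe $\ell_{i'}(u(t))=m+(t-t^*)s$, which decreases for $t>t^*$ and vanishes at $t=t^*+m/|s|$; since the probe stays in $\De$, its exit parameter obeys $t_{\max}\le t^*+m/|s|$. Here integrality enters decisively: $w$ is integral and $\eta_{i'}$ is primitive integral, so $s$ is a nonzero integer, $|s|\ge 1$, and hence $t_{\max}\le t^*+m$. Since $m=\min_i\ell_i(v_0)\le\ell_F(v_0)=t^*$, this yields $t_{\max}\le 2t^*$, so $v_0=u(t^*)$ lies at or beyond the midpoint of the probe and is therefore not displaceable by it. As the probe was arbitrary, the proposition follows.

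The main obstacle is not this computation but pinning down and justifying the spanning characterization: the entire argument rests on the assertion that the inward conormals of the facets nearest $v_0$ have $0$ in the interior of their convex hull, which is the combinatorial residue of the Fukaya--Oh--Ohta--Ono critical-point condition and must be extracted from \S\ref{ss:v0}. The only other delicate point is recognizing that it is exactly the integral transversality of $w$, forcing $|s|\ge 1$, that upgrades the bound $t_{\max}\le t^*+m/|s|$ to the sharp threshold $t_{\max}\le 2t^*$; without integrality the ``halfway'' conclusion could fail.
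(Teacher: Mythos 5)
Your reduction of the problem to the bound $t_{\max}\le 2t^*$ is sound, and the way integral transversality enters (forcing $|s|\ge 1$) is exactly the right mechanism; the paper's proof of Lemma \ref{le:v0} runs essentially the same computation in contrapositive form via Lemma \ref{le:affd}. But the ``one conceptual input'' on which everything rests is false as stated. The construction of $v_0$ in \S\ref{ss:v0} does \emph{not} give that the conormals of the facets nearest $v_0$ positively span $\R^n$. Take $\De=[0,b]\times[0,a]$ with $a<b$: then $v_0=(b/2,a/2)$, the minimal distance $m=a/2$ is achieved only on the two long facets, and their inward conormals $\pm(0,1)$ positively span only a line. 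For the horizontal probe from a short facet your decomposition $-w=\sum_{i\in I}b_i\eta_i$ with $b_i\ge 0$ simply does not exist, so no descending facet $i'$ is produced and the argument stops. What the construction actually yields is the filtration $\De=P_0\supset P_1\supset\dots\supset P_K=\{v_0\}$ with levels $S_1<\dots<S_K$, together with property (*): at each step the conormals indexed by $I_{k}$ positively span only the quotient $V(P_{k-1})/V(P_{k})$, not all of $\R^n$. Only the union $\bigcup_k I_k$ positively spans $\R^n$, and its members sit at the varying distances $S_k>m$, so your closing inequality $m\le\ell_F(v_0)$ no longer finishes the argument by itself.

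The proof can be repaired, but only by engaging with the full filtration, which is precisely what the paper's proof does. Let $k$ be minimal such that $\langle\eta_i,w\rangle\ne 0$ for some $i\in I_k$ (such $k$ exists because the union of the $I_k$ spans $\R^n$). For $j<k$ the direction $w$ is annihilated by every $\eta_i$ with $i\in I_j$, hence by (*) lies in $V(P_j)$; applying (*) once more at level $k$ then produces $i'\in I_k$ with $s=\langle\eta_{i'},w\rangle\le -1$. Now $\ell_{i'}(v_0)=S_k$ replaces your $m$, and the needed inequality $S_k\le\ell_F(v_0)=t^*$ holds because $\langle\eta_F,w\rangle=1\ne 0$ forces, by minimality of $k$ and \eqref{eq:Sk}, that $F$ belongs to some $I_j$ with $j\ge k$ or to no $I_j$ at all, so that $\ell_F(v_0)\ge S_k$. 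With these substitutions your computation gives $t_{\max}\le t^*+S_k\le 2t^*$ and the proposition follows. As written, however, the proposal establishes the statement only when $P_1$ is already a point (e.g.\ the monotone case), not for an arbitrary rational polytope.
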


In some cases, it is easy to check  that probes 
displace all points $u\in \intt \De $ for which $HF^*(L_u,\chi)=0$.
For example, the results of 
Fukaya--Oh--Ohta--Ono concerning  one point blow ups of $\C P^2$ and  $\C P^1\times \C P^1$
become very clear from this perspective: see Example \ref{ex:2blow} and 
Figure \ref{fig:probe0}. 
However,  by Lemma \ref{le:inacc}
this is no longer true  for general Hirzebruch surfaces.
Further,
the Floer-theoretic nondisplaceable  set 
$$
\ND_{HF}: = \{u\in \intt\De:HF^*(L_u,\chi)\ne0\}
$$
has dimension at most $n-1$, while we prove the following result 
in \S\ref{ss:2}.

\begin{prop}\labell{prop:notdisp} There are
$2$-dimensional
 moment polytopes with a nonempty open set consisting of fibers that are not displaceable by probes.
\end{prop}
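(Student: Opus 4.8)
The plan is to exhibit an explicit smooth two-dimensional moment polytope $\De$ together with a nonempty open region $U\subset\intt\De$ all of whose points are non-displaceable by probes, and then to verify the non-displaceability by a direct finite case analysis. The first step is to record a convenient reformulation of the displaceability condition. Let $F$ be a facet of $\De$ with primitive inward normal $\nu$, so that $F\subset\{\langle\nu,x\rangle=c\}$ and $\intt\De\subset\{\langle\nu,x\rangle>c\}$. A lattice vector $\la$ is integrally transverse to $F$ and points inwards exactly when $\langle\nu,\la\rangle=1$, and such $\la$ form the coset $\la_0+\Z e_F$, where $e_F$ is the primitive edge direction. Writing $h=\langle\nu,u\rangle-c$ for the height of $u$ above $F$, the height increases at unit rate along any such $\la$, so the probe from $F$ through $u$ enters $F$ at $u-h\la$ and has its midpoint at height $H/2$, where $H$ is the height at which the forward ray from $u$ leaves $\De$. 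Hence $u$ is displaceable from $F$ by this probe precisely when $u-h\la\in\intt F$ and $H>2h$, and $u$ is non-displaceable by probes exactly when, for every facet $F$ and every admissible $\la$, the forward exit height satisfies $H\le 2h$.

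For the construction I would choose the normals $\nu$ of $\De$ so that several of them have a coordinate of absolute value at least $2$. This is the key device. Recall that for a Hirzebruch trapezoid the two slanted and straight side edges both admit the horizontal transverse direction, so that horizontal probes sweep every chord from both ends and leave only a single chord (a one-dimensional locus) non-displaceable. Forcing a coordinate of $\nu$ to be large removes these flat sweeping directions, since the condition $\langle\nu,\la\rangle=1$ then compels $\la$ to be comparatively steep. With the flat directions gone, the near-halves emanating from the various facets no longer cover $\intt\De$.

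To make this quantitative, for each facet $F$ I would compute the region displaceable from $F$ as the union over admissible $\la$ of the near-halves $\{h<H/2\}$. A steeper choice of $\la$ sends the forward ray more nearly along $F$, so it exits through an adjacent facet at a \emph{smaller} height $H$; thus only finitely many of the directions $\la_0+\Z e_F$ can ever produce the inequality $H>2h$ for points of a fixed compact part of $\De$. Consequently the region displaceable from $F$ is a finite union of polygonal pieces, and one verifies that the union of these pieces over all facets omits a nonempty open set $U$.

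The final step is to confirm directly that $U$ lies in this complement: for each of the finitely many facets $F$ and each of the finitely many flat (hence potentially dangerous) transverse directions $\la$, one checks that on $U$ either the backward ray leaves $\intt F$ or the exit height obeys $H\le 2h$, with strict inequalities on a neighbourhood so that the conclusion persists on an open set. The main obstacle is exactly this bookkeeping over the infinite families $\la_0+\Z e_F$: one must argue cleanly that, apart from finitely many $\la$ per facet, the forward ray always exits below height $2h$ and so can never displace a point of $U$, and then rule out the finitely many surviving probes. Designing the polytope so that this finite check leaves a genuinely two-dimensional gap, rather than the lower-dimensional locus one gets in the Hirzebruch case, is the delicate part of the argument.
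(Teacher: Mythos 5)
Your reduction of displaceability to the two conditions $u-h\la\in\intt F$ and $H>2h$, with $\la$ ranging over the coset $\la_0+\Z e_F$ of integrally transverse inward directions, is correct, and you have identified the right mechanism: an open gap can appear because probes exit through facets that are \emph{not} integrally transverse to their direction, so the exit height $H$ falls short of the maximal value $\ell_{F'}(u)$ and the halfway condition fails on a large set. This is exactly the explanation the paper gives after its proof. However, your proposal does not actually prove the proposition: the entire existential content --- that some smooth polytope really does leave a two-dimensional gap once the finite check is done --- is deferred in your final sentence to ``the delicate part of the argument.'' It is not automatic that steep normals produce an open gap rather than the one-dimensional residue one sees for trapezoids (Lemma \ref{le:inacc} only yields isolated points or segments), so without an explicit polytope and the accompanying verification nothing beyond the (correct) framework has been established.

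The paper closes this gap with a two-step construction that is absent from your plan. First it works with the \emph{non-smooth} triangle with vertices $A=(0,5)$, $B=(0,0)$, $C=(3,0)$: there the legs admit essentially only the probe directions $\pm(1,-1)$ together with the coordinate directions, while the hypotenuse $AC$ (normal $(5,3)$) contributes only the directions $(1,-2)$ and $(-2,3)$ usefully, so a short finite check exhibits an open region missed by all probes. Second, it smooths the bad vertices $A$ and $C$ by arbitrarily small corner cuts, observing that each new short facet admits only finitely many useful probe directions and therefore the newly accessible region has arbitrarily small area; hence the open gap survives in the smooth polytope. This perturbation step is the key device that converts an easily checkable non-smooth example into a smooth one. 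Your plan, which insists on designing a smooth polytope directly and verifying coverage facet by facet, would require substantially more bookkeeping, and as written it asserts rather than proves that the union of the displaceable near-halves fails to cover $\intt\De$.
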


It is not at all clear whether these fibers  really are nondisplaceable, or
 whether one just needs to find more elaborate 
 ways of displacing them.
\MS

\NI {\bf Varying the symplectic form}

In general, the set of nondisplaceable fibers varies as one varies the toric symplectic form.  In terms of the moment polytope this amounts to
changing the support constants $\ka_i$ of the facets $\{x:\langle x,\eta_i\rangle \le \ka_i\}$  without changing the normal vectors $\eta_i$.
For each $\De$ we denote by $\De(\ka)$ the polytope with support constants $\ka$ and normals equal to those of $\De$, and 
 by $\Cc_\De$  the set of $\ka = (\ka_i)$ for which 
 $\De(\ka)$ is analogous to $\De$, i.e. is such that a set of facets has nonempty intersection in $\De(\ka)$ if and only if it does in $\De$.
Let us say that $\De(\ka)$ is {\it accessible} if all its points except for $v_0$ are displaceable by probes.  Then we may ask:

\begin{itemize} \item For which $\De$ is there some 
$\ka\in \Cc_\De$ such that 
$\De(\ka)$ is accessible?
\item For which $\De$ is $\De(\ka)$ accessible for all $\ka\in \Cc_\De$?
\end{itemize}

If $\De$ is a product of simplices, it is obvious that
$\De(\ka)$ is always accessible.  However in dimension $2$ some 
trapezoids (the even ones) are also accessible for all $\ka$; see 
Corollary~\ref{cor:inacc}. It is not clear what happens in higher dimensions.
\MS

\NI {\bf Acknowledgements.}  This paper grew out of an attempt with
Leonid Polterovich and Misha Entov to understand the displaceability of fibers of $2$-dimensional monotone polytopes, and  
I wish to thank them for useful discussions.  I also am very grateful to
Benjamin  Nill for his many penetrating comments on earlier drafts of this note and, in particular, for sharpening the original version of Lemma \ref{le:span}.
 Discussions with Fukaya, Ohta and Ono and with 
 Chris Woodward helped to clarify some of 
 the examples in \S \ref{ss:2}.
 Finally,  I would like to thank the referee for
reading the manuscript so carefully and pointing out 
many small inaccuracies.

\section{The method of probes}\labell{s:probe}

\subsection{Basic notions}\labell{ss:pr}

A line is called rational if its direction vector is rational.
The {\it affine distance} $d_{\aff}(x,y)$ between two  points $x,y$ on a rational line $L$
 is the ratio of their Euclidean distance $d_{E}(x,y)$  to the  minimum
  Euclidean distance from $0$  to an integral point on the line through $0$ 
  parallel to $L$.
  Equivalently, if $\phi$ is any integral affine transformation of $\R^n$  that takes $x,y$ to the $x_1$ axis, then 
$d_{\aff}(x,y)=d_{E}(\phi x, \phi y)$.  
  
An affine hyperplane $A$ is called {\it rational} if it has a primitive integral normal vector $\eta$, i.e. if it is given by an equation of the form  $\langle x,\eta\rangle =\ka$ where $\ka\in \R$ and $\eta$ is 
primitive and integral.
The {\it affine distance } $d_{\la}(x,A)$ from a point $x$ to a (rational) affine  hyperplane $A$ in the (rational) direction $\la$ is defined to be 
\begin{equation}\labell{eq:aff}
d_\la(x,A): = d_{\aff}(x,y)
\end{equation}
 where 
$y\in A$ lies on the ray $x+a\la, a\in \R^+$. (If this ray does not meet  $A$, 
we set $d_\la(x,A)=\infty$.)
  If the direction $\la$ is not specified, we take it to be $\eta$.
We shall say that an integral vector $\la$ is 
{\it integrally transverse to} $A$ if it  can be completed to an integral basis  by vectors parallel to $A$.    Equivalently, we need 
$|\langle \la,\eta\rangle|=1$ where $\eta$ is the normal as above.  

The next lemma shows that  the affine distance of $x$ from $A$ is {\it maximal} along these affine transverse directions. If $A = \{\langle x,\eta\rangle =\ka\}$,
we define
$\ell_A(x): = \ka -\langle x,\eta\rangle$.

\begin{lemma} \labell{le:affd} Let $A$ be the hyperplane $\ell_A(x): = \ka -\langle x,\eta\rangle =0$, where $\eta$ is a primitive integral vector.  Then 
for any rational points $u\notin A$ and $y\in A$
$$
d_{\aff}(u,y)\le |\ell_A(u)|,
$$
with equality if and only if the primitive integral  vector in the direction
  $y-u$ is integrally transverse to $F$.
\end{lemma}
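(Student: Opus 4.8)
The plan is to reduce everything to the definition of affine distance along a single line together with a one-line integrality estimate. First I would set $v=y-u$ and let $w$ be the primitive integral vector pointing in the direction of $v$, so that $v=tw$ for a unique $t>0$. Such a $w$ exists because $u$ and $y$ are rational, hence the line through them is rational. By the very definition of $d_{\aff}$---the Euclidean length of $v$ divided by the length of the shortest integral vector in its direction, which is exactly $w$---one reads off immediately that $d_{\aff}(u,y)=t$.

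The second step rewrites $\ell_A(u)$ in these terms. Since $y\in A$ we have $\langle y,\eta\rangle=\ka$, so
\[
\ell_A(u)=\ka-\langle u,\eta\rangle=\langle y-u,\eta\rangle=t\,\langle w,\eta\rangle .
\]
Thus $|\ell_A(u)|=t\,|\langle w,\eta\rangle|$, and the whole lemma has been converted into a statement about the single integer $\langle w,\eta\rangle$.

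The key point---and really the only substantive one---is the integrality estimate. Because $u\notin A$, the direction $w$ cannot be parallel to $A$, so $\langle w,\eta\rangle\neq 0$; since $w$ and $\eta$ are both integral, $\langle w,\eta\rangle$ is a nonzero integer and therefore $|\langle w,\eta\rangle|\ge 1$. Combining this with the two previous steps gives
\[
d_{\aff}(u,y)=t\le t\,|\langle w,\eta\rangle|=|\ell_A(u)|,
\]
with equality precisely when $|\langle w,\eta\rangle|=1$. By the characterization of integral transversality recalled just before the lemma (a vector $\la$ is integrally transverse to $A$ if and only if $|\langle\la,\eta\rangle|=1$), this equality condition is exactly the assertion that $w$, the primitive integral vector in the direction $y-u$, is integrally transverse to $A$, as claimed.

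I do not expect any genuine obstacle: the content lies entirely in unwinding the definition of $d_{\aff}$ and in the observation that a nonzero integer has absolute value at least one. The only two places demanding care are confirming that the primitive integral direction vector $w$ exists (this uses rationality of $u$ and $y$) and verifying that $\langle w,\eta\rangle\neq 0$ (this is where the hypothesis $u\notin A$ enters), so that the comparison with $|\langle w,\eta\rangle|$ is meaningful.
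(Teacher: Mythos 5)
Your proof is correct and amounts to the same argument the paper gives: the paper simply says the lemma is obvious after an integral affine change of coordinates putting $A=\{x_1=0\}$, and your invariant computation via $\ell_A(u)=t\langle w,\eta\rangle$ with $|\langle w,\eta\rangle|\ge 1$ is exactly what that coordinate reduction unwinds to. (The "$F$" in the equality condition of the statement is a typo for $A$, which you have correctly interpreted.)
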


\begin{proof}  This is obvious if one chooses coordinates so that 
$A=\{x_1=0\}.$
\end{proof}

A (convex, bounded) polytope $\De\subset \R^n$  is called  {\it rational} if each of its facets $F_i, i=1,\dots,N,$ is rational.
Thus there are  primitive integral vectors $\eta_i$ (the outward normals) and constants $\ka_i\in \R$ so that
\begin{equation}\labell{eq:De}
\De = \bigl\{x\in \R^n\,|\, \langle\eta_i,x\rangle \le \ka_i, i=1,\dots,N\bigr\}.
\end{equation}
We denote by 
\begin{equation}\labell{eq:li}
  \ell_i:\De\to \R,\quad x\mapsto \ka_i-\langle\eta_i,x\rangle
\end{equation}
  the affine distance from $x\in \De$ to the facet $F_i$. Further, 
$\De$ is {\it simple} if exactly $n$ facets meet at each vertex, and is 
  {\it integral} if its set $\Vv(\De)$ of vertices are integral.  (Integral polytopes are also known as {\it lattice} polytopes.)
 A simple, rational polytope is  
  {\it smooth} if for each vertex $v\in\Vv(\De)$ the normals $\eta_i, i\in I_v,$ of the facets meeting at $v$ form 
  a basis for the integral lattice $\Z^n$. This is equivalent to requiring that for each vertex $v$  the $n$ primitive integral vectors  $e_i(v)$ pointing along the edges from $v$ form a lattice basis.  

Delzant proved the following foundational theorem in \cite{Delz}.

\begin{thm} \labell{thm:delz} There is a bijective correspondence between smooth polytopes
in $\R^n$
(up to integral affine equivalence)  and toric symplectic $2n$-manifolds 
(up to equivariant symplectomorphism).
\end{thm}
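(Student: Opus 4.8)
The plan is to establish the two directions of the correspondence and then verify that they are mutually inverse. First I would treat the passage from a manifold to a polytope. Given a toric symplectic $(M,\om)$ with moment map $\Phi:M\to\R^n$, the Atiyah--Guillemin--Sternberg convexity theorem shows that $\Phi(M)$ is a convex polytope $\De$ whose vertices are the images of the fixed points of the $T^n$-action. The equivariant Darboux normal form recalled in \S\ref{s:intro}, in which every fixed point has a neighborhood equivariantly symplectomorphic to a standard chart $\Phi_0^{-1}(V)\subset\C^n$, shows that near each vertex $v$ exactly $n$ facets meet and that their edge directions $e_i(v)$ form a $\Z^n$-basis; hence $\De$ is smooth in the sense defined above. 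The moment map $\Phi$ is canonical only up to an additive constant and up to the identification $T^n\cong(\R/\Z)^n$, the latter altering $\Phi$ by an element of $GL(n,\Z)$, so the class of $\De$ is well defined up to integral affine equivalence. Since equivariantly symplectomorphic manifolds manifestly yield affinely equivalent polytopes, this defines a map from isomorphism classes of toric manifolds to affine-equivalence classes of smooth polytopes.

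Second, for the reverse direction I would carry out the Delzant reduction. Given a smooth $\De$ with facet data $(\eta_i,\ka_i)_{i=1}^N$ as in \eqref{eq:De}, define the surjection $\beta:\R^N\to\R^n$ by $\beta(f_i)=\eta_i$ on the standard basis $f_1,\dots,f_N$. Smoothness forces $\beta(\Z^N)=\Z^n$, since at any vertex the $n$ normals meeting there already form a lattice basis; consequently $\beta$ descends to a torus homomorphism $T^N\to T^n$ whose kernel $K$ is a subtorus of dimension $N-n$ (connected, as $\beta$ is onto at the lattice level). Equipping $\C^N$ with its standard form and the $T^N$-moment map $\mu(z)=(|z_1|^2,\dots,|z_N|^2)$, consistent with \eqref{eq:Rn}, the $K$-moment map is $\mu_K=\iota^*\circ\mu$ where $\iota:\mathrm{Lie}(K)\hookrightarrow\R^N$. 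I would reduce at the level $c=\iota^*\ka$ with $\ka=(\ka_1,\dots,\ka_N)$. The key point, which uses smoothness of $\De$, is that $c$ is a regular value on which $K$ acts freely; then $M_\De:=\mu_K^{-1}(c)/K$ is a compact symplectic $2n$-manifold with a residual Hamiltonian $T^n=T^N/K$ action. Dualizing $0\to\mathrm{Lie}(K)\to\R^N\xrightarrow{\beta}\R^n\to0$ identifies the reduced moment image with $\{x:\ell_i(x)\ge0\ \text{for all }i\}=\De$, the signs being fixed by $\beta(f_i)=\eta_i$, so the construction recovers $\De$ exactly.

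Finally I would check that the two compositions are the identity. The computation just described shows that applying the manifold-to-polytope map to $M_\De$ returns $\De$; this simultaneously proves surjectivity of the polytope map and injectivity of the construction on affine-equivalence classes. The remaining, and hardest, step is uniqueness: every toric $(M,\om)$ with moment polytope $\De$ is equivariantly symplectomorphic to $M_\De$. Over $\intt\De$ the fibers $L_u$ are Lagrangian tori, and action-angle coordinates make $\Phi^{-1}(\intt\De)$ equivariantly symplectomorphic to $\intt\De\times T^n$ with its standard form, so $M$ and $M_\De$ agree there. The difficulty is extending this identification across the boundary strata: near a fiber over the relative interior of a face the equivariant normal form of Marle--Guillemin--Sternberg pins down the symplectic germ in terms of transverse weights, which smoothness makes standard and identical for $M$ and $M_\De$. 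I expect the main obstacle to be assembling these local models into a single global equivariant symplectomorphism by a Moser-type patching over the face stratification of $\De$, which is where the genuine content of Delzant's theorem resides.
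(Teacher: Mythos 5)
First, a point of comparison: the paper does not prove this statement at all --- it is Delzant's foundational theorem, quoted with a citation to \cite{Delz} --- so the only thing to measure your proposal against is Delzant's original argument, and your outline does follow exactly that route: Atiyah--Guillemin--Sternberg convexity plus the equivariant local normal form for the manifold-to-polytope direction, and symplectic reduction of $\C^N$ by the kernel $K$ of the torus homomorphism induced by $\beta(f_i)=\eta_i$ for the converse. These parts are set up correctly. The one assertion you leave unproved in the reduction step --- that the level is regular and that $K$ acts freely on it --- is precisely where smoothness of $\De$ enters (one needs that for every face, not only every vertex, the normals of the facets containing it extend to a $\Z^n$-basis; this follows from the vertex condition since $\De$ is simple), and it should be checked, but it is routine.

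The genuine gap is the uniqueness step, and you flag it yourself: ``Moser-type patching over the face stratification'' is a hope, not an argument, and it is not what Delzant does. The difficulty is that the identification of $M$ with $M_\De$ must be equivariant and commute with the moment maps, so one cannot freely interpolate between forms as in an ordinary Moser argument. Delzant's mechanism is sheaf-theoretic: cover $\De$ by open sets $U_i$ over which the local normal form gives equivariant symplectomorphisms $\phi_i:\Phi^{-1}(U_i)\to\Phi_\De^{-1}(U_i)$ commuting with the moment maps; on overlaps, $\phi_j^{-1}\phi_i$ covers the identity of $U_i\cap U_j$, and any such automorphism is a fiberwise torus translation generated by a function on the base (in action-angle coordinates, $(u,\theta)\mapsto(u,\theta+\la(u))$ is symplectic iff $\sum\la_k\,du_k$ is closed, i.e.\ $\la=\nabla h$ locally). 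The discrepancies therefore form a \v{C}ech $1$-cocycle with values in the sheaf of smooth functions on $\De$ modulo integral affine functions, and the relevant cohomology vanishes because $\De$ is convex (hence contractible) and smooth functions admit partitions of unity; this vanishing is what lets one correct the $\phi_i$ so that they glue into a global equivariant symplectomorphism. Without this step, or a genuine substitute for it, you have constructed a map from polytopes to manifolds and shown it hits every manifold's polytope, but not the claimed bijection: two non-isomorphic toric structures could a priori share the same moment polytope. Your action-angle observation handles only $\Phi^{-1}(\intt\De)$, where nothing interesting happens; the entire content is the extension over the singular fibers and the global gluing.
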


\begin{defn}   Let $w$ be a point of some facet $F$ of a rational polytope $\De$ and $\la\in \Z^n$
be  integrally transverse to $F$.  
The {\bf probe} $p_{F,\la}(w) = p_\la(w)$ with direction $\la\in \Z^n$ and initial point $w\in F$ is the half open line segment consisting of $w$ together with the 
points in $\intt\De$ that lie on the ray from $w$ in direction $\la$.
\end{defn}


In the next lemma we can use any notion of length along a line, though 
 the affine distance is the most natural.  

\begin{lemma}\labell{le:probe}  Let $\De$ be a smooth moment polytope.
Suppose that a point $u\in \intt\De$ lies on the probe $p_{F,\la}(w)$.  Then if  $w$ lies in the interior of $F$ and $u$ is less than halfway along $p_{F,\la}(w)$, the fiber $L_u$ is displaceable.
\end{lemma}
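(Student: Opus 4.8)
The plan is to reduce the statement to the elementary two-dimensional fact that a circle enclosing less than half the area of a disc can be displaced within that disc, and then to promote this displacement to all of $M$ by a cutoff argument. First I would choose convenient coordinates. Since $\la$ is integrally transverse to $F$, it can be completed to an integral basis by vectors parallel to $F$; equivalently $|\langle\la,\eta\rangle|=1$, where $\eta$ is the primitive outward normal of $F$. Applying the integral affine transformation carrying this basis to the standard one, and translating, I may assume that $\la=e_1$, that $F$ lies in $\{x_1=0\}$ with $\De\subset\{x_1\ge 0\}$ near $F$, and that $w=(0,w_2,\dots,w_n)$ with $(w_2,\dots,w_n)$ interior to $F$. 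By Delzant's theorem (Theorem \ref{thm:delz}) this change of coordinates is realized by an equivariant symplectomorphism, so it costs nothing. Write $u=(b,w_2,\dots,w_n)$, where $b=d_{\aff}(w,u)>0$, and let $L=d_{\aff}(w,w')$ be the affine length of the probe, $w'$ its exit point; the hypothesis that $u$ lies less than halfway along $p_{F,\la}(w)$ means exactly that $2b<L$.

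Next I would exhibit $L_u$ inside a standard local model. Choose $L'$ with $2b<L'<L$ and a small $\eps>0$, and let $N$ be the open box
\[
N=\{\,x : 0\le x_1<L',\ |x_j-w_j|<\eps \text{ for } j\ge 2\,\}.
\]
Because $w$ is interior to $F$ and the open segment from $w$ to the point $x_1=L'$ lies in $\intt\De$, for $\eps$ small enough $N$ meets $\partial\De$ only along $F$. Hence the only facet in play is $F$, and the local normal form for toric manifolds identifies $\Phi^{-1}(N)$ equivariantly symplectically with $\Phi_0^{-1}(N)\subset\C^n$, that is, with $D^2(L')\times A_2\times\dots\times A_n$, where $D^2(L')=\{|z_1|^2<L'\}$ is the disc of $\om$-area $L'$ (the facet $F$ collapsing its centre $z_1=0$) and each $A_j=\{\,|z_j|^2\in(w_j-\eps,w_j+\eps)\,\}$ is an annulus. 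Under this identification $L_u$ becomes $C_b\times T^{n-1}$, where $C_b=\{|z_1|^2=b\}$ is the circle in $D^2(L')$ enclosing $\om$-area $b$ and $T^{n-1}$ is the product of the core circles $|z_j|=\sqrt{w_j}$.

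It then suffices to displace $C_b$ inside $D^2(L')$. Since $b<L'/2$, the subdisc bounded by $C_b$ has area strictly less than that of its complement in $D^2(L')$, so there is a Hamiltonian isotopy $\psi_t$ of $D^2(L')$, compactly supported in its interior, with $\psi_1(C_b)\cap C_b=\emptyset$. The product isotopy $\psi_t\times\id_{T^{n-1}}$ is then compactly supported in $\Phi^{-1}(N)$ and satisfies
\[
(\psi_1\times\id)(C_b\times T^{n-1})\cap(C_b\times T^{n-1})=\bigl(\psi_1(C_b)\cap C_b\bigr)\times T^{n-1}=\emptyset.
\]
Extending the generating Hamiltonian by zero outside $\Phi^{-1}(N)$ produces a Hamiltonian isotopy of $M$ displacing $L_u$.

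The one genuinely geometric input is the two-dimensional displacement fact, and the point to get right is that the disc isotopy can be taken compactly supported in the \emph{open} disc $D^2(L')$; it is precisely here that the condition $b<L'/2$ (hence $2b<L$) enters, and it is what licenses the extension by zero. The auxiliary choice $L'<L$, keeping strictly inside the probe, is what guarantees that $N$ avoids every facet other than $F$, so that the clean product model $D^2(L')\times A_2\times\dots\times A_n$ applies.
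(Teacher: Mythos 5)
Your proof is correct and follows essentially the same route as the paper's: both use the toric local normal form near the interior point $w$ of $F$ to exhibit $L_u$ as a circle of area $b$ times $T^{n-1}$ inside a disc of area (essentially) the probe length times $T^{n-1}$, displace the circle by a compactly supported area-preserving isotopy using $b<a/2$, and extend by zero to a Hamiltonian isotopy of $M$. Your thickening of the probe to an open box and the auxiliary choice $L'<L$ just make explicit the extension step that the paper states more briefly.
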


\begin{proof}  Let $\Phi:M\to\De$ be the moment map of the toric manifold corresponding to $\De$, and consider $\Phi^{-1}(p)$ where $p:=p_{F,\la}(w)$.
We may choose coordinates on $\R^n\supset \De$
 so that $F = \{x_1=0\}$ and $\la = (1,0,\dots,0)$.  
 Formula \eqref{eq:Rn} implies that there is a corresponding Darboux 
chart   on $M$  with coordinates $z_1,\dots,z_n$ such that
$$
\Phi^{-1}(p) = \bigl\{z\,:\, |z_1|\le a, |z_i|= \mbox{ const}\bigr\},
$$
where  $a$ is the affine length of the probe  $p$.
 Hence
there is a diffeomorphism from $\Phi^{-1}(p)$  to $D^2(a)\times T^{n-1}$ that takes the restriction of the symplectic form to $\pr^*(dx\wedge dy)$ where $\pr:
\R^2\times T^{n-1}\to \R^2$ is the projection and $D^2(a)$ is the disc with center $0$ and area $a$.  Further this diffeomorphism takes $L_u$ to $\p D^2(b)\times T^{n-1}$ where $b=d_{\aff}(w,u)$.
But when $b<a/2$ one can displace the circle $\p D^2(b)$ in $D^2(a)$ by a compactly supported  area preserving isotopy.  Therefore $L_u$ can be displaced inside $\Phi^{-1}(p)$ by an isotopy that preserves the restriction of $\om$.  But this extends
 to a Hamiltonian isotopy of $M$ that displaces  $L_u$.
\end{proof}

\begin{defn}  Let $\De$ be any rational polytope and $u\in \intt\De$. If there is a probe $p_{F,\la}(w)$  through $u$ that satisfies the conditions in Lemma \ref{le:probe} we say that $u$ is {\bf displaceable by
the probe} $p_{F,\la}(w)$.
\end{defn}

\subsection{The point $v_0$.}\labell{ss:v0}

In \cite{FOOO1},  Fukaya, Oh, Ohta and 
Ono construct a  point $v_0$ in $\De$ by the 
following procedure. For $u\in\De$,
let $s_1(u): =\inf\{\ell_i(u):1\le i\le N\}$ where $\ell_i(u)$ is as in equation (\ref{eq:li}).  Let $P_0:=\De$ and $I_0: = \{1,\dots,N\}$ and define
\begin{eqnarray*}
S_1:&=&\sup\,\{s_1(u):u\in P_0\},\\
 P_1:
&=&\{u\in P_0:s_1(u)=S_1\},\\
I_1:&=& \{i\in I_0: \ell_i(u) = S_1 \mbox{ for all } u\in P_1\}.
\end{eqnarray*}
Then  $\De\less P_1 = \{u\in\De: \exists j\in I_0, \;\ell_j(u)<S_1\},$
and $P_1 =  \{u\in\De: \ell_j(u)\ge S_1\; \forall j\in I_0\}$.  
It follows from the definition of $S_1$ that $P_1$ is nonempty and it
 is easy to check that it is convex.
If the plane $\ell_i = S_1$ intersects $\intt P_1$ but does not contain it, there will be points $u\in P_1$ with $\ell_i(u)<S_1$ which is impossible. 
Therefore for each $i\in I_0$,
the function $\ell_i(u)$ is either equal to $S^1$ on $P_1$ or strictly greater than $S_1$ on $P_1$.   In other words, 
$$
I_1 = \{i: \ell_i(u) = S_1 \mbox{ for some } u\in \intt P_1\}.
$$
It follows easily that $|I_1|\ge 2$ (since if $I_1=\{i\}$ one can 
increase $s_1(u)$ by moving off $P_1$ along the  direction $-\eta_i$.) Hence $\dim P_1<n$. 
 
 
As an example, observe that if $\De$ is a rectangle with sides of lengths $a<b$, then $P_1$ is a line segment of length $b-a$.
Further, in the monotone case, we show at the beginning of \S3 that 
one can  choose coordinates so that $\De$ contains $0$ and is given by equations of the form \eqref{eq:De} with all $\ka_i = 1$.    Then, $\ell_i(0) = 1$ for all $i$.  Moreover,  
  any other point $y$ of
$\intt \De$  lies on a ray from $0$ that exits through some facet 
$F_j$.  Hence 
$\ell_j(y) <1$.  Thus
 $S_1=1$ and $P_1=\{0\}$.

Inductively, if $\dim P_k>0$, define $s_{k+1}:P_k\to\R,S_{k+1},P_{k+1}$ and $I_{k+1}$  by setting
\begin{eqnarray*}
s_{k+1}(u): &=&\left\{\begin{array}{ll}\inf\{\ell_i(u): \ell_i(u)>S_k\} &\mbox{if } u\in \intt P_k,\\
S_k&\mbox{if } u\in \p P_k,\end{array},\right.\\
S_{k+1}: &=&\sup\{s_{k+1}(u): u\in P_k\},\\
 P_{k+1}: &=&
\{u\in P_k:s_{k+1}(u)=S_{k+1}\},\\
I_{k+1}:&=& \{i: \ell_i(u) = S_{k+1}, \mbox{ for all } u\in P_{k+1}\}.
\end{eqnarray*}
Arguing much as above, they show in \cite[Proposition~9.1]{FOOO1}
 that $s_{k+1}$ is a continuous, convex piecewise affine function, that
  $P_{k+1}$ is a nonempty  convex polytope lying in $\intt P_{k}$,
and that
$$
I_{k+1}=  \{i: \ell_i(u) = S_{k+1} \mbox{ for some } u\in \intt P_{k+1}\}.
$$
It is not hard to see that   $\dim P_{k+1} <\dim P_k$ unless 
the functions $\ell_i, i\in I_{k+1},$ are constant on $P_k$.    Because $\De$ is bounded,  
at least one of the functions  $\ell_j$ for  $j\notin \cup_{r\le k} I_r$ must be nonconstant on $P_k$ when  $\dim P_k> \{0\}$.  Therefore,  after a finite number $s$ of steps one must have $\dim P_{k+s}<\dim P_k$.    Hence there is $K\le N$ such that  $P_K$ is a point; call it $v_0$.  By   \cite[Theorem~1.4]{FOOO1},
  $HF^*(L_{v_0},\chi)\ne 0$ for suitable $\chi$ when $\De$ is rational.
Observe also  that
\begin{equation}\labell{eq:Sk}
S_1 < S_2 < \dots< S_K,
\mbox{ and } I_k =\{i:\ell_i(v_0)=S_k\}.
\end{equation}
Further, $\ell_j(v_0)>S_K$ for all $j\notin \mbox {some } I_k$.
Finally observe that if  $V(P_k)$ denotes the  plane spanned by the vectors lying in $P_k$ for some $k\ge 0$, the fact that $P_{k+1}$ lies in the interior of  $P_k$ implies that\SSS

\begin{quote}
(*)  {\it  if $\dim V(P_{k+1}) <\dim V(P_k)$, 
the normals $\eta_i, i\in I_{k+1},$ project to vectors $\ov\eta_i$ in 
$V(P_k)/V(P_{k+1})$ whose nonnegative combinations $\sum q_i \ov\eta_i, q_i\ge 0,$ span  this quotient space.}
\end{quote}

\NI
(Really one should think of the normals $\eta_i$ as lying in the dual space to $\R^n$ so that this projection is obtained by restricting the 
linear functional $\langle\eta_i,\cdot\rangle$.)

\begin{rmk}\labell{rmk:FO}\rm It is claimed in early versions of 
\cite[Prop.~9.1]{FOOO1} that $\dim P_{k+1}<\dim P_k$ for all $k$.  But this need not be the case.
For example, suppose that $\De$ is the product of $\De'$ with a long interval, where $\De'$ is a square with one corner blown up a little bit as in Figure \ref{fig:sq}.  Then $I_1$ consists of the labels of the four facets of the square, $I_2$ contains just the label of the exceptional divisor, while $I_3$ contains the two facets at the ends of the long tube.  Correspondingly, $P_1$ is an interval, $P_2$ is a subinterval of $P_1$ and $P_3=\{v_0\}$ is a point.
\end{rmk}

\begin{figure}[htbp] 
   \centering
 \includegraphics[width=4in]{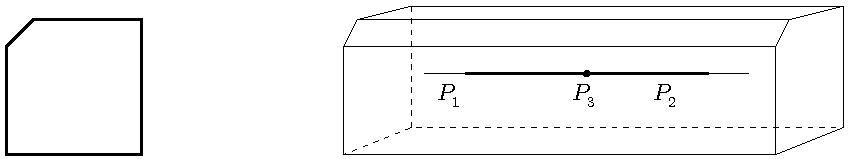} 
   \caption{The construction of $v_0$ for the polytope considered in Remark~\ref{rmk:FO}.}
   \label{fig:sq}
\end{figure}

\begin{lemma}\labell{le:v0}  For every rational polytope $\De$ the point $v_0$  is not displaceable by a probe.
\end{lemma}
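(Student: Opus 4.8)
The plan is to argue by contradiction. Suppose some probe $p_{F_i,\la}(w)$ through $v_0$ displaces it, so that $v_0$ lies strictly in the first half. Writing $a$ for the affine length of the probe and $b := d_{\aff}(w,v_0)$, this means $b < a-b$; I will produce a facet forcing $a-b\le b$, a contradiction. First I would pin down the two affine lengths. Since $\la$ is integrally transverse to $F_i$ and points from $w$ into $\intt\De$, we have $\langle\eta_i,\la\rangle=-1$, so parametrising the probe by affine arclength $t$ gives $\ell_i=t$ along it; hence $b=\ell_i(v_0)$ (this is also the equality case of Lemma \ref{le:affd}). For the forward part, the ray $v_0+s\la$ satisfies $\ell_j(v_0+s\la)=\ell_j(v_0)-s\langle\eta_j,\la\rangle$, and since $\De=\{x:\ell_j(x)\ge 0\ \forall j\}$ it exits at affine distance $a-b=\min\{\ell_j(v_0)/\langle\eta_j,\la\rangle:\langle\eta_j,\la\rangle>0\}$.

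Because $\langle\eta_j,\la\rangle$ is a positive integer whenever it is positive, every term in this minimum is at most $\ell_j(v_0)$. So it suffices to exhibit a \emph{single} facet $F_j$ with $\langle\eta_j,\la\rangle>0$ and $\ell_j(v_0)\le b=\ell_i(v_0)$: for then $a-b\le\ell_j(v_0)\le b$, contradicting $b<a-b$. The whole problem is thus reduced to finding one forward-facing facet whose affine distance to $v_0$ is no larger than $b$.

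The heart of the argument is locating this $F_j$, and here I would use property (*) together with the filtration $\R^n=V(P_0)\supseteq\cdots\supseteq V(P_K)=\{0\}$. Since $\la\neq 0$, there is a largest index $r$ with $\la\in V(P_r)$; then $\la\notin V(P_{r+1})$, so $\dim V(P_{r+1})<\dim V(P_r)$ and (*) applies at this level. The image $\ov\la$ of $\la$ in $V(P_r)/V(P_{r+1})$ is nonzero, and because the restricted normals $\{\ov\eta_j:j\in I_{r+1}\}$ positively span (the dual of) this quotient, no nonzero vector can satisfy $\langle\ov\eta_j,\cdot\rangle\le 0$ for all $j\in I_{r+1}$. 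Applying this to $\ov\la$ yields some $j\in I_{r+1}$ with $\langle\eta_j,\la\rangle=\langle\ov\eta_j,\ov\la\rangle>0$, and for this $j$ we have $\ell_j(v_0)=S_{r+1}$ by \eqref{eq:Sk}.

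It remains to verify $S_{r+1}\le\ell_i(v_0)$, and I would split on where the initial facet sits. If $i\notin\bigcup_k I_k$, then $\ell_i(v_0)>S_K\ge S_{r+1}$ and we are done. If instead $i\in I_{k_0}$, then $\langle\eta_i,\cdot\rangle$ is constant on $P_{k_0}$ and hence vanishes on $V(P_{k_0})$; since $\langle\eta_i,\la\rangle=-1\neq 0$ this forces $\la\notin V(P_{k_0})$, so $k_0>r$ and therefore $\ell_i(v_0)=S_{k_0}\ge S_{r+1}$. In either case $a-b\le\ell_j(v_0)=S_{r+1}\le\ell_i(v_0)=b$, the desired contradiction. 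I expect the middle step to be the main obstacle: one must recognise that the positive-spanning conclusion of (*), invoked at exactly the level $r$ where $\la$ drops out of the filtration, is precisely what supplies a forward facet controlled by $S_{r+1}$; the affine-distance bookkeeping around it is routine.
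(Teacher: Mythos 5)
Your proof is correct and follows essentially the same route as the paper's: both pin down the relevant affine lengths via Lemma \ref{le:affd}, locate the level of the filtration $V(P_0)\supseteq\cdots\supseteq V(P_K)$ at which $\la$ drops out, and invoke (*) there to produce a facet violating the half-way condition. Your endgame is organized slightly differently (you extract a forward-facing $F_j$ with $\ell_j(v_0)=S_{r+1}\le\ell_i(v_0)$ directly, whereas the paper first proves $\ell_m(v_0)>\ell_i(v_0)$ for all non-parallel $m\ne i$, using both rays of the probe, and then runs a case split on whether the entering facet lies in $I_r$), but the substance is the same.
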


\begin{proof}   Suppose that $v_0$ is displaced by a probe 
$p=p_{\la}(w_1)$ 
that enters $\De$ at the point $w_1\in \intt F_1$ and exits  $\De$ through $w_2\in F_2$.    Then 
$$
\ell_1(v_0)= d_{\aff}(w_1,v_0) < d_{\aff}(v_0,w_2)\le \ell_2(v_0),
$$
where the last inequality follows from Lemma ~\ref{le:affd}.

Recall from equation (\ref{eq:aff}) that  $d_{\aff}(v_0,w_2)$ is just the affine distance $d_{\la}(v_0,F_2)$ of $v_0$ from 
 $F_2$ in direction $\la$.  
Because the ray from $v_0$ to $w_2$ in direction $\la$  goes through no facets of $\De$ until it meets  $F_2$ (and perhaps  some other facets as well)  at $w_2$, we have
$$
\ell_1(v_0) < d_\la(v_0,F_2)\le d_{\la}(v_0,F_i),\quad \mbox{ for all } i.
$$
A similar argument applied to the ray from $v_0$ in direction $-\la$ gives
$$
\ell_1(v_0) < d_{-\la}(v_0, F_i) \quad \mbox{ for all } i\ne 1.
$$
(Here we use the fact that $w_1\in \intt F_1$ so that the ray meets $F_1$ before all other facets.)
 But if  $d_{\la}(v_0,F_i)<\infty$ then
 $d_{\la}(v_0,F_i)\le \ell_i(v_0)$ by Lemma \ref{le:affd}.
 Therefore, 
 for all facets $F_i, i\ne 1,$ that are not parallel to $\la$, we have
 \begin{equation}\labell{eq:lv0}
 \ell_i(v_0)> \ell_1(v_0).
\end{equation}

Now observe that because $P_K$ is a single  point $\{v_0\}$  the vectors $\eta_i, i\in I_k, 1\le k\le K,$ span $\R^n$.   Therefore there is some $k\le K$ such that the $F_i, i\in I_k, $ are not all  parallel to $\la$.  Let $r$ be the minimum such $k$, and let $j\in I_r$ be such that $F_j$ is not parallel to $\la$.  If $j\ne 1$ then $S_r=\ell_j(v_0)>\ell_1(v_0)$ by 
equation\eqref{eq:lv0}.  Hence equation \eqref{eq:Sk} implies that
$1\in I_k$ for some $k<r$, which is impossible since $\la$ is not parallel to $F_1$.
On the other hand, if $j=1$ the same reasoning shows that all other elements of $I_r$  correspond to facets that are parallel to $\la$.   
Since by hypothesis the same is true for the elements of $I_{r-1}$.
Therefore $\la \in V(P_k)$ for $k<r$ but $\la\notin 
 V(P_r)$, so that $\la$ has nonzero image in $V(P_{r-1})/V(P_r)$.    But because there is only one $i\in I_r$ for which $\ell_i$ varies along $\la$ this contradicts (*).
  \end{proof}

See Example \ref{ex:2blow} for an example that illustrates how the point $v_0$ varies as the facets of $\De$ are moved.

\begin{rmk}\labell{rmk:ghost}\rm Later we need a slight generalization of this argument in which the set of functions $\ell_i, 1\le i\le N,$ that determine the facets of $\De$ are augmented by some other nonconstant linear functions $\ell_j'= \ka_j' - \langle\cdot,\eta_j'\rangle, j\in J,$ that are strictly positive on $\De$. 
Thus the hyperplanes $A_j'$ on which these functions vanish do not intersect $\De$, so that  the functions $\ell_j'$
 correspond to ghost (or empty) facets of $\De$.
But then, for all $v\in \De$, $ i\in \{1,\dots,N\}$ and $j\in J$, we have
 $$
\ell_i(v) = d_{\eta_i}(v,F_i) < d_{\eta_i}(v, A_j') \le 
d_{\eta_j'}(v, A_j') = \ell_j'(v).
$$
Therefore the maximin procedure that constructs $v_0$ is unaffected by the presence of the $\ell_j'$.  Also, the proof of Lemma \ref{le:v0} goes through as before.
\end{rmk}

\subsection{Stable Displaceability}\labell{ss:stdis}

We end this section with a brief digression about stably displaceable fibers.
The following  definitions are taken from Entov--Polter\-ovich \cite{EPrig}.

\begin{defn}\labell{def:stdis}  A point $u\in \intt\De$ of a smooth moment polytope is said to be {\bf stably displaceable} if  $L_{u}\times S^1$ is displaceable in $M_\De\times T^*S^1$ where $S^1$ is identified with the zero section.
Moreover  $L_{u_1}$ (or simply $u_1$) is called   a {\bf stable stem} if all  points in $\intt \De\less u_1$ are {\it stably displaceable}
\end{defn}

Theorem~2.1 of \cite{EPrig} states  that $L_u$ is stably displaceable if there is an integral vector $H\in \ft$  such that the corresponding circle action $\La_H$ satisfies the following conditions:

\begin{itemize}\item the normalized Hamiltonian function $K_H$ that  
generates $\La_H$  does not vanish on $L_u$;

\item $ \La_H$ is {\it compressible}, that is,  when considered as a loop in the  group $\Ham(M_\De,\om)$ 
of Hamiltonian symplectomorphisms, some multiple of the  circle $\La_H$ forms a contractible  loop in  $\Ham(M_\De,\om)$.
\end{itemize} 
\NI
It is easy to check that $K_H:M_\De\to \R$ has the form
$$
K_H(x) = \langle H,\Phi(x)-c_\De\rangle,
$$
where $c_\De$ is the center of gravity of $\De$.  The paper \cite{MT} makes a detailed study of those $ H$  for which $\La_H$ is compressible. 
This condition implies that the quantity  $\langle H,c_\De\rangle$ depends linearly on the positions of the facets of $\De$, and so the corresponding $H$  are called {\it mass linear functions} on $\De$.

There are two cases,   according to whether the circle $\La_{kH}$ contracts in $\Isom(M)$ or only in $\Ham(M)$, where $\Isom(M)$ is the group  of isometries of the
canonical K\"ahler metric on $M: = M_\De$  obtained by thinking of it as a (nondegenerate)\footnote
{
Here $N$ is the number of facets of $\De$, i.e. there are no \lq\lq ghost"
(or empty) facets. With this assumption the K\"ahler structure is unique.}
symplectic quotient  $\C^N/\!/T'$.  In the first case  $H$ is called inessential, while in the second $H$ is essential.  
The inessential case can be completely understood.  The following argument uses the definitions and notation of \cite{MT} without explanation.\footnote
{
The above definition of inessential is equivalent to the one of \cite[Definition~1.14]{MT} by \cite[Corollary~1.28]{MT}.}

\begin{prop}\labell{prop:stdis} The fiber $L_u$ is stably displaceable by an inessential $H$ if and only if it may be displaced by a probe $p_{F,\la}(x)$ whose direction vector $\la$ is parallel to all but two of the facets of $\De$, namely the entering and exiting facets of the probe.  
\end{prop}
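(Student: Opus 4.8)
The plan is to make the correspondence explicit by choosing coordinates adapted to the probe. Since $\la$ is integrally transverse to the entering facet $F_1$ and parallel to every facet other than $F_1$ and the exiting facet $F_2$, I would take coordinates with $\la = e_1$, so that $\langle e_1,\eta_i\rangle = 0$ for $i\neq 1,2$ while $\langle e_1,\eta_1\rangle = -1$; the value $\langle e_1,\eta_2\rangle = +1$ is then forced by smoothness, since at a vertex of $F_2$ lying off $F_1$ the determinant of the adjacent normals equals $\pm\langle e_1,\eta_2\rangle$ times an integer and so must be a unit. In these coordinates $\De$ is an interval bundle over its projection $\ov\De=\pr(\De)$: the fibre over $\ov x$ is the segment $g_1(\ov x)\le x_1\le g_2(\ov x)$, where $g_1,g_2$ are the affine functions cutting out $F_1,F_2$. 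The probe through a point $u\in\intt\De$ in direction $+e_1$ (entering $F_1$) or $-e_1$ (entering $F_2$) displaces $L_u$ exactly when $u$ lies strictly off the midpoint locus $A_\la=\{x_1=\tfrac12(g_1(\ov x)+g_2(\ov x))\}$, and a direct computation rewrites this locus as the affine hyperplane $A_\la=\{\langle\eta_2-\eta_1,x\rangle=\ka_2-\ka_1\}$.

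This singles out the candidate $H:=\eta_2-\eta_1$ (under the identification $\ft\cong\ft^*\cong\R^n$). I would first prove the normalization $c_\De\in A_\la$, which gives $\langle H,c_\De\rangle=\ka_2-\ka_1$ and hence $A_\la=\{K_H=0\}$. This is immediate from Fubini over the bundle: the centre of mass of each fibre is its midpoint, every fibre midpoint lies on the affine hyperplane $A_\la$, and $c_\De$ is the (length-weighted) affine average of these midpoints, so it lies on $A_\la$ as well. The same identity shows that $c_\De\cdot H$ equals the linear function $\sum_i r_i\ka_i$ with $r_1=-1$, $r_2=+1$ and $r_i=0$ otherwise, valid for all $\ka$ in the open set $\Cc_\De$; as this representation is unique, $H$ is mass linear, and since $\sum_i r_i=0$ it is \emph{inessential} by \cite[Def.~1.14, Cor.~1.28]{MT}.

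Because $\langle e_1,\eta_2-\eta_1\rangle=2>0$, the sign of $K_H(u)$ records which side of $A_\la$ the point $u$ lies on, i.e. whether $u$ is strictly less than halfway along the probe from $F_1$ or from $F_2$. This yields $(\Leftarrow)$ at once: if the special probe displaces $L_u$ then $K_H(u)\neq0$ for the inessential $H=\eta_2-\eta_1$, whose circle $\La_H$ is compressible, so Theorem~2.1 of \cite{EPrig} makes $L_u$ stably displaceable; conversely $K_H(u)\neq0$ forces $u$ strictly off $A_\la$, so one of the two special probes $p_{F_1,\la}$, $p_{F_2,-\la}$ displaces it (the entry point lies in the relative interior of its facet because $u\in\intt\De$). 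For $(\Rightarrow)$ I would start from an inessential $H$ with $K_H(u)\neq0$ and reconstruct a probe, invoking the structure of inessential functions in \cite{MT}: writing $c_\De\cdot H=\sum_i r_i\ka_i$ with $\sum_i r_i=0$, the facets with $r_i\neq0$ organize into interval-bundle data, so $H$ is a combination of elementary inessential functions $\eta_j-\eta_i$, each attached to a pair $(F_i,F_j)$ admitting a direction transverse to $F_i,F_j$ and parallel to all remaining facets. Since $K_H=\sum(\text{coeff})\,K_{\eta_j-\eta_i}$ does not vanish at $u$, some summand has $K_{\eta_j-\eta_i}(u)\neq0$, and the corresponding special probe displaces $L_u$ by the correspondence just established.

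I expect the main obstacle to be precisely this last step: extracting the geometric cylinder structure — a lattice direction parallel to all but two facets — from the purely algebraic condition $\sum_i r_i=0$, and, when $H$ is not elementary, reducing it to a single summand $\eta_j-\eta_i$ that is still nonzero at $u$; both of these lean on the classification in \cite{MT} rather than on the probe geometry. By contrast $(\Leftarrow)$ is essentially self-contained once the coordinate normalization and the identity $c_\De\in A_\la$ are in hand, so the real weight of the proof sits in the converse.
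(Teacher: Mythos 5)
Your proposal is correct in outline and follows the same overall strategy as the paper's proof: the relevant inessential function is $H=\pm(\eta_2-\eta_1)$, the key normalization is that $c_\De$ lies on the midpoint locus $\{\ell_1=\ell_2\}$ of the probe direction, and the converse is handled by decomposing a general inessential $H$ into elementary summands $\eta_{j}-\eta_{i}$ and extracting a probe from a summand whose $K$-function does not vanish at $u$. The one genuinely different ingredient is your proof that $c_\De$ lies on the midpoint hyperplane: the paper deduces this from the affine reflection interchanging the entering and exiting facets supplied by \cite[Lemma~3.4]{MT}, whereas your Fubini argument (each fiber of the interval bundle has its center of mass at its midpoint, all midpoints lie on the affine hyperplane $\{\langle\eta_2-\eta_1,x\rangle=\ka_2-\ka_1\}$, and $c_\De$ is a length-weighted affine average of them) is self-contained and, as a bonus, yields the mass linearity of $H$ directly. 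One caveat: your criterion ``mass linear with $\sum_i r_i=0$ implies inessential'' is not the statement of \cite[Definition~1.14]{MT} --- inessentiality requires $\sum_{i\in I}\be_i=0$ separately on each \emph{equivalence class} $I$ of facets, so you still need \cite[Lemma~3.4]{MT} to know that the entering and exiting facets are equivalent (this is exactly what the existence of a lattice direction integrally transverse to them and parallel to all the other facets buys you). The same per-class condition is what licenses the decomposition of a general inessential $H$ into elementary pairs admitting such a parallel direction in your converse, which --- as you correctly anticipate --- is where the argument genuinely leans on the structure theory of \cite{MT}, just as the paper's does.
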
  
\begin{proof}   Suppose first that $L_u$ is displaceable by a probe $p_{F,\la}(w)$ with the given property.  Then, by \cite[Lemma~3.4]{MT}, the entering and exiting facets $F: = F_1$ and $ F_2$ of the probe are equivalent and 
there is  an affine reflection of $\De$ that interchanges them.
(Cf. \cite[Definition~1.12]{MT}.)  Moreover, 
$\la$ must be integrally transverse to the exiting facet $F_2$.  Hence the
hyperplane that is fixed by this symmetry contains the midpoint of the probe 
as well as the center of gravity $c_\De$.
 Hence, if $H =\eta_1-\eta_2$, 
$K_H$ does not vanish on $L_u$.  Moreover 
 $\La_H$ is compressible by \cite[Corollary~1.28]{MT}.
Thus $u$ is stably displaceable by an inessential $H$. 

Conversely, if $u\in \intt\De$ is stably displaceable, there is an inessential $H$ 
such that $K_H(L_u):=\langle H,u-c_\De\rangle  \ne 0$.  
	Then \cite[Corollary~1.28]{MT} 
 implies that $H=\sum \be_i\eta_i$ where
$\sum_{i\in I}\be_i=0$ for each equivalence class of facets $I$.  
But each such $H$ is a linear combination of (inessential) vectors $H_\al$ 
of the form $\eta_{\al_2}-\eta_{\al_1}$ where $\al_1,\al_2$ are equivalent.  Therefore there is some pair $\al$ such that  $\langle H_\al,u- c_\De\rangle < 0$.  Let $p$ be the probe from $F_{\al_1}$ through $u$ in direction $\la = H_\al$. (Observe that $\la$ does point into $\De$ since the $\eta_i$ are outward normals.) Then the probe must start at some point in $\intt F_{\al_1}$ since it is parallel to all facets that meet $F_{\al_1}$ and $u\in \intt\De$.    Moreover,
because there is an affine symmetry that interchanges the facets $F_{\al_1}, F_{\al_2}$ while fixing the others, $c_\De$ must lie half way along this probe. Hence, because 
 $\langle H_\al,u\rangle < \langle H_\al,c_\De\rangle $ this probe displaces $u$. \end{proof}

The geometric picture 
for fibers stably displaceable by an essential mass linear $H$ is much less clear.  We show in \cite{MT} that there are no monotone polytopes in 
dimensions $\le 3$ with essential $H$.  In fact,  
\cite[Theorem~1.4]{MT} states that there is exactly one family $Y_a(\ka)$  of 
 $3$-dimensional polytopes with essential $H$. They correspond to
 nontrivial bundles over $S^2$ with fiber $\C P^2$, and always have a symplectically embedded $2$-sphere which is a section of the bundle and lies
 in a class $A$ with $c_1(A)< 0$.   Hence they cannot be monotone.
  (In \cite[Example~1.1]{MT}, this section is represented by the shortest vertical edge, which has Chern class $2-a_1-a_2$ where $a_1,a_2\ge 1$ and $a_1\ne a_2$.)

 It is not clear whether there are
higher dimensional monotone polytopes   with essential $H$.  In particular,
 at the moment there are no  examples of monotone polytopes for which $u_0$ is known to be a stable stem but not known to be a stem.

\section{Monotone polytopes}

There are several possible definitions of a monotone (moment) polytope.
We have chosen to use one that is very closely connected to the geometry of $\De$.

\begin{defn}\labell{def:mono}
 We shall call a  simple smooth polytope $\De$  {\bf monotone} if:\SSS
 
\NI $\bullet$
$\De$ is an integral  (or lattice) polytope in $\R^n$ 
with a unique interior integral point  $u_0$,\SSS


\NI $\bullet$  $\De$ satisfies the  {\bf vertex-Fano} condition:
for each vertex 
$v_j$ we have
$$
v_j+\sum_ie_{ij} = u_0,
$$ 
where  $e_{ij}, 1\le i\le n,$ 
are the primitive integral
 vectors from $v_j$ pointing along the edges of $\De$.
\end{defn}
 
 It follows that for every vertex  $v$ one can choose coordinates for which $u_0=(0,\dots,0)$, $v=(-1,\dots,-1)$  and the facets through $v$ are $\{x_i=-1\}$, $i=1,\dots,n$.  In particular $\ell_j(u_0) = 1$ for all facets $F_j$.   Thus if we translate $\De$ so that $u_0=\{0\}$ the structure 
 constants $\ka_i$ in the formula (\ref{eq:De}) are all equal to $1$.
 
 \begin{rmk}\labell{rmk:inter}\rm  (i) 
 An equivalent formulation is that $\De$ is a simple smooth lattice  polytope with $\{0\}$ in its interior and such that the 
 structure 
 constants $\ka_i$  are all equal to $1$.  To see this, note that if $v$ is a vertex and $e_i$ are the primitive integral vectors along the edges from $v$
 then the lattice  points in $\De$ may all be written as
  $v + m_ie_i$ for some non-negative integers $m_i\ge 0$.  
  Thus $0$ has such an expression, and in this case the $m_i$ are  just the structural constants.  Thus our definition is equivalent to 
  the usual definition of Fano for  the   dual polytope 
$\De^*$ (the simplicial polytope
determined by the fan of $\De$).\MS

\NI (ii)
  Although it is customary to assume that the point $u_0$ is the {\it unique} interior integral point, it is not necessary to do this. For if
 we assume only that $u_0 \in \intt \De$ and that
 the vertex-Fano condition is satisfied by   every vertex 
 we may conclude as above that $\ell_j(u_0) = 1$ for all facets $F_j$.
 Therefore there cannot be another 
 integral interior point $u_1$.  For in this case, we must have
$d_{\aff}(u_0,y) > 1$ where  $y\in F$ is the point 
where the ray from $u_0$ through $u_1$ exits $\De$.  But by
 Lemma \ref{le:affd} we must also 
 have $d_{\aff}(u_0,y) \le |\ell_F(u_0)| = 1$, a contradiction.
  \end{rmk}

 It is  well known that the monotone condition for moment
 polytopes is equivalent to the condition that the corresponding
 symplectic toric manifold $(M_\De,\om_\De)$
 is monotone in the sense that $c_1: = c_1(M) = [\om_\De]$.  A proof is given in \cite[Proposition~1.8]{EPrig}.
We include  another for completeness.   In the statement below we denote the moment map by $\Phi:M\to \De$.  Recall also that by construction the affine length of an edge $\vareps$ of $\De$ is just 
$\int_{\Phi^{-1}(\vareps)}\om_\De$.

\begin{lemma}\labell{le:Fa} Let $\De$ be a smooth integral moment polytope with an interior integral point $u_0$.
Then $\De$ is monotone  if and only if the affine length of each edge 
$\vareps$ of $\De$ equals $c_1(\Phi^{-1}(\vareps))$.
 \end{lemma}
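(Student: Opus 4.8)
The plan is to test monotonicity edge by edge. Both quantities in the statement are integers: the affine length $A(\vareps):=d_{\aff}(v,v')$ of an edge $\vareps$ with endpoints $v,v'$ equals the symplectic area $\int_{\Phi^{-1}(\vareps)}\om_\De$, while $c_1(\vareps):=\langle c_1(M),[\Phi^{-1}(\vareps)]\rangle$ is the Chern number of the invariant sphere over $\vareps$. I will show that $A(\vareps)=c_1(\vareps)$ for every edge if and only if the vertex-Fano condition of Definition \ref{def:mono} holds at every vertex.

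Fix an edge $\vareps$ and one endpoint $v$. By smoothness I may choose integral affine coordinates so that $v=0$ and the primitive edge vectors $e_1,\dots,e_n$ issuing from $v$ are the standard basis, with $e_1$ pointing along $\vareps$. Then the other endpoint is $v'=v+A(\vareps)\,e_1$, the edge vector of $\vareps$ at $v'$ is $e_1'=-e_1$, and the remaining primitive edge vectors $e_2',\dots,e_n'$ at $v'$ are constrained by the facets containing $\vareps$. Writing each $e_j'$ in the basis $\{e_i\}$, these facet conditions together with the unimodularity (smoothness) condition at $v'$ force
\[
e_j'=e_j-m_je_1,\qquad j=2,\dots,n,
\]
for a unique tuple of integers $m_2,\dots,m_n$; this is just the change-of-frame data between the two toric charts along $\vareps$.

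The one input from toric geometry is the resulting formula for the Chern number,
\[
c_1(\vareps)=2+\sum_{j=2}^n m_j .
\]
This is the standard computation for a torus-invariant $2$-sphere: writing $c_1(M)=\sum_i[D_i]$ as the sum of the toric divisors $D_i=\Phi^{-1}(F_i)$, the two facets meeting $\vareps$ only at its endpoints contribute $D_i\cdot\Phi^{-1}(\vareps)=1$ each, while the facets containing $\vareps$ contribute exactly the integers $m_j$. Equivalently, from the symplectic side, one splits the normal bundle of $\Phi^{-1}(\vareps)\cong S^2$ into line bundles and reads off their degrees from the transition data $e_j'=e_j-m_je_1$. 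This identification of the $m_j$ with the Chern contributions, and in particular getting the signs and orientations right, is the main point to verify.

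The remainder is elementary bookkeeping. For a vertex $v$ put $w_v:=v+\sum_i e_i(v)$, so that the vertex-Fano condition at $v$ reads $w_v=u_0$. Substituting $e_1'=-e_1$, $e_j'=e_j-m_je_1$ and $v'=v+A(\vareps)e_1$ gives
\[
w_{v'}-w_v=\bigl(A(\vareps)-2-\sum_{j=2}^n m_j\bigr)\,e_1=\bigl(A(\vareps)-c_1(\vareps)\bigr)\,e_1,
\]
so that $A(\vareps)=c_1(\vareps)$ exactly when $w_v=w_{v'}$. Since the $1$-skeleton of $\De$ is connected, the edge equality holds for all edges if and only if $w_v$ is the same point $w$ for every vertex $v$. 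If $\De$ is monotone this common value is $u_0$ by the vertex-Fano condition, so the edge equality holds throughout. Conversely, if $w_v\equiv w$, then for every facet $F_i$, choosing a vertex $v\in F_i$ and using $\ell_i(v)=0$ together with $\langle\eta_i,e_k(v)\rangle=-\delta_{ik}$ gives $\ell_i(w)=\ell_i(v)-\sum_k\langle\eta_i,e_k(v)\rangle=1$; hence $\ell_i(w)=1$ for all $i$, so $w\in\intt\De$ and all support constants relative to $w$ equal $1$. By Remark \ref{rmk:inter} this is exactly the monotone condition (with $u_0=w$), which finishes the equivalence.
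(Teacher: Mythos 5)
Your proof is correct, and while it starts from the same local picture as the paper (integral affine coordinates at one endpoint of $\vareps$, with the edge vectors at the other endpoint written as $e_1'=-e_1$, $e_j'=e_j-m_je_1$ --- your $m_j$ are the negatives of the paper's $b_j$), it diverges in two genuine ways. First, the paper computes $c_1(\Phi^{-1}(\vareps))$ by taking the circle action generated by $pr_1\circ\Phi$ and subtracting the weight sums at the two fixed points, whereas you use the toric divisor formula $c_1=\sum_i[D_i]$ (equivalently the wall relation $\eta_{(1)}+\eta_G+\sum_j m_j\eta_{(j)}=0$, which indeed gives $D_{(j)}\cdot C=m_j$ and hence $c_1(\vareps)=2+\sum_j m_j$, matching the paper's $2-\sum_j b_j$); both rest on a standard unproved input of comparable depth, so neither is more rigorous than the other. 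Second, and more usefully, you organize the equivalence through the discrepancy identity $w_{v'}-w_v=(A(\vareps)-c_1(\vareps))\,e_1$ and connectivity of the $1$-skeleton, which yields both implications simultaneously and in particular supplies an honest proof of the converse --- the direction the paper dismisses with ``the proof of the converse is similar.'' Your closing step, showing that a constant value $w$ of $v\mapsto v+\sum_ie_i(v)$ automatically satisfies $\ell_i(w)=1$ for every facet and is therefore the interior integral point $u_0$ of Remark \ref{rmk:inter}, is exactly the detail that direction needs. The only place to be careful is the sign bookkeeping in $D_{(j)}\cdot C=m_j$, which you flag yourself; it checks out.
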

 
 \begin{proof}   Suppose that $v_j+\sum_ie_{ij} = u_0$ for all vertices.
  Suppose that $\vareps=:a e_{01}$ is the edge between the vertices $w_0$ and $w_1$, and assume  that the other edges $\vareps_{0i}$ starting at $w_0$ and in the directions $e_{0i}$ end at the points $w_i,2\le i\le n$.
   Move $\De$ by an integral linear transformation so that $w_0=(0,\dots,0)$ and 
   so that $e_{0i}$ points along  the $i$th coordinate direction, for $i=1,\dots,n$.
   Then $w_1=(a,0,\dots,0)$ and we need to check that
   $c_1(\Phi^{-1}(\vareps_{01})) = a$.  Note that in this coordinate
    system $u_0=(1,\dots,1).$

   Consider the vertices $y_1=w_0, y_2,\dots,y_n$ connected to $w_1$.
   There is one such vertex $y_j=w_1+m_je'_{1j}$ in each 
   of the $2$-faces $f_{01j}=
   \sspan(e_{01},e_{0j})$, $j>1$, containing $e_{01}$.  (Here $e'_{1j}$ is a primitive integral vector pointing from $w_1$ to $y_j$.)   Therefore 
   $e'_{1j} = (b_j,0,\dots,0, 1,0,\dots,0)$,
   where the $1$ appears as the  $j$th component.\footnote
   {
   The $j$th component must be $1$ because the $e'_{1j}, j>1,$ together with $-e_{01}$ form a lattice basis.}
   Therefore the identity
 $(1,\dots,1)= (a,0,\dots,0) + (-1,0,\dots,0) +\sum_j e'_{1j}$ implies that
  $$
 1= a -1 +\sum_{j\ge 2} b_j.
 $$
   Now consider the $S^1$ action on $M_{\De}$ with Hamiltonian
  given by $pr_1\circ\Phi$, where $pr_1$ denotes  projection to the first coordinate.  The weights of this action at $\Phi^{-1}(w_1)$ are 
  $(-1,b_{2},\dots,b_{n})$ with sum $m_1= -1+\sum b_{j}$,
  while its weights at $\Phi^{-1}(w_0)$ are $(1,0\dots,0)$ with sum $m_0=1$.  
  Therefore 
  $$
  c_1\bigl(\Phi^{-1}(e_{01})\bigr) = m_0-m_1=1-(-1+\sum_j b_{j}) = a,
  $$
as required.  The proof of the converse is similar.
\end{proof}

In the next lemma  we denote by $\Ss:=\Ss(\De): = \De\cap (-\De)\cap \bigr(\Z^n\less \{0\}\bigl)$ the set of nonzero 
symmetric integral points of $\De$, where we assume that  $u_0=\{0\}$.

\begin{lemma}\labell{le:probO}  Let $\De$ be a monotone polytope.
If $U$ is a sufficiently small  neighborhood of
 $u_0=\{0\}$, then the set of direction vectors of the probes that displace some point in $U$ is precisely $\Ss$.
\end{lemma}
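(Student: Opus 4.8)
The plan is to normalize and then establish the two inclusions separately. Since $\De$ is monotone I may assume, as at the start of this section, that $u_0=0$ and $\De=\{x:\langle\eta_i,x\rangle\le 1,\ i=1,\dots,N\}$, so that $\ell_i(x)=1-\langle\eta_i,x\rangle$ and $\ell_i(0)=1$ for every $i$. Because the $\eta_i$ and every candidate direction $\la$ are integral, each $\langle\eta_i,\la\rangle$ is an integer; hence the condition $\la\in\Ss$ is equivalent to $\langle\eta_i,\la\rangle\in\{-1,0,1\}$ for all $i$. I would also record at the outset that every element of $\Ss$ is primitive: if $v=k\mu$ with $k\ge 2$ and $\mu$ primitive integral, then $|\langle\eta_i,\mu\rangle|\le|\langle\eta_i,v\rangle|/k<1$ for all $i$, so $\mu$ would be a second interior integral point, contradicting Definition~\ref{def:mono}. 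This matters because probe directions are necessarily primitive.

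For the inclusion ``displaceable $\Rightarrow$ symmetric'', suppose a probe $p_{F_1,\la}(w_1)$ displaces a point $u\in U$. Integral transversality forces $\langle\eta_1,\la\rangle=-1$ (the sign is negative since $\la$ points inward and $\eta_1$ is outward), and along the probe the exit facet $F_2$ has $\langle\eta_2,\la\rangle>0$. Writing $u=w_1+s_u\la$ one finds $s_u=\ell_1(u)$ and, from $\ell_2(w_2)=0$, the exit parameter $s_2=s_u+\ell_2(u)/\langle\eta_2,\la\rangle$; the halfway condition $s_u<s_2/2$ of Lemma~\ref{le:probe} then becomes the clean displaceability inequality $\ell_1(u)\,\langle\eta_2,\la\rangle<\ell_2(u)$, call it $(\star)$. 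Now I use that $u$ is near $0$: shrinking $U$ so that $|\ell_i-1|<\tfrac13$ on $U$ makes $(\star)$ impossible when $\langle\eta_2,\la\rangle\ge 2$, so $\langle\eta_2,\la\rangle=1$. Next, for $u$ near $0$ the entering facet $F_1$ is the first facet met by the backward ray $u-t\la$, i.e.\ the minimizer of $\ell_i(u)/|\langle\eta_i,\la\rangle|$ over $\{i:\langle\eta_i,\la\rangle<0\}$; since $\ell_i(u)\approx 1$ and the reciprocals $1/|\langle\eta_i,\la\rangle|$ are discretely separated, this minimizer has maximal $|\langle\eta_i,\la\rangle|$, so transversality $|\langle\eta_1,\la\rangle|=1$ makes $-1$ the most negative of all the $\langle\eta_i,\la\rangle$. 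Symmetrically $F_2$ realizes the largest positive value, which we saw equals $1$. Hence $\langle\eta_i,\la\rangle\in\{-1,0,1\}$ for every $i$, i.e.\ $\la\in\Ss$.

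For the reverse inclusion, let $\la\in\Ss$. Since $\De$ is bounded with $0$ in its interior, the line $\{t\la\}$ exits in both directions; as every positive $\langle\eta_i,\la\rangle$ equals $1$ and every negative one equals $-1$ while $\ell_i(0)=1$, it exits forward exactly at $\la$ (on a facet $F_2$ with $\langle\eta_2,\la\rangle=1$) and backward exactly at $-\la$ (on a facet $F_1$ with $\langle\eta_1,\la\rangle=-1$), so $0$ is the midpoint of $[-\la,\la]$. If $-\la\in\intt F_1$, the probe $p_{F_1,\la}(-\la)$ has midpoint $0$ and displaces every point $t\la$ with $t\in(-1,0)$, all of which lie in $U$ once $|t|$ is small. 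If instead $-\la$ lies on a lower-dimensional face, every facet through $-\la$ is still transverse to $\la$ (each has $\langle\eta,\la\rangle=-1$), and I enter at a point $w\in\intt F_1$ close to $-\la$: the exit time $\tau(w)=\min_{\langle\eta_i,\la\rangle>0}\ell_i(w)/\langle\eta_i,\la\rangle$ and hence the midpoint $m(w)=w+\tfrac12\tau(w)\la$ depend continuously on $w$ and tend to $0$ as $w\to-\la$, so a point just short of $m(w)$ is displaced and lies in $U$.

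The main obstacle is precisely the passage from the central point $0$ to its neighbours: at $u=0$ the inequality $(\star)$ degenerates to the impossible $\langle\eta_2,\la\rangle<1$, consistently with Lemma~\ref{le:v0}, so the content of the lemma is that displaceability survives an arbitrarily small perturbation away from $0$, and one must control, uniformly for $u$ in a fixed small $U$, exactly which facets serve as the entering and exiting facets of the displacing probe. This is where integrality is essential: it discretizes the $\langle\eta_i,\la\rangle$ so that the ``near $1$'' estimates sharpen into the exact conclusions $\pm1$, and it guarantees (through uniqueness of the interior integral point) both that $\Ss$ consists of primitive vectors and that the two extreme facets identified above pin down every $\langle\eta_i,\la\rangle$. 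The continuity step in the reverse inclusion, needed when $-\la$ fails to lie in the relative interior of a facet, is the only other point requiring care.
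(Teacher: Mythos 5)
Your proof is correct, and its overall architecture (normalize so that all $\ka_i=1$, hence $\ell_i(0)=1$; prove the two inclusions separately; use smallness of $U$ to force $\ell_i\approx 1$ and then let integrality sharpen approximate bounds into exact ones) matches the paper's. The inclusion $\Ss\subset\La(U)$ is essentially identical to the paper's argument, including the perturbation of the initial point into $\intt F$ when $-\la$ lies on a lower-dimensional face. Where you genuinely diverge is in the inclusion $\La(U)\subset\Ss$: the paper chooses coordinates adapted to the entering facet so that $F\subset\{x_n=-1\}$ and $\la$ has last coordinate $1$, shows that $-(1-y_n)\la$ lies within $\eps$ of $F$ and $(1-y_n)\la$ within $\eps$ of $\De$, and then invokes discreteness of the lattice (an integral point $\eps$-close to $F$, resp.\ $\De$, must lie in it) to conclude $-\la\in F$ and $\la\in\De$; this requires the closing remark that $\eps$ can be chosen uniformly, one coordinate system per facet. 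You instead reformulate $\la\in\Ss$ as $\langle\eta_i,\la\rangle\in\{-1,0,1\}$ for all $i$ and pin down these integers directly: the displaceability inequality $(\star)$, which is the same inequality $\ell_1(u)<\ell_2(u)/\langle\eta_2,\la\rangle$ that drives the proof of Lemma~\ref{le:v0}, forces $\langle\eta_2,\la\rangle=1$, and the characterization of the entering (resp.\ exiting) facet as the first one hit by the backward (resp.\ forward) ray bounds every other pairing by $\ell_j(u)/\ell_1(u)<2$ in absolute value. This buys a cleaner uniformity in $\eps$ (the single condition $|\ell_i-1|<\tfrac13$ on $U$ suffices for all probes at once) and makes explicit where the halfway condition enters; the paper's version is shorter but leans on a coordinate-dependent discreteness estimate. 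Your preliminary observation that elements of $\Ss$ are primitive is a worthwhile point that the paper leaves implicit (there it follows from $\langle\eta_F,\la\rangle=-1$ for any facet $F$ containing $-\la$), and it is needed for the two sets in the statement to be literally equal.
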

\begin{proof}   
Given $U$, let   $\La(U)$ be the set of direction vectors  of probes $p_{F,\la}(w)$ that displace some point $y$ in $U$.

 We first claim  that $\Ss\subset \La(U)$ for all $U$.  To see this,  
 observe first that if $\la\in \Ss$ then $\la$ (considered as a direction vector)  is integrally transverse to every facet $F$ containing the point $-\la$. (This holds because we may choose coordinates 
 so that $u_0=\{0\}$ and $F= \{x_1=-1\}$.)  Therefore  
 for each such pair $\la,F$ there is a probe $p_{F,\la}(-\la)$.
This exits  $\De$ at the point $\la$ and  has  midpoint at $\{0\}$.  If $-\la\in \intt F$ this probe  displaces all points less than half way along it.  
 Moreover, if $\la \not\in \intt F$, then because $U$ is open any probe $p_{F,\la}(w)$ starting at a point $w\in \intt F$  sufficiently close to $-\la$ will displace some points of $U$.  Hence  $\Ss\subset \La(U)$ as claimed.

 We next claim that if $\la\in \La(U)$ and
  $U$ is sufficiently small then  $\pm\la\in \De$.  Since $\la\in \Z^n$ this means that $\la\in \Ss$, which will complete the proof.
  
 To prove the claim, consider a probe $p_{F,\la}(w)$ that displaces some $y\in U$, and choose coordinates $x_1,\dots, x_n$ on $\R^n$ so that $F \subset \{x_n=-1\}$.    Then the direction $\la$  is an integral vector with last coordinate $=1$.  Therefore $-\la$ is an integral point in the plane $\{x_n=-1\}$.
To arrange that $-\la\in F$,
assume that in this coordinate system $U$ is contained in the Euclidean ball about $\{0\}$ with radius $\eps$.  Then  if 
 $y=(y_1,\dots,y_n)\in U$ is displaced by $p_{F,\la}(w)$ we must have $ y-(1-y_n)\la = w\in F$.    Therefore the Euclidean distance of $-(1-y_n)\la$ to $F$ is  at most $\eps$.  Since $|y_n|<\eps$ and $-\la$ is integral, this implies that $-\la\in F$ if $\eps$ is sufficiently small.   Similarly, because  $y$ is less than half way along the probe, $y + (1-y_n)\la\in \De$.  
 Therefore the Euclidean distance of $(1-y_n)\la$ to $\De$ is  at most $\eps$, and so, by the integrality of $\la$ we may assume that $\eps$ is so small that $\la\in \De$ also.
 
The permissible size of
 $\eps $ here depends only on the image of $\De$ in our chosen coordinate system.  But we need make at most one such choice of coordinate system for each facet.
 Hence
  we may choose $\eps>0$ so small
 that the above argument works for all $\la\in \La(U)$.
\end{proof}

\subsection {Probes and the Ewald conjecture}\labell{ss:starE}

The (dual version of the) {\bf Ewald conjecture} of \cite{Ew} claims that if $\De$ is a monotone polytope then the set
$\Ss(\De)$ of integral symmetric points 
contains an integral basis of $\R^n$. Essentially nothing is known about the truth of this conjecture in general; for example, it  is even not known  whether $\Ss(\De)$ is  nonempty.  However, the conjecture has been checked by {\O}bro \cite{Ob} in 
dimensions  $\le 8$.  Moreover,  {\O}bro observes that in these dimensions a stronger form
 of the Ewald conjecture holds.  Namely
 {\it in dimensions $\le 8$  for every facet $F$, $\Ss(\De)\cap F$ contains an integral basis for $\R^n$.}
 
To prove displaceability by probes one needs a slightly different condition.  Given a face $f=\cap_{i\in I}F_i$ we shall denote by $\Starr (f)$ the union $ \cup_{i\in I}F_i$ of the facets containing $f$ and by $\starr (f)$ 
the union $ \cup_{i,j\in I,i\ne j}F_i\cap F_j$ of the codimension $2$  faces containing $f$.  
Further we define the deleted star
$\istar(f)$  as:
$$
\istar(f): =\Starr(f)\less\starr(f) =  \bigcup_{i\in I}F_i \;\less \;\bigcup_{i\ne j, i,j\in I}F_i\cap F_j.
$$
  In particular, $\Starr (F) = F = \istar(F)$ for any facet  $F$.

\begin{defn}\labell{def:starE} Let $\De$ be any smooth polytope with $\{0\}$ in its interior. We will say that  $\De$ satisfies the {\bf weak Ewald condition} if 
$\Ss(\De)$ contains an integral basis of $\R^n$, and that
it satisfies the {\bf strong Ewald condition} if 
$\Ss(\De)\cap F$ contains an integral basis of 
$\R^n$  for every facet $F$.
A face $f$ satisfies the
{\bf star Ewald condition} if
 there is some element $\la\in \Ss(\De)$ with $\la\in \istar (f)$ but $-\la\not\in \Starr (f)$.   Further,
$\De$ satisfies the
star Ewald condition (or, more succinctly, is {\bf star Ewald}) if all its faces have this property.  
\end{defn}

\begin{rmk}\labell{rmk:star}
\rm (i) Because $\la$ and $-\la$ cannot lie in the same facet $F$, the star Ewald condition is satisfied by any facet $F$  for which  $\Ss\cap F\ne \emptyset$.   
\SSS

 \NI (ii)  If $\De$ is monotone then, because it has a unique interior integral point $u_0$,    we must have $u_0=\{0\}$ in the above definition.\SSS
 
 \NI (iii)  The star Ewald condition makes sense for 
 any (not necessarily smooth) polytope containing $\{0\}$ in its interior, and in particular for reflexive polytopes.  These are integral polytopes such that 
 $\{0\}$ has affine distance $1$ from all facets.  Thus, as in the monotone case,  the special point $v_0 = \{0\}= P_1$ is reached at the first step of the
 maximin construction in \S2.  However, we shall not work in this generality
 because we are interested in the question of when there is a unique nondisplaceable point, and the examples in Remark \ref{rmk:refl} suggest that this happens only in the smooth case.
\end{rmk}


The relationships between the strong Ewald and star Ewald   conditions  are
 not completely clear.  However, as we see in the
 next lemma, the star Ewald condition does imply the weak Ewald condition for monotone polytopes.\footnote{
I am indebted to  Benjamin Nill for  sharpening my original claim.} 

\begin{lemma}\labell{le:span}  If  a monotone
 polytope $\De$ has a
 vertex $v$ such that every face containing $v$ is star Ewald, then
$\De$ satisfies the weak Ewald condition.
\end{lemma}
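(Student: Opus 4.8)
The plan is to use the coordinates at the vertex $v$ furnished by the monotone condition to turn each star Ewald statement into a combinatorial constraint on the entries of a symmetric integral vector, and then to extract an explicit triangular basis from $\Ss(\De)$. Following the discussion after Definition \ref{def:mono}, I would fix coordinates so that $u_0=\{0\}$, $v=(-1,\dots,-1)$, and the $n$ facets through $v$ are $F_i=\{x_i=-1\}$ with outward normals $\eta_i=-e_i$ (where $e_1,\dots,e_n$ is the standard basis); in particular $\De\subseteq\{x:x_i\ge -1,\ i=1,\dots,n\}$. Since for any $\la\in\Ss(\De)$ both $\la$ and $-\la$ lie in $\De$, each coordinate satisfies $-1\le\la_i\le 1$, and integrality then forces $\Ss(\De)\subseteq\{-1,0,1\}^n$.

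Next I would read off the star Ewald condition for the faces through $v$. Every such face has the form $f_I=\bigcap_{i\in I}F_i$ for a nonempty $I\subseteq\{1,\dots,n\}$, and because $\De$ is simple these $F_i$ are exactly the facets containing $f_I$, so $\Starr(f_I)=\bigcup_{i\in I}F_i$ and $\starr(f_I)=\bigcup_{i\ne j\in I}F_i\cap F_j$. Hence, for $\la\in\Ss(\De)$, the requirement $\la\in\istar(f_I)$ says that exactly one index $i\in I$ has $\la_i=-1$, while $-\la\notin\Starr(f_I)$ says that no index $i\in I$ has $\la_i=+1$. Combined with $\Ss(\De)\subseteq\{-1,0,1\}^n$, a star Ewald vector $\la$ for $f_I$ (which exists by Definition \ref{def:starE}) therefore has exactly one of the coordinates $\{\la_i\}_{i\in I}$ equal to $-1$, all other $I$-coordinates equal to $0$, and the coordinates outside $I$ unconstrained in $\{-1,0,1\}$.

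Then I would build the basis by a greedy elimination over a shrinking index set. Set $J_0=\{1,\dots,n\}$. Having chosen distinct indices $i_1,\dots,i_{k-1}$ and vectors $b_1,\dots,b_{k-1}\in\Ss(\De)$, put $J_{k-1}=\{1,\dots,n\}\setminus\{i_1,\dots,i_{k-1}\}$ and apply the star Ewald hypothesis to the face $f_{J_{k-1}}$, which contains $v$. By the previous paragraph this yields a vector $b_k\in\Ss(\De)$ together with the unique index $i_k\in J_{k-1}$ for which $(b_k)_{i_k}=-1$, every other $J_{k-1}$-coordinate of $b_k$ being zero. Reordering the coordinates and the vectors by the resulting permutation $i_1,\dots,i_n$, the matrix with columns $b_1,\dots,b_n$ is triangular with diagonal entries $(b_k)_{i_k}=-1$, since $(b_k)_{i_m}=0$ whenever $m>k$ (as then $i_m\in J_k$). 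Its determinant is therefore $(-1)^n$, so $b_1,\dots,b_n$ is an integral basis of $\R^n$ contained in $\Ss(\De)$, which is precisely the weak Ewald condition.

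The one real subtlety is that the star Ewald property for a single face only guarantees that \emph{some} coordinate in $I$ equals $-1$, not which one; the purpose of running the argument over the nested sets $J_0\supset J_1\supset\cdots$ and recording the index $i_k$ that is actually attained is exactly to convert this purely existential information into the triangular pattern that forces linear independence. I expect this bookkeeping, rather than any geometric input, to be the main point to get right, since the coordinate normalization at $v$ and the bound $\Ss(\De)\subseteq\{-1,0,1\}^n$ are immediate consequences of the monotone hypothesis.
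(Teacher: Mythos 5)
Your proof is correct and follows essentially the same route as the paper's: fix coordinates at $v$ so that $\Ss(\De)\subseteq\{-1,0,1\}^n$, apply the star Ewald condition successively to the nested faces $\bigcap_{i\in J_{k-1}}F_i$ through $v$, and read off a triangular (hence unimodular) system of symmetric lattice points. The paper handles the index bookkeeping by "renumbering if necessary" where you track the indices $i_k$ explicitly and compute the determinant, but the argument is the same.
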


\begin{proof}  Choose coordinates for $\De$
so that $v = (-1,\dots. -1)$ and the facets through $v$ are 
$\{x_i=-1\}$.   Then $\De$ lies in the quadrant $\{x_i\ge -1\}$, so that the coordinates of  any point $\la \in \Ss(\De)$ must lie in
$\{0,\pm 1\}$.
By the star Ewald condition for the $0$-dimensional face $v$
(and renumbering the coordinates if necessary)
we may assume that there exists some $\lambda \in\Ss(\De)$ 
with $\lambda_1 = -1, \lambda_2 = ... = \lambda_n = 0.$
Now consider $f = \{x_2 = ... = x_n = -1\}$. Again,
by the star Ewald condition for $f$ (and renumbering  if necessary)
 we find
that there is $\lambda' \in\Ss(\De)$ with $\lambda'_2 = -1,
\lambda'_3 = ... = \lambda'_n = 0$. Proceeding in this way
we get $n$ lattice points in $\Ss(\De)$ forming a lattice basis.
\end{proof}

Here is another easy result.

\begin{lemma}  If a facet $F$ of  a monotone polytope  $\De$ contains a lattice basis consisting of  points in $\Ss(\De)$,
then each of its codimension $2$ faces satisfies the  star Ewald condition.
\end{lemma}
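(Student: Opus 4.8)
The plan is to fix one codimension-$2$ face $g$ of $F$ and exhibit an explicit element of $\Ss(\De)$ that witnesses its star Ewald condition. Since $\De$ is simple, $g = F\cap F'$ for a unique adjacent facet $F'$, and $F,F'$ are the only facets containing $g$; hence $\Starr(g) = F\cup F'$, $\starr(g) = g$, and $\istar(g) = (F\cup F')\less g$. Normalizing so that $u_0 = \{0\}$ and writing $F = \{\langle\eta,x\rangle = 1\}$, $F' = \{\langle\eta',x\rangle = 1\}$ with $\eta,\eta'$ the primitive integral normals, I would look for $\la\in\Ss(\De)$ with $\la\in F$ and $\langle\eta',\la\rangle = 0$. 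Such a $\la$ lies in $F\less g\subset\istar(g)$, and its negative satisfies $\langle\eta,-\la\rangle = -1$ and $\langle\eta',-\la\rangle = 0$, so $-\la\notin F\cup F' = \Starr(g)$; this is exactly the star Ewald condition for $g$.

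To produce such a $\la$, I would use the hypothesis that $F$ contains a lattice basis $\la_1,\dots,\la_n\in\Ss(\De)$. Each $\la_k\in F$ gives $\langle\eta,\la_k\rangle = 1$, and since $\la_k$ and $-\la_k$ both lie in $\De\subset\{\langle\eta',x\rangle\le 1\}$ and are integral, the integer $c_k := \langle\eta',\la_k\rangle$ lies in $\{-1,0,1\}$. It therefore suffices to show that $c_k = 0$ for some $k$.

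This last point is the only real obstacle, and I would settle it by a parity argument combined with smoothness. Suppose to the contrary that every $c_k$ is odd, i.e. $c_k\in\{-1,1\}$. Then $\langle\eta+\eta',\la_k\rangle = 1 + c_k$ is even for each $k$; since the $\la_k$ form a $\Z$-basis, every $w\in\Z^n$ is an integral combination of them, so $\langle\eta+\eta',w\rangle$ is even for all $w\in\Z^n$, and letting $w$ range over the standard basis gives $\eta+\eta'\in 2\Z^n$. On the other hand, $g$ is a nonempty face of a bounded polytope, so it contains a vertex $v$; by smoothness the $n$ facet normals at $v$ form a lattice basis of $\Z^n$, and $\eta,\eta'$ are two of them. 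In that basis $\eta+\eta'$ has coordinates $(1,1,0,\dots,0)$, so it is not in $2\Z^n$ --- a contradiction. Hence some $c_k = 0$, and the corresponding $\la_k$ is the required witness. The argument is close in spirit to the parity computation in Lemma \ref{le:span}, and I expect the verification that $c_k\in\{-1,0,1\}$ and the smoothness input to be entirely routine.
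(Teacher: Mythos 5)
Your proof is correct and takes essentially the same route as the paper's: reduce to finding a basis element of $\Ss(\De)\cap F$ whose pairing with the normal $\eta'$ of the second facet is zero, observe that these pairings lie in $\{-1,0,1\}$ by symmetry and integrality, and rule out the all-odd case by a parity argument against the lattice-basis hypothesis. The paper packages the parity contradiction as a determinant mod $2$ in coordinates adapted to a vertex of the codimension-$2$ face, while you phrase it as $\eta+\eta'\in 2\Z^n$ contradicting smoothness at such a vertex; these are the same computation.
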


\begin{proof}   With coordinates as in the previous lemma, 
it suffices to consider a face $f= F_1\cap F_2 = \{x_1=x_2=-1\}$ such that $\Ss(\De)\cap F_1$ contains a lattice basis.
Since $\Starr f = F_1\cup F_2$ and $\starr f = f$,
we need to show that there is a symmetric point $v$ in 
$F_1\less f$  with $-v\notin F_1\cup F_2$.

By assumption the points in $\Ss(\De)\cap F_1$ form a lattice basis.
 If some point in this set has the form  $v_1 = (-1,0,y_3\dots,y_n)$ then we are done.  Otherwise, there is a lattice basis consisting of
points  $v_1,\dots, v_n$ that all have  second coordinate $y_2 = \pm 1$. 
The points $v_1,v_j\pm v_1, j\ge 2$, also form a lattice basis, and we may choose the signs so that the first coordinate of each $v_j\pm v_1, j\ge 2,$ is zero.
But then the second coordinates of these points
are always multiples of $2$, which is impossible, since they form a matrix of determinant $\pm 1$.
\end{proof}

We now prove Theorem \ref{thm:stEmain}, which states
that for monotone polytopes $\De$ the  star Ewald condition
is equivalent to the property that
 every point in $\intt\De\less\{0\}$ can be displaced by a probe. 
\MS

\begin{proof}[Proof of Theorem \ref{thm:stEmain}]\,\,  For each point $x\in \De$ and disjoint face $f$ denote 
by $C(f,x)$
the (relative) interior of the cone with vertex $x$ and base $f$. Thus
$$
C(f,x) = \{rx + (1-r)y : r\in (0,1), y\in \intt f\}.
$$
(Here, the  relative interior $\intt f$ is assumed to have the same dimension as $f$.  In particular, for every vertex $v$, we have $v=\intt v$.)
Thus $$
\intt\De\less \{0\} = \cup_f C(f,\{0\}).
$$

If $p_\la(-\la)$ is a probe through $\{0\}$ starting at 
$-\la\in F\cap \Ss$, then $\la\in \De\less F$ so that
by convexity
$C(F,\la)\subset \intt \De$. It is then easy to check that
 all points in $C(F,\{0\})$ are displaceable by the probes 
$p_\la(x), x\in \intt F,$ in this direction $\la$.  

More generally, for each face $f$  of $\De$, the points 
in $C(f,\{0\})$ are displaced by  probes in the direction $\la\in \Ss$ if $-\la\in \istar (f)$ while $\la\notin \Starr (f)$.
For if $-\la\in  F\cap \istar(f)$  then 
$$
W: = C(f,-\la)\subset \intt F,\quad\mbox{ and }\;\;\;C\bigr(W,\la\bigr)\subset\intt\De.
$$
(Here, by slight abuse of notation, we allow the base of our cone to be a subset of a face rather than the face itself.)
Therefore  we may displace the points in $C(f,\{0\})$ by  the probes
$p_\la(w)$ where $w\in W=C(f,-\la)\subset \intt F$.

Conversely, let $f$ be a face such that every point in $C(f,\{0\})$ can be displaced by a probe.  We will show that $f$ satisfies 
the star Ewald condition.  To this end choose coordinates on $\De$ so that
$\De$ has facets $F_i: = \{x_i=-1\}, 1\le i\le n,$ where
$f=\cap_{1\le i\le d}F_i$. 

If  $f=F_1$, then for
$t>0$ consider the slice $\De_t:= \De\cap\{x_1=-t\}$. 
Because $\De$ is integral, there  are no vertices in the slice $\{0>x_1>-1\}$.
Therefore $\De_t$
 is a smooth polytope for $0<t<1$ with facets $F_j\cap \De_t, j\in J,$
where  $J=\{j : 1<j\le N, F_j\cap F_1\ne \emptyset\}$.
Every probe in $\De_t$ is a probe in $\De$.   Therefore,  
by Lemma~\ref{le:v0} there is a point $v_t\in \De_t$ 
that cannot be displaced by any probe in $\De_t$.
  Hence the direction vector
$\la=(\la_1,\dots,\la_n)$  of any probe that displaces $v_t$ must have 
$\la_1\ne0$. 
Now observe that because  $\{0\}$ is the unique point 
with $\ell_i=1$ for all $i$,
the construction of the special point in
 Section \ref{ss:v0} implies that $v_t\to \{0\}$ as $t\to 0$. 
Therefore $v_t$ is in the neighborhood $ U$ of Lemma \ref{le:probO}
 for sufficiently small $t$ so that $\la\in \Ss(\De)$.
If $\la_1 > 0$ then the probe must originate from a point in 
$F_1$.  Letting $t\to 0$ we see that $-\la\in F_1$.
On the other hand, if $\la_1 < 0$ a similar argument shows that $\la\in F_1$.
Thus in both cases $\Ss\cap F_1\ne 0$, as required by the 
star condition for $f=F_1$.

Now suppose that $\dim f=n-d < n-1$ and
let  $\La(f)$ be the set of directions $\la$
 of probes $p_\la(w)$ that displace points of
$C(f,\{0\})$ arbitrarily 
close to $\{0\}$.    
For each  $\la\in \La(f)$ denote 
$$
U_\la(f) = \{y\in C(f,\{0\})\,:\, y \mbox{ is displaced by a probe in direction }\la\}.
$$
Then  $\{0\}$ is in the closure of each $U_\la(f)$, and
$$
\bigcup_{\la\in \La(f)} U_\la(f)
$$
 contains all points in $C(f,\{0\})$ sufficiently close to $\{0\}$.

Now, for each facet $F$ containing $f$ consider the set
 $$
 W_{\la,F}(f) = \{w\in \intt F\;:\mbox{ the probe } p_\la(w) \mbox { displaces some } y\in U_\la(f)\}.
 $$
 Because each such probe 
$p_\la(w)$ meets $U_\la(f)$ less than half way along its length, 
we must have
 $C(W_{\la,F}(f),\la) \subset \intt \De$.   But this implies that 
$\la\notin F$ for any $F\supset f$, i.e. $\la\notin \Starr (f)$.
 Also $-\la\notin \starr f$, since if it were the initial points 
$w$ of the probes 
would lie in $\starr f$ and not in the interior of a facet, as is required.

It remains to check that there is some $\la\in \La(f)$ such that
 $-\la$ is in one of the facets $F_i$ containing $f$.
For this, it suffices that $-\la_i = -1$ for some $i\le d$.
But because $\la\notin F_i$ we know $-\la_i \in \{-1,0\}$ for these $i$.
And if $\la_i=0$ for all $i$ then $\la$ would be parallel to 
$C(f,\{0\})$, or, if $d=n$, would be equal to $\{0\}$.
Since $\la\in \p\De$, the latter alternative is impossible.
Therefore we may assume that $\dim f = n-d > 0$  and must
 check that there is an 
element in $\La(f)$ that is not parallel to $f$. 

To this end, we adapt the argument given above for facets.  We shall suppose
that the elements in $\La(f)$ are all parallel to $f$ and shall then show that there is a nondisplaceable point in $C(f,0)$.

For fixed $t\in (0,1]$ consider the polytope
$$
f_t: = \De\cap \{x_1=\dots=x_d=-t\},
$$
so that $f=f_1$, and define the set $I$ by
$$
i\in I \Longleftrightarrow\Bigl( F_i\cap \{x_1\in (-1,0)\}\ne \emptyset, \mbox{ and }\ell_i
\mbox{ is not constant on } f\Bigr).
$$
Since the functions $\ell_i, i\in I,$ are nonnegative on $f_t$ and the boundary of $f_t$ is the set where at least one $\ell_i$ vanishes, we may define 
a point $v_t\in f_t$ by applying the maximin construction of \S2 to the restriction of the functions $\ell_i, i\in I,$ to $f_t$.  
 The argument in 
Lemma \ref{le:v0} shows that this point $v_t$ is not displaceable by probes 
in $f_t$.   (The only new element in the situation is that some of the $\ell_i$ may represent ghost facets, i.e. they may not vanish anywhere on $f_t$. But this does not affect any of these arguments; cf. Remark \ref{rmk:ghost}.) 
 
The probes of interest to us have directions $\la\in \La(f)$.  Since these points $\la$ lie in the plane $\{x_1=0\}$  there is $\eps>0$ such that
each $\la$ lies in a facet $F$ of $\De$ that intersects $f_t$ for all $t\in(0,\eps)$. Therefore,
as in  Lemma~\ref{le:probO}, the directions $\la\in \La(f)$ are integrally transverse to the facets of $f_t$ for $t\le \eps$ when considered as probes 
in the plane $\{x_i = -t, 1\le i\le d\}$ containing $f_t$.  Hence   the probes of $\De$ with directions $\la\in \La(f)$ form a subset of the probes in $f_t$ for $t\le \eps$.  Therefore they cannot
displace $v_t$.  It remains to prove:\MS

\NI
{\bf Claim:}  $v_t\in C(f,0)\cap f_t$ {\it when } $t\le \eps$.  

To see this, let $F_j, j\in J_f,$ be the set of facets of $\De$ that intersect but do not contain $f$.   Then $J_f\subset I$ and the facets of $f$ are $f\cap F_j, j\in J_f$.   Now observe that if $\langle x,\eta_j\rangle \le 1$ then
$\langle tx,\eta_j\rangle \le t$, so that 
\begin{equation}\labell{eq:tx}
\ell_j(tx)  = 1-\langle tx,\eta_j\rangle\ge 1-t.
\end{equation}
Applying this with $x=p_0 = (-1,\dots,-1)\in \De$ we see that
 $$
 \ell_i(tp_0) = 1- t\langle \eta_i,\,p_0\rangle\ge 1-t,\quad\mbox{ for all }i\in I.
 $$
Further, because
 we chose coordinates so that
$f = \{x\in \De: x_i = -1, 1\le i\le d\}$, equation (\ref{eq:tx}) implies that
when $t\in (0,1]$ we have
\begin{eqnarray*}
 C(f,0)\cap f_t 
 &=&\bigl\{y:\ell_j(y)\ge 1-t, j\in J_f\bigr\}\cap \bigl\{ y_i = -t, 1\le i\le d\bigr\}.
\end{eqnarray*}
 Therefore the maximum value of the function $s_1(y): =\min_{i\in I} \ell_i(y)$  for $y\in f_t$  is at least $1-t$, and because $J_f\subset I$ it is assumed in $C(f,0)\cap f_t$.  
 Thus $P_1$, and hence also $v_t\in P_1$, lies in $C(f,0)$.
 This proves the claim, and completes the proof of the proposition.
\end{proof}

\begin{cor}  If $\De$ is a $2$-dimensional monotone polytope then
every point in $\intt\De\less \{0\}$ may be displaced by a probe.
\end{cor}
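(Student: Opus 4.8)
The plan is to reduce everything to Theorem \ref{thm:stEmain}. Since $\De$ is monotone, that theorem says every point of $\intt\De\less\{0\}$ is displaceable by a probe precisely when $\De$ is star Ewald, so it suffices to verify the star Ewald condition of Definition \ref{def:starE} for each proper face. In dimension $2$ the only proper faces are the edges (facets) and the vertices, so there are just two cases. The edge case I expect to be automatic: by Remark \ref{rmk:star}(i), a facet $F$ is star Ewald as soon as $\Ss(\De)\cap F\ne\emptyset$, because $\la$ and $-\la$ can never lie in the same facet. So the real content is to exhibit, for each vertex $v$, a symmetric point witnessing the star Ewald condition at $v$, and to place a symmetric point on each edge.

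First I would fix a vertex $v$ and pass to the adapted coordinates of Definition \ref{def:mono}, in which $u_0=\{0\}$, $v=(-1,-1)$, and the two facets through $v$ are $F_1=\{x_1=-1\}$ and $F_2=\{x_2=-1\}$; then $\De\subset\{x_1\ge -1,\ x_2\ge -1\}$ and every element of $\Ss(\De)$ has coordinates in $\{-1,0,1\}$. The vertex-Fano normalization makes the primitive edge vectors at $v$ equal to $(1,0)$ (along $F_2$) and $(0,1)$ (along $F_1$), so the lattice point $(0,-1)$ is the first lattice point along $F_2$ and hence lies in $F_2\subset\De$. I claim $\la=(0,-1)$ does everything at once. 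Granting that its reflection $(0,1)$ also lies in $\De$, we get $(0,-1)\in\Ss(\De)$; and then $\la\in F_2\less\{v\}\subset\istar(v)$ while $-\la=(0,1)\notin F_1\cup F_2=\Starr(v)$, which is exactly the star Ewald condition at $v$. The same argument with the roles of the coordinates exchanged puts $(-1,0)$ in $\Ss(\De)\cap F_1$. Since every edge of $\De$ is an edge at one of its endpoints, this simultaneously deposits a symmetric point on every facet, settling the edge case.

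The one genuinely substantive step --- the place where monotonicity is used rather than mere integrality --- is checking that the reflected point $(0,1)$ lies in $\De$. Here I would move to the second endpoint $v'=(-1,h)$ of the edge $F_1$, where $h\ge 0$ is an integer, and apply the vertex-Fano condition at $v'$: one edge there runs back along $F_1$ in direction $(0,-1)$, so the other primitive edge vector $e'$ satisfies $v'+(0,-1)+e'=0$, giving $e'=(1,1-h)$. Then $v'+e'=(0,1)$, so $(0,1)$ is exactly the first lattice point along the edge of $\De$ emanating from $v'$ in direction $e'$; being one primitive step from a vertex along an edge, it lies on that edge and hence in $\De$. This is the crux, and once it is in place the two cases above are complete and Theorem \ref{thm:stEmain} finishes the corollary. (The statement is also subsumed by the dimension-$\le 3$ result of Proposition \ref{prop:3}, but the planar case admits this self-contained verification.)
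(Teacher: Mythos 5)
Your proof is correct, but it takes a genuinely different route from the paper's. The paper disposes of this corollary in one line by invoking the classification of $2$-dimensional monotone polytopes (there are exactly five: the square and the standard simplex with $0$ to $3$ corners cut) and checking the star Ewald condition case by case. You instead give a uniform, classification-free argument: in vertex-adapted coordinates the point $(0,-1)=v+(1,0)$ is a lattice point one primitive step along an edge, hence in $\De$, and the vertex-Fano identity at the other endpoint $v'=(-1,h)$ of $F_1$ forces $v'+e'=(0,1)$ with $e'=(1,1-h)$ primitive along the edge leaving $v'$, so the reflected point also lies in $\De$; this settles both the vertex and the edge cases via Remark \ref{rmk:star}(i). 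What you have in effect done is transplant the machinery the paper only deploys in the $3$-dimensional case (Proposition \ref{prop:3}): your $(0,-1)$ and $(0,1)$ are precisely the special points $s_{v,F_2}$ and $s_{v',F'}=-s_{v,F_2}$ of the opposite facet, and your argument works unconditionally in dimension $2$ because the obstruction that complicates the $3$-dimensional proof (``small'' facets not containing their special points) cannot occur for edges of a lattice polygon. The paper's route is shorter given the known classification; yours is self-contained, exposes where monotonicity is actually used, and is the natural warm-up for the three-dimensional argument. Both are complete proofs.
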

\begin{proof} It suffices to check that the star Ewald condition holds,
which is easy to do in
each of the $5$ cases (a square, or a standard simplex 
 with $i$ corners cut off, where $0\le i\le 3$.)
\end{proof}

For the $3$-dimensional version of this result see Proposition \ref{prop:3}.


\section{Low dimensional cases.}\labell{s:special}

\subsection{The $2$-dimensional case.}\labell{ss:2}

In this section we discuss the properties of arbitrary, not necessarily monotone, $2$-dimensional polytopes.
We begin by showing that  there always is an inaccessible point near any {\it short odd edge}.  Here we say that an edge $\vareps$ is {\it odd} if its self-intersection number\footnote
{
By this we mean the self-intersection number of the $2$-sphere $\Phi^{-1}(\vareps)$
in the corresponding  toric manifold $M^4$.
This is the Chern class of its normal bundle, and equals $k$, where
we assume that  $\vareps$ has outward normal $(0,-1)$ and that its neighbor to the left has conormal $(-1,0)$ and to the right  has conormal $(1,k)$; cf. \cite[\S2.7]{KKP}.}
     is odd and negative,
and that it is {\it short} if its affine length is at most half that of its neighbors.

\begin{figure}[htbp] 
   \centering
   \includegraphics[width=4in]{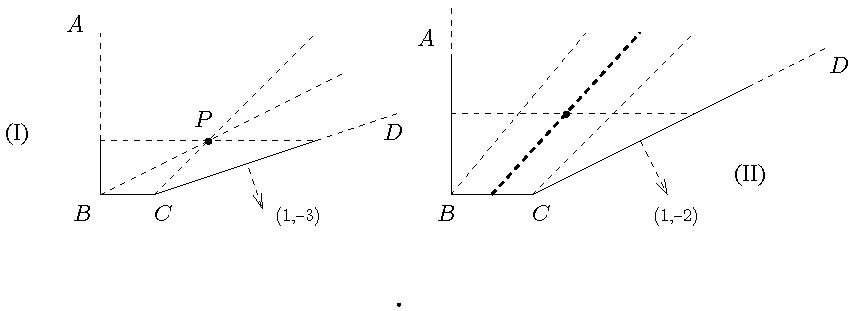} 
   \caption{In (I), $BC$ has self-intersection $-3$. $P$ is the midpoint of the line parallel to $BC$ and a distance $|BC|$ above it. It is not displaceable because the two probes from $BC$  with good (i.e. integrally transverse) direction vectors have initial points at vertices.
Figure (II) illustrates the case when $BC$ has self-intersection $-2$; the heavy line consists of points midway between $AB$ and $DC$.  (This line is integrally transverse to $BC$ because we are in the even case.) Points not on this line can be displaced by horizontal probes, while points on this line that are close to $BC$ can be displaced  by probing  from $BC$ parallel to it.}
   \label{fig:8}
\end{figure}

\begin{lemma}\labell{le:inacc}  Let $A,B,C,D$ be four neighboring vertices on a smooth polygon such that the edge $\vareps=BC$ of affine length $d$  is  short and odd.
Then no probe displaces the midpoint $P$ of the line parallel to $\vareps$ and a distance $d$ above it.
\end{lemma}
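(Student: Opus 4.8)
The plan is to reduce to an explicit computation by choosing adapted integral coordinates. Following the self-intersection conventions in the statement, I would first apply an integral affine transformation so that the short odd edge $\vareps = BC$ has outward normal $(0,-1)$, so that $\vareps$ lies in the line $\{x_2 = c\}$ for some constant, with $\De$ lying above it. After a further translation I may take $\vareps$ to run along $\{x_2=0\}$ with $B$ and $C$ on the $x_1$-axis. The smoothness of the polygon at $B$ and $C$ then forces the conormals of the two neighboring edges $AB$ and $CD$ to be $(-1,0)$ and $(1,k)$ respectively (up to swapping left/right), where $k$ is the self-intersection number of $\vareps$, which by hypothesis is odd and negative. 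Writing $d$ for the affine length of $\vareps$, the point $P$ is then the midpoint of the segment parallel to $\vareps$ at affine height $d$, i.e. $P$ has second coordinate equal to $d$ (in the affine-length normalization, where $\eta=(0,-1)$ is primitive this is literally $x_2=d$), and first coordinate equal to the average of the two endpoints of the slice $\De\cap\{x_2=d\}$.

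**Enumerating the candidate probes.**
The key point is that a probe displacing $P$ must have an integral direction vector $\la$ that is integrally transverse to whichever facet it enters, and $P$ must lie strictly less than halfway along it. I would argue that only very few directions are even combinatorially available. A probe entering through $\vareps=BC$ must have $\la$ integrally transverse to $\vareps$, i.e. $\langle\la,(0,-1)\rangle=\pm1$, so $\la=(m,1)$ for some integer $m$; transversality and the requirement that the probe reach height $d$ then pin down the two relevant directions, and the oddness of $k$ is exactly what forces the initial points of these two good (integrally transverse) probes from $\vareps$ to land at the \emph{vertices} $B$ and $C$ rather than in $\intt\vareps$ — this is the crux of the obstruction, since Lemma~\ref{le:probe} requires $w\in\intt F$. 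I would then rule out probes entering through the neighboring facets $AB$ and $CD$ using the \emph{short} hypothesis: because $\vareps$ has affine length at most half that of each neighbor, a probe coming in transversally through $AB$ or $CD$ either does not reach $P$ at all, or reaches it only past its own midpoint, so it fails the halfway condition of Lemma~\ref{le:probe}. Probes entering through facets farther away are excluded because their direction vectors cannot be simultaneously integrally transverse and aimed at $P$ from within the interior of a facet, given the height $d$ of $P$.

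**The main obstacle.**
I expect the delicate step to be the oddness argument: showing that for $\vareps$ with odd negative self-intersection, \emph{every} integrally transverse direction from $\vareps$ that passes through $P$ exits (or originates) at a vertex, whereas in the even case (illustrated in Figure~\ref{fig:8}(II)) there is an extra transverse direction — parallel to $\vareps$ itself — that originates in $\intt\vareps$ and does displace nearby points. Concretely, with $\vareps$ on $\{x_2=0\}$ and the neighboring conormals $(-1,0)$ and $(1,k)$, the two integrally transverse lattice directions that could carry a probe up to height $d$ and through the midpoint $P$ are determined by $k$; when $k$ is odd the midpoint $P$ has a half-integer first coordinate relative to the lattice translate governing these probes, which is precisely what pushes their feet onto $B$ and $C$. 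I would make this computation explicit using the affine length $d$ and the coordinates above, then conclude that since no probe with foot in $\intt F$ can have $P$ strictly less than halfway along it, $P$ is not displaceable by any probe. This establishes the lemma. \QED
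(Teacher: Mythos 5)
Your overall strategy is the same as the paper's — normalize integral coordinates, sort probes by their entering facet, and show that the only integrally transverse probes from $\vareps=BC$ through $P$ have their feet at the vertices $B$ and $C$ — and you correctly identify that parity argument as the crux. But your parity statement is backwards. With $B=(0,0)$, $C=(d,0)$ and self-intersection $-(1+2k)$ (so the conormal of $CD$ is $(1,-(1+2k))$), the slice at height $d$ runs from $(0,d)$ to $(2d(k+1),d)$, so $P=(d(k+1),d)$: its first coordinate is an \emph{integer} multiple of $d$, and a probe in direction $(m,1)$ through $P$ has foot $(d(k+1-m),0)$, which therefore lands on a lattice point of $BC$, i.e.\ on $B$ or $C$. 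A half-integer multiple of $d$ is what occurs in the \emph{even} case and is exactly what places a foot at the interior midpoint of $BC$; so if your computation really produced a half-integer it would prove the opposite of what you want. This is fixable, but it indicates the computation was not actually carried out.

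The more substantive gap is your treatment of probes not entering through $BC$: you have the roles of the hypotheses reversed. Shortness of $\vareps$ relative to $AB$ gives no control over where $P$ sits along a probe from $AB$. The working argument is that such a probe has direction $(1,a)$, $a\in\Z$, so its entry point lies at affine distance $d(k+1)$ from $P$ (the horizontal displacement); for the foot to lie on $AB$ one needs $a\le 0$, whence the probe exits through $BC$ or $CD$, and by Lemma~\ref{le:affd} the remaining affine distance is at most $\ell_{F}(P)\le d(k+1)$ — so $P$ is at least halfway along and Lemma~\ref{le:probe} does not apply. No shortness is used there. Conversely, for facets other than $AB,BC,CD$ there is no ``transversality obstruction'' of the kind you invoke: an edge beyond $A$ or $D$ can perfectly well emit an integrally transverse probe through $P$ from the interior of that edge. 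Such probes fail only because $\vareps$ is short, which forces the entering facet to be far enough from $P$ that the probe meets $P$ at least halfway along its length. Until this bookkeeping is repaired — the midpoint/affine-distance estimate for the adjacent facets, and the short hypothesis for the distant ones — the case analysis does not close up.
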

\begin{proof} Suppose without loss of generality that $d=1$. Choose coordinates so that $B$ is at the origin and $A,C$ are on the $y,x$-axes respectively as in Figure \ref{fig:8}.  Then the self-intersection condition implies that
the normal to $CD$ is $(1,-(1+2k))$ for some integer $k\ge 0$.  The horizontal distance from $P$ to $BA$ is $k+1$.  Since this is an integer,  the only probes through $P$ that start on $BC$ have initial vertex at $B$ or $C$.
Therefore $P$ cannot be displaced by such probes.  But it also cannot be displaced by probes starting on
$AB$ since these must have direction $(1,a)$ for some $a\in \Z$. 
 By symmetry, the same argument applies to probes from $CD$. Finally note that because $BC$ is short, all probes starting on edges other than $AB, BC$ or $CD$ meet $P$ at least half way along their length
 and so cannot displace $P$.
\end{proof}

Recall from the end of  Section \ref{s:intro}
that $\De(\ka)$ is said to be {\it accessible} if all its points except for $v_0$ are displaceable by probes.

\begin{cor} \labell{cor:inacc} The only smooth
 polygons $\De$ such that $\De(\ka)$  is accessible for all $\ka$ are triangles, and trapezoids with no odd edges.
\end{cor}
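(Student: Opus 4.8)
The whole argument turns on Lemma \ref{le:inacc}, which says that a short odd edge forces an undisplaceable point, together with the observation that a smooth polygon carries an odd edge (odd and negative self-intersection, in the sense of that lemma) if and only if it is neither a triangle nor a trapezoid with no odd edge. Note that the self-intersection of an edge is determined by the fan, hence by the combinatorial type alone, so whether $\De$ has an odd edge does not depend on $\ka$. The plan is to prove the two implications separately: first, that any $\De$ possessing an odd edge fails to be accessible for a suitable $\ka$; and second, that triangles and trapezoids with no odd edge are accessible for every $\ka\in\Cc_\De$.

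For the first implication I would begin by locating the odd edge. If $\De$ has $N=3$ facets it is a triangle, and if $N=4$ it is one of the Hirzebruch trapezoids $F_k$ (the only smooth complete toric surfaces with four rays), whose two parallel edges have self-intersections $-k$ and $+k$; hence a quadrilateral that is not an even trapezoid has $k$ odd and so has an odd edge. If $N\ge5$ then $\De$ is not a minimal toric surface, so by the structure theory of smooth complete toric surfaces (each is an iterated equivariant blow-up of $\CP^2$ or some $F_k$) it carries a torus-invariant $(-1)$-curve, that is, an edge of self-intersection $-1$, which is odd. In every non-excluded case $\De$ thus has an odd edge $\vareps$. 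I would then move $\ka$ inside $\Cc_\De$ so as to make $\vareps$ short: translating the facet carrying $\vareps$ toward the point where its two neighbouring facets would meet shrinks $\vareps$ while lengthening the neighbours (and when those neighbours are parallel one instead brings them closer together), stopping well before the combinatorial type degenerates. Lemma \ref{le:inacc} then produces a point $P$, at affine distance $d=$(affine length of $\vareps$) from $\vareps$, that no probe displaces. Finally one checks $P\ne v_0$: since $\ell_{\vareps}(v_0)\ge S_1$ while $\ell_{\vareps}(P)=d$, it suffices to shrink $\vareps$ until $d<S_1$, which is possible because $S_1$ stays bounded away from $0$ as long as the bulk of $\De$ is not shrunk. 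Hence $\De(\ka)$ is inaccessible.

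For the converse, a triangle is a simplex, so $\De(\ka)$ is accessible for all $\ka$ by the remark in the introduction on products of simplices. For an even trapezoid $\De=F_{2m}$ I would choose coordinates in which the legs are $D_1=\{x\le\ka_1\}$ and $D_3=\{-x+2my\le\ka_3\}$ and the parallel edges are $\{y\le\ka_2\}$ and $\{-y\le\ka_4\}$. A point $u$ off the affine midline $\{\ell_1=\ell_3\}$ is strictly closer (in affine distance) to one leg, hence lies less than halfway along the horizontal probe issuing from that leg in direction $(\mp1,0)$, which is integrally transverse to both legs; so $u$ is displaceable by Lemma \ref{le:probe}. The midline has direction $(m,1)$, which is integral precisely because $k=2m$ is even, and $(m,1)$ is integrally transverse to both parallel edges (the inner products with $(0,\pm1)$ equal $\pm1$). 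Hence a point on the midline other than its affine midpoint lies in one half of the midline segment and is displaced by the probe issuing from the nearer parallel edge in direction $\pm(m,1)$. Thus every interior point except the midpoint of the midline is displaceable; since $v_0$ is undisplaceable by Lemma \ref{le:v0}, it must coincide with that midpoint, and $\De(\ka)$ is accessible for every $\ka$.

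The main obstacle is the even-trapezoid step: one must check that the horizontal and midline probes are genuinely available for all $\ka\in\Cc_\De$, in particular that their initial points lie in the interiors of the relevant facets, handling the degenerate configurations in which the midline meets a parallel edge near a vertex by a limiting or perturbation argument; the evenness of $k$ enters exactly in making the midline direction integral. The remaining bookkeeping, namely arranging a short odd edge inside $\Cc_\De$ and ensuring the resulting undisplaceable point is distinct from $v_0$, is routine once the structure theorem for toric surfaces has been invoked to produce the odd edge.
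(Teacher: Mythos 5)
Your proof is correct and follows essentially the same route as the paper: shorten an odd edge (guaranteed by the toric-surface blow-up structure, or directly by the $-k$ edge of an odd Hirzebruch trapezoid) and apply Lemma \ref{le:inacc} for inaccessibility, and use horizontal probes off the midline plus midline-direction probes (integral precisely in the even case) for even trapezoids. You supply some details the paper leaves implicit, notably the check that the inaccessible point $P$ differs from $v_0$ and the explicit integral transversality of the direction $(m,1)$, but the argument is the same.
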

\begin{proof} As is illustrated in diagram (II) in Figure \ref{fig:8}, the argument in Lemma~\ref{le:inacc} does not apply to even edges since then the line of midpoints is a good direction from $BC$.  It follows easily that every trapezoid without an odd edge has only one nondisplaceable point.  
  Every other smooth polygon with at least $4$ sides can be obtained by blowup from the triangle or a trapezoid\footnote
  {
  This is well known; see Fulton \cite[\S2.5]{F}, or \cite[Lemma~2.16]{KKP} where the blowup process is called \lq\lq corner chopping".}
   and so has an edge $\vareps$ of self-intersection $-1$, the result of the last blow up.  Clearly, this edge  can be made  short.
\end{proof}

Denote by $\ND_p\subset \De$ the set of points $u\in \intt\De$ that are not displaceable by probes, and by
 $\ND_{HF}\subset \De$ the set of points $u\in \intt\De$ for which $HF_*(L_u,\chi)\ne0$ for some $\chi$.

  \begin{figure}[htbp] 
     \centering
    \includegraphics[width=4in]{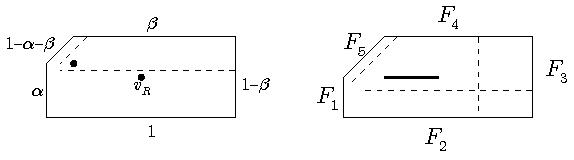} 
     \caption{Some possibilities for $\ND_{HF}$ when $\De$ is a 
     $2$-point blow up of $\C P^2$.  Here  $\ND_p $  is depicted by the dark dots and heavy lines; the dotted lines show permissible directions of probes.}
     \label{fig:probe0}
  \end{figure}
 
\begin{example}\labell{ex:2blow}\rm
 Let $\De$ be the moment polytope of
  a $2$-point blow up of $\C P^2$ as in Figure  \ref{fig:probe0}.
Then the three consecutive edges $F_4, F_5, F_1$ are odd. 
Denote their affine lengths by $L(F_i)$.   We normalize the lengths of the edges of the triangle $T$ formed by $F_2,F_3, F_5$ to be $1$ and denote $\al: = L(F_1), \be: = L(F_4)$, so that
$L(F_5) = 1-\al-\be$.  Without loss of generality, we assume that $\al\le \be$.  We denote by $v_T$ the center of gravity of the triangle $T$ and by $v_R$  the center of gravity of the rectangle $R$
with facets $F_1,\dots, F_4$. 

The first question is: where is $v_0$?
If 
$L(F_5)\le \be$ 
(as in both cases of Figure \ref{fig:probe0}),  then one can check that $v_0 = v_R$.  In this case, one should think of $\De$ as the blow up of
the rectangle $R$.  On the other hand, if 
$L(F_5) = 1-\al-\be > \be = L(F_4) (\ge \al)$ then
$v_R$ can be displaced from $F_5$, and
one should think of $\De$ as the blow up of the triangle $T$.
 If in addition $(\al \le )\be \le \frac 13$, then $v_T$ cannot be displaced from $F_4$ since it is at least as close to $F_2$ as to $F_4$, and it follows that $v_0 = v_T$.   However, if $1-\al-\be > \be>\frac 13$,
 then  $v_T$ can be displaced from $F_4$. 
 One can check in this case that $v_0$ is on the median of $T$ through
 the point $p$ where the prolongations of $F_3$ and $ F_5$ meet, half way between the parallel edges $F_2$ and $F_4$.

Now consider the other points in  $\ND_p$.   We will say that an odd edge is {\it short enough} if 
it is shorter than its odd neighbors and has at most half the length of its even neighbors.  
 Because $\De$ has so few edges, one can check that the statement in
  Lemma \ref{le:inacc} holds for all short enough edges in $\De$.
  
  Throughout the following discussion 
we assume that $\al\le \be$.  As $\al,\be$ vary,
 precisely one of the following cases occurs.
 \MS
 
 \NI (i)  $L(F_5)<L(F_1)$. If $F_5$  is short, then as  in the left hand diagram in Figure \ref{fig:probe0}, $\ND_p$ consists of two points, namely $v_0$ (which coincides with $v_R$) and the point $P$ corresponding to the short edge.  
An analogous statement continues to hold as long as  $L(F_5) < L(F_1)
(\le L(F_4))$, i.e. as long as $F_5$ is short enough: the proof of  Lemma \ref{le:inacc} shows that $P$ cannot be displaced from the facets $F_4, F_5$ or $F_1$ and it cannot be displaced from $F_2$ or $F_3$ because they are too far away. Therefore $\ND_p$ consists of $P$ and $v_0=v_R$, as in the left hand diagram in Figure \ref{fig:probe0}.
 \SSS
 
 \NI (ii) $L(F_5) = L(F_1)< L(F_4)$.  Now there are no short edges and $\ND_p$ is an interval, with $v_0=v_R$ as its \lq\lq middle" end point;
 cf. the right hand diagram in Figure \ref{fig:probe0}.\SSS
 
  \NI (iii) $L(F_5) = L(F_1)= L(F_4) = \frac 13$ (the monotone case). Again there are no short edges; $\ND_p$ is the single point $v_0=v_R=v_T$.\SSS
 
   \NI (iv)  $L(F_4)\ge L(F_5)> L(F_1)$.   Note that
    $L(F_2) = L(F_4) + L(F_5) > 2 L(F_1)$.  Hence,  
  $F_1$ is short enough and  $\ND_p$ consists of $v_0=v_R$ and the point $P$ corresponding to $F_1$.
  \SSS
  
  \NI (v)  $L(F_5)>L(F_4)\ge L(F_1)$. 
  As we saw above, the position of $v_0$ varies depending on the relative sizes of $L(F_4) = \al$ and $\frac 13$.  Further  $F_1$ is always short enough, while $F_4$ may or may not be.  Correspondingly,  $\ND_p$ consists of two or three points.
   \end{example}

  
This example was discussed in  detail in
\cite[Examples~5.7,~10.17,~10.18]{FOOO1} and in \cite[\S5]{FOOO2}, where the authors
showed that $\ND_{HF} = \ND_p$ in all the above cases.
On the other hand, in \cite[Examples~8.2]{FOOO1} the authors 
calculated Floer homology groups in the 
case of  Hirzebruch surfaces and,
in the  case  when the negative curve has self-intersection $-k\le -2$, appear to find
only 
{\it one} point $u\in \ND_{HF}$ (with $4$ corresponding
 deformation parameters $y$).
 In other words, the inaccessible point
 $P$ described in Lemma 4.1 when $k$ is odd is not in $\ND_{HF}$.  
 This seems to be the simplest example where the two sets are different.\footnote
 {
 Of course, this point might be detected by  more elaborate versions of Floer homology.}

It is shown in \cite[\S10]{FOOO1} that if one moves the facets of $\De$ to be in general position (so that the Landau--Ginzburg potential function is nondegenerate),
then $\ND_{HF}$ is finite.  We now show that $\ND_p$ sometimes
 contains an open subset.

\begin{figure}[htbp] 
   \centering
\includegraphics[width=5in]{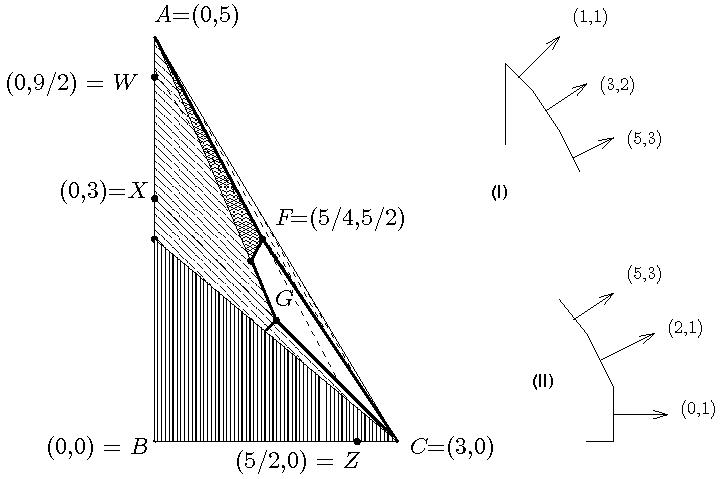} 
   \caption{The shaded regions in the triangle $ ABC$ can be displaced by probes parallel to the shading; the heavy lines and open region cannot be so displaced.  Here $G=(3/2,3/2)$ is the midpoint 
   of $CX$, while $F=(5/4,5/2)$ is the midpoint of $AZ$. 
   Figures (I) and (II) show the cuts needed to smooth the vertices $A$ and $C$.}
   \label{fig:2}
\end{figure}

\begin{lemma}\labell{le:open}  There is a $2$-dimensional smooth polytope with an open set of points that are not displaceable by probes.
\end{lemma}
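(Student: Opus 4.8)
The plan is to prove the lemma by exhibiting one explicit polytope and checking directly that a suitable open region lies entirely in $\ND_p$. I would take the smooth polygon drawn in Figure \ref{fig:2}: a triangle $ABC$ with the two corners at $A$ and $C$ chopped off in the smooth (``corner-chopping'') manner recalled in the proof of Corollary \ref{cor:inacc}, so that $\De$ becomes a smooth pentagon. The coordinates are fixed as in the figure, with the two new short edges positioned so that their midpoints $F=(5/4,5/2)$ and $G=(3/2,3/2)$, together with the long edges of the triangle, bound a definite open region $R$ in $\intt\De$. The goal is then to show $R\subset\ND_p$, and, since $R$ is open, this is exactly the assertion of the lemma (and of Proposition \ref{prop:notdisp}).

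The verification is a finite, facet-by-facet analysis, and it relies on two distinct failure mechanisms for probes. First, for the two long edges of the triangle I would use the half-length condition: every point of $R$ lies more than halfway along any probe entering from these edges. Here one bounds the affine distance from $R$ to the exiting facet \emph{from above} by $\ell$ of that facet using Lemma \ref{le:affd}, and the total affine length of the probe \emph{from below}, so that Lemma \ref{le:probe} never applies. Second, for the edges adjacent to $R$ (the base and the two short chopped edges) I would use the integrality mechanism isolated in Lemma \ref{le:inacc}: since $\De$ is a lattice polytope and a transverse direction $\la$ is integral, a probe through a point of $R$ in direction $\la$ is forced to enter the edge at a point whose position along the edge differs from a vertex by an \emph{integer} affine distance. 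The edge lengths are chosen (this is what dictates the precise cuts in Figures (I) and (II)) so that, for every admissible $\la$, such an entry point falls at a vertex or off the edge altogether, never in the relative interior of the facet, so again Lemma \ref{le:probe} does not apply.

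The main obstacle is organizing this so that \emph{all} of the infinitely many integrally transverse directions at each edge are excluded simultaneously. Recall that for an edge with primitive normal $\eta$, integral transversality forces $|\langle\la,\eta\rangle|=1$, so along a horizontal edge the admissible directions form the infinite family $(c,\pm1)$, $c\in\Z$; the dangerous ones are those of small slope, which could in principle reach interior points cheaply. The delicate part of the argument is to arrange the geometry of $R$ and the integer spacing of entry points so that for every such $c$ the probe either enters at a vertex or overshoots the midpoint, and to confirm that the remaining (steeply sloped) directions are ruled out by the half-length bound. Once $R$ is checked against every facet and every admissible $\la$ in this way, no probe displaces any point of $R$, which completes the proof.
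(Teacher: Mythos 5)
Your first mechanism (bounding the probe length via Lemma \ref{le:affd} when the probe exits through a facet that is not integrally transverse to its direction, so that every point of $R$ sits more than halfway along) is sound, and it is in fact the engine of the paper's argument (cf.\ Remark \ref{rmk:FH}). The genuine gap is in your second mechanism. The integrality argument of Lemma \ref{le:inacc} protects a single, specially chosen point $P$: for that $P$ the entry point $w=P-t\la$ of each candidate probe happens to land on a vertex because $P$ lies at integer affine distance from the relevant walls. It cannot protect an open set. If $\la$ is integrally transverse to a facet $F$ and for some $u_0\in R$ the backward ray in direction $-\la$ meets a vertex of $F$, then as $u$ ranges over a neighborhood of $u_0$ the entry points $w(u)$ sweep out an open subset of $F$ and hence lie in $\intt F$ for an open set of $u$; the only thing that can then save those $u$ is the half-length condition of Lemma \ref{le:probe}, not the location of $w(u)$. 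So the claim that ``such an entry point falls at a vertex or off the edge altogether, never in the relative interior'' fails for all but a measure-zero subset of any open $R$. The base edge and the two chopped edges must therefore also be handled by length estimates (or by showing no probe in that direction reaches $R$ at all because the entering facet is not integrally transverse to $\la$), not by integrality of entry points.

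Your route is also genuinely different from the paper's, and the difference matters for your ``main obstacle.'' The paper does not verify the smooth pentagon facet by facet: it first exhibits the open non-displaceable region inside the \emph{non-smooth} triangle $ABC$, where only the three original edges need analysis (all directions $(c,\pm1)$ from $AB$ and $BC$ are killed at once because they exit through $AC$ non-transversally, and only two directions from $AC$ admit probes at all), and then smooths by chopping $A$ and $C$ with new edges that can be taken arbitrarily short. Probes from a short new edge reach new points only in finitely many directions, and the newly accessible regions have arbitrarily small area, so an open set survives with no direction-by-direction check on the new edges. If you insist on a fixed smooth pentagon and a direct verification, you must replace the integrality mechanism by explicit length estimates for the infinite families $(c,\pm1)$ at each of the five edges; as written, your proposal does not contain the estimates that would close this.
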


\begin{proof} The triangle $ABC$ in Figure \ref{fig:2} has vertices $A=(0,5), B=(0,0)$ and $C=(3,0)$, and so is not smooth.
  The points inside the triangles $ABG$ and $CBG$ can be displaced by probes in the directions $\pm(-1,1)$, and all but  a short segment of $BG$ can be displaced 
 by vertical probes from $BC$.  Also, probes from $AB$ in direction $(1,-2)$ displace the points in $ABF$.
   On the other hand,
the best probes from $AC$ are either parallel to $AZ$ in the direction $(1,-2)$ or are parallel to $CW$ in the direction $(-2,3)$.  
(In fact, the latter set of probes displaces no new points.) 
Therefore this triangle contains an open region that cannot be reached by probes.

To get a smooth example, blow up at the vertices $A$ and $C$ along the directions indicated in figures (I) and (II).  Probes starting from these new edges will reach some more points, but these probes must be in one of a finite number of directions.
 (For example, from the edge near $C$ with normal $(1,0)$ one reaches some new points by  probes  in the direction  $(-1,1)$.)  Hence, 
since the new edges can be arbitrarily short, the newly accessible regions can have arbitrarily small area. 
\end{proof}

We leave it up to the reader to construct similar examples in higher dimensions.    Note that the  reason why  one gets an open set of nondisplaceable points is that in the above example  most probes exit through facets that are not integrally transverse to  the direction of the probe.  For example, in the triangle above the horizontal probes from $AB$ exit through $AC$ which is not integrally transverse to $(1,0)$.   Figure \ref{fig:2b} illustrates two more possibilities.

\begin{figure}[htbp] 
   \centering
\includegraphics[width=5in]{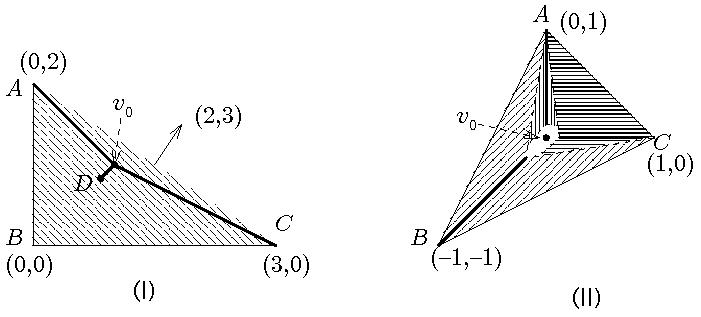} 
   \caption{The shaded regions can be displaced by probes parallel to the shading.  In (I),
 $\ND_p$ consists of  the (open) heavy lines and the points $v_0=(1,1)$ and $D=(\frac67,\frac67)$. 
Points below $D$  on the ray  $v_0D$ can be displaced horizontally from $AB$.  
In (II), $\ND_p$ is a hexagon containing $v_0=(0,0)$ together with parts of the lines $x=y$, $x=0$ and $y=0$. }
   \label{fig:2b}
\end{figure}

\begin{rmk}\label{rmk:FH}\rm   (i) To see why the two sets $\ND_{HF}$ and $\ND_p$ do not always agree, notice that for a point $u$  to be displaceable by  a probe it must be 
\lq\lq geometrically visible" from some nearby facet.  On the other hand,
by \cite[\S11]{FOOO1} as $u$ varies in $\De$ the properties of the Floer homology of $L_u$ are governed by the sizes  $\om(\be_i) = \ell_i(u)$ of the discs of Maslov index $2$ that are transverse to $\Phi^{-1}(F_i)$ and have
 boundary on $L_u$.  These holomorphic discs always exist, no matter where $u$ is in $\De$.   Moreover, according to \cite[\S9]{FOOO1}
 in order for $HF_*(L_u,\chi)$ to be nonzero for some perturbation $\chi$ one needs there to be more than one $i$
for which $\ell_i(u)$ is a minimum.   Therefore the set $\ND_{HF}$ always lies in the polytope $P_1$ defined in \S\ref{ss:v0}.

To illustrate these differences in the example of Figure \ref{fig:2},  choose small $\eps>0$ and make the triangle $\De$ smooth by introducing the new edges
\begin{gather}\notag
F_1: =\{ x_1+x_2 = 5-\eps\},\qquad F_2: = \{3x_1+2x_2=10-\eps\},\\\notag
F_3: =\{ 2x_1+x_2=6-\eps\},\qquad F_4: = \{x_1=3-\eps\}.
\end{gather}
Then it is not hard to see that the calculation of $\inf \{\ell_i(u);u\in \intt \De\}$ is dominated by  the distances to the four facets $\{x_1=0\}$, $\{x_2=0\}$, $F_3$  and  $F_4$; the other facets are simply too far away.  In fact, the set $P_1$ 
defined in \S\ref{ss:v0} is the vertical line segment between the points $\bigl(\frac 12(3-\eps),\frac 12(3-\eps)\bigr)$ and
$\bigl(\frac 12(3-\eps),\frac 12(3+\eps)\bigr)$.  In
 other words, as far as the calculation of $v_0$ is concerned,  our polytope might as well be a trapezoid.
\end{rmk}

\begin{rmk}[Reflexive polygons]\labell{rmk:refl}\rm  
Both triangles  in Figure \ref{fig:2b} are reflexive. (They are
Examples 3 and 6d on the list in \cite[\S4]{Nill}.)
 In (I), the direction $(-1,1)$ is integrally transverse to all facets, so that probes in this direction or its negative displace
all points except for those on certain lines.  Because $B$ is smooth, all points near $B$ can be displaced.  But there are lines of nondisplaceable points near
the nonsmooth vertices $A$ and $C$.  The point $(1,1)$ has 
affine distance $1$ 
from each facet, and so is the central point $v_0$. The line segment $v_0D$ 
is contained in the line $x=y$.  By way of contrast, the triangle (II) does not have one direction that is integrally transverse to  all edges, although each pair of edges has an integrally transverse direction. 
 \end{rmk}


\subsection{The $3$-dimensional case}

This section is devoted to proving the following result.

\begin{prop}\labell{prop:3}  Every $3$-dimensional monotone polytope $\De$ satisfies the star Ewald condition. Hence all its points  except for $\{0\}$ are displaceable by probes.
\end{prop}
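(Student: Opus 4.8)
The plan is to verify the star Ewald condition face by face, exploiting the fact that a simple $3$-polytope has only three kinds of proper faces --- facets, edges, and vertices --- and that in each case the condition unwinds into the existence of a specific symmetric point. First I would fix a vertex $v$ and, as in the proof of Lemma~\ref{le:span}, choose coordinates with $v=(-1,-1,-1)$ and the three facets through $v$ equal to $\{x_i=-1\}$; then $\De\subset\{x_i\ge-1\}$ and, since all support constants equal $1$, we have $\De=\{x:\langle\eta_j,x\rangle\le 1\}$. Because any $\la\in\Ss(\De)$ satisfies $\la,-\la\in\De$, all its coordinates lie in $\{-1,0,1\}$. Unwinding Definition~\ref{def:starE} in these coordinates gives concrete criteria: a facet $F$ is star Ewald as soon as $\Ss(\De)\cap F\ne\emptyset$ (Remark~\ref{rmk:star}(i)); an edge $F_i\cap F_j$ is star Ewald once there is a symmetric point with exactly one of its $i,j$ coordinates equal to $-1$ and the other equal to $0$; and a vertex $v$ is star Ewald precisely when one of $\pm e_1,\pm e_2,\pm e_3$ lies in $\Ss(\De)$, that is, when some edge direction $e_i$ at $v$ satisfies $|\langle\eta_j,e_i\rangle|\le 1$ for every outward normal $\eta_j$.

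The facets and edges I would then dispose of using the stronger Ewald property already available in low dimensions. By {\O}bro's work the strong Ewald condition holds in dimensions $\le 8$, so for every facet $F$ the set $\Ss(\De)\cap F$ contains a lattice basis; in particular $\Ss(\De)\cap F\ne\emptyset$, which settles all facets, and the lemma stated just before the proof of Theorem~\ref{thm:stEmain} then shows that every codimension-$2$ face --- that is, every edge in dimension $3$ --- is star Ewald as well.

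This leaves the vertices, which is where the genuine content lies. The task reduces to the purely combinatorial claim that for each vertex $v$, written in the adapted coordinates above, at least one coordinate $i$ satisfies $|(\eta_j)_i|\le 1$ for all outward normals $\eta_j$. To prove this I would argue by contradiction, assuming that in each of the three coordinate directions some facet carries a normal with an entry of absolute value $\ge 2$, and then extract a contradiction from the local fan structure: following the edges out of $v$ to the adjacent vertices $v+a_ie_i$ and imposing smoothness (unimodularity of the edge bases) together with the vertex-Fano relation $v_k+\sum_i e_{ik}=0$ severely constrains how fast the coordinates of the normals can grow, and in dimension $3$ these constraints cannot be met in all three directions at once. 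I expect this to require organizing the possible local configurations into a short list of cases, and this is exactly the step that does not survive into higher dimensions --- consistent with the star Ewald condition failing at dimension $6$ even though strong Ewald persists through dimension $8$.

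Once all faces are checked, $\De$ is star Ewald, and the displaceability of every point of $\intt\De\less\{0\}$ follows immediately from Theorem~\ref{thm:stEmain}. The main obstacle is the vertex case: unlike facets and edges it is \emph{not} covered by the known Ewald-type results, so the whole weight of the proposition rests on controlling the normals at each vertex, and this is precisely the delicate, dimension-specific part of the argument.
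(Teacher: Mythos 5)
Your unwinding of the star Ewald condition for the three face types is correct, and your treatment of facets and edges does go through: {\O}bro's verification of the strong Ewald condition in dimensions $\le 8$ gives $\Ss(\De)\cap F\ne\emptyset$ for every facet (settling facets by Remark~\ref{rmk:star}(i)), and the lemma preceding the proof of Theorem~\ref{thm:stEmain} then settles every edge. This is a legitimate alternative to the paper's self-contained route, though it imports a computer classification. The genuine gap is the vertex case. You correctly reduce it to the claim that for each vertex, some edge direction $e_i$ satisfies $|\langle\eta_j,e_i\rangle|\le 1$ for \emph{every} outward normal $\eta_j$, and you correctly note that the strong Ewald condition does not imply this (one can have a lattice basis of symmetric points in a facet none of which is $\pm e_i$). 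But you then offer only a plan --- ``argue by contradiction \dots organize the possible local configurations into a short list of cases'' --- rather than an argument. Since, as you yourself say, the whole weight of the proposition rests here, the proof is incomplete. Note also that the claim to be proved is not purely local: it constrains all normals $\eta_j$, including those of facets far from $v$, so an analysis of the fan at $v$ and its neighbouring vertices alone cannot be expected to close it.

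For comparison, the paper's mechanism for the vertex case is this: it introduces the special points $s_{v,F}=v+\sum e_j$ (sum over primitive edge vectors from $v$ lying in $F$), so that $s_{v,F_i}=-e_i$ in your coordinates; it observes that $s_{v,F}\in\De$ unless $F$ is a \emph{small} facet (a unit triangle), and that $-s_{v,F_i}$ is the special point of the facet opposite $F_i$ at $v$, hence also lies in $\De$ unless that opposite facet is small. Lemma~\ref{le:31} then classifies the monotone $3$-polytopes possessing a small facet --- there are exactly two, and at most one facet of any given $\De$ can be small --- and those two polytopes are checked by hand. Some substitute for this classification (or an actual execution of your proposed case analysis) is needed before your proof of the vertex case, and hence of the proposition, can be considered complete.
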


This result is included  in the computer check by Paffenholz that 
shows that all monotone polytopes of dimension $\le 5$ are star Ewald.
  However, we shall give a more conceptual proof to
   illustrate the kind of ideas that can be used to analyze this problem.

We begin with some lemmas. Throughout, we choose coordinates so that 
 $v = (-1,-1,-1)$ is a vertex of $\De$ and so that
 the facets through $v$ are $F_i: = \{x_i=-1\}$.  We shall say that a facet $F$ of $\De$ is {\it small} if it is a triangle with one (and hence all) edges of length $1$.  

  \begin{figure}[htbp] 
     \centering
    \includegraphics[width=3in]{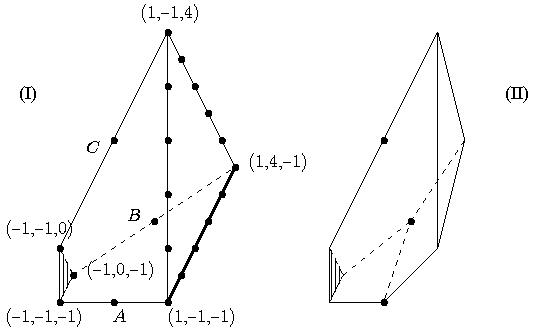} 
     \caption{The truncated pyramid (I) on the left  is a $\De_1$-bundle over $\De_2$ with one small triangular facet; cf. \S\ref{s:bun}.  The integral points on its edges are marked.  Polytope (II) is its monotone blow up along the heavy edge.}
     \label{fig:7}
  \end{figure}

 \begin{lemma} \labell{le:31}If
 $\De$ has a small facet $F$, it is one of the two polytopes illustrated in 
 Fig. \ref{fig:7}.   
 \end{lemma}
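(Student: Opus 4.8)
The plan is to pin down the local structure of $\De$ near the small facet $F$ completely, using the vertex-Fano condition, and then to show that there are only two ways to close up the rest of the polytope.

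First I would normalize. Since $\De$ is monotone we may translate so that $u_0=0$ and every $\ka_i=1$, and then apply an integral affine transformation so that $F\subset\{x_3=1\}$ has outward normal $(0,0,1)$ and vertices $a=(0,0,1)$, $b=(1,0,1)$, $c=(0,1,1)$ (any unimodular triangle in the plane $\{x_3=1\}$ is integrally affinely equivalent to this one). Because $\De$ is simple and $3$-dimensional, each of $a,b,c$ lies on exactly one further edge, transverse to $F$. Writing down the two in-facet edges at each vertex and imposing the vertex-Fano identity $v+\sum e=0$ determines these three transverse edges uniquely: their primitive direction vectors are
\[
d_a=(-1,-1,-1),\qquad d_b=(1,-1,-1),\qquad d_c=(-1,1,-1).
\]
(The third coordinate is forced to be $-1$ by integral transversality to $F$, and the first two coordinates are then fixed by vertex-Fano; smoothness at $a,b,c$ is automatic.)

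Next I would read off the three facets $G_{ab},G_{bc},G_{ca}$ meeting $F$ along its edges. Each contains one edge of $F$ together with the two transverse edges at its endpoints, so its outward normal is determined, and with the monotone normalization one finds
\[
\eta_{G_{ab}}=(0,-1,1),\qquad \eta_{G_{bc}}=(1,1,0),\qquad \eta_{G_{ca}}=(-1,0,1),
\]
each with support constant $1$. The three transverse edges are precisely the pairwise intersections $G_{ab}\cap G_{ca}$, $G_{ab}\cap G_{bc}$, $G_{bc}\cap G_{ca}$, so their far endpoints are the integral vertices $a'=a+m_a d_a$, $b'=b+m_b d_b$, $c'=c+m_c d_c$ for integers $m_a,m_b,m_c\ge 1$. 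A one-line computation shows the three transverse lines all pass through $(\tfrac12,\tfrac12,\tfrac32)$, so $\De$ is a truncated triangular cone (a pyramid with this apex) capped above by $F$ and below by the remaining facets; this is the frustum / $\De_1$-bundle-over-$\De_2$ picture of Figure \ref{fig:7}. I would also note that the configuration carries an $S_3$-symmetry permuting $a,b,c$ by integral linear maps (e.g.\ $x_1\leftrightarrow x_2$ swaps $b,c$), which I will use to reduce the final casework.

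The heart of the matter is to classify the bottom cap. At $a'$ exactly one new facet meets $G_{ab}$ and $G_{ca}$; writing its normal $\nu=(s,t,u)$ (support constant $1$), smoothness against $\eta_{G_{ab}},\eta_{G_{ca}}$ forces $s+t+u=\pm1$, while incidence at $a'$ gives $u-m_a(s+t+u)=1$. I would combine these two relations with the vertex-Fano identity at $a'$ and the analogous data at $b',c'$ to show there are exactly two globally consistent completions: either a single facet caps all three vertices, which forces $\nu=(0,0,-1)$, hence $m_a=m_b=m_c=2$ and the symmetric frustum of Figure \ref{fig:7}(I); or two facets meet the bottom in a new edge, which is precisely the monotone blow-up of that frustum along one of its bottom edges, giving Figure \ref{fig:7}(II) (all three bottom edges being equivalent under the $S_3$-symmetry, so the blow-up is unique up to integral affine equivalence). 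The main obstacle is exactly this step: organizing the finite bottom casework so as to be certain that no further monotone completions exist — in particular ruling out asymmetric single-facet caps and caps by three or more facets (i.e.\ simultaneous blow-ups). The bookkeeping is elementary but must use, at each bottom vertex, the simultaneous constraints of smoothness (determinant $\pm1$), integrality of the endpoints, the support-constant-$1$ normalization, and vertex-Fano, together with the global requirement that the bottom facets fit into a bounded simple polytope.
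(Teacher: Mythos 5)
Your normalization, the computation of the three transverse edge directions $d_a,d_b,d_c$ from the vertex--Fano identity, the normals of the three facets adjacent to $F$, and the relations $s+t+u=\pm1$ and $u-m_a(s+t+u)=1$ at $a'$ are all correct, and up to a change of coordinates this is the same local setup the paper uses. But the lemma's entire content is the step you defer: you assert that ``there are exactly two globally consistent completions'' and then name the verification of exactly that assertion as ``the main obstacle.'' As written this is a gap, not a proof. Worse, the plan does not contain the ingredient that would make the deferred casework finite. Nothing in your local relations at $a',b',c'$ bounds the edge lengths $m_a,m_b,m_c$ or the combinatorics below $F$: the side facets $G_{ab},G_{bc},G_{ca}$ are polygons that could a priori have many vertices between $a'$ and $b'$ (etc.), so the ``bottom cap'' is not a priori a configuration of one, two, or three facets, and ``asymmetric single-facet caps and simultaneous blow-ups'' do not exhaust the configurations you would need to exclude. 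The support-constant-$1$ normalization does pin things down once you know the bottom facet is a single plane (it forces $m_a=2$ there), but it does not by itself rule out a deep or combinatorially complicated bottom.

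The missing idea, which is how the paper closes the argument in a few lines, is an a priori height bound obtained by applying the vertex--Fano condition at an extremal vertex. In your coordinates: let $y$ be a vertex of $\De$ not on $F$ with maximal $x_3$-coordinate. One checks first that such a $y$ must be adjacent to $F$ (otherwise all three edge vectors at $y$ would have $x_3$-component $\le 0$ summing to $-y_3$, forcing them all horizontal and propagating a contradiction), so $y\in\{a',b',c'\}$. Then of the three primitive edge vectors at $y$, the one returning to $F$ has $x_3$-component $+1$ and the other two have $x_3$-component $\le 0$ by maximality, while vertex--Fano forces their $x_3$-components to sum to $-y_3$. Hence $y_3\in\{0,-1\}$: either some first lattice point $a+d_a$, $b+d_b$, $c+d_c$ is already a vertex (which leads to Figure \ref{fig:7}(II)), or $y_3=-1$, the two remaining edges at $y$ are horizontal, and the polytope closes up as the frustum of Figure \ref{fig:7}(I). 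This dichotomy replaces your open-ended ``bookkeeping'' with two short, genuinely finite cases; without it, your proposal does not establish the lemma.
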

 \begin{proof}  We may suppose that the 
vertices  of $F=F_1$ are $v_1=(-1,-1,-1), v_2: = (-1,0,-1)$ and $v_3: = (-1,-1,0)$.  The vertex-Fano condition at $v_2$ implies that the edge through $v_2$ transverse to $F$ must have direction $e_2'=(1,2,0) $ while that
through $v_3$ transverse to $F_1$ must have direction $e_3'=(1,0,2) $.  Therefore $\De\cap \{x_1=0\}$ contains the points  $A:=(0,-1,-1), B: = (0,2,-1)$, and
$ C: = (0,-1,2) $.
\MS
 
  \NI {\bf Claim 1:}   {\it
  If none of $A,B,C$ are vertices, the points $(1,-1,-1), (1,4,-1)$ and $(1,-1,4)$ are vertices of $\De$ and $\De$ is the polytope in part {\rm (I)} of Figure \ref{fig:7}.}
  
  Let $y=(y_1,y_2,y_3)$ be a vertex  on the edges through $A,B,C$ (but not on $F$) with the smallest  coordinate $y_1$.  Without loss of generality we may suppose that $y$ lies on  the edge through  $A$.  By hypothesis, $y_1\ge 1$.  Let $e_{y1}, e_{y2}$  be the primitive vectors along the edges from $y$ that do not go through $A$.  Then the $x_1$-coordinates of the vectors $e_{yi}$ are nonnegative and (by the vertex-Fano condition)  sum to $ -y_1+1\le 0$.
  It follows that  $y_1=1$ and that the vectors $e_{yi}$ lie in the plane $x_1=1$.   It is now easy to see that the polytope must be as illustrated in (I). \MS

\NI {\bf Claim 2:}   {\it
  If at least one  of $A,B,C$ is a  vertex, then $\De$ is the polytope in part {\rm (II)} of Figure \ref{fig:7}.}

Without loss of generality, suppose that
 $A$ is a vertex.  Then, the vertex-Fano condition implies that just one of the
  primitive edge vectors from $A$ has positive $x_1$-coordinate.  In Figure \ref{fig:7} (II) we illustrate the case when this edge vector $e$ lies in $F_2$. Then the  edge through $A$ in $F_3$ ends at $B$, and $e= (1,0,1)$.  It follows that $C$ cannot be a vertex.  Arguing similarly at $B$, we see that
  the slice  $\De\cap \{x_1\in [-1,1]\}$ must equal the polytope (II).  Moreover, 
  if any of the three edges of the slice $\De\cap \{x_1\in (0,1)\}$ have a vertex on $\{x_1=1\}$, then the vertex-Fano condition at this vertex implies that $\De=$ (II).   On the other hand, there must be some vertex of this kind.  
  For if not, we can get a contradiction as in Claim 1 by looking at the vertex  $y$ on these edges with minimal  $x_1$-coordinate.
 \end{proof}
 
 \begin{rmk}\rm  Polytope (II) is the monotone blow up of  (I) along an edge.  Note that to make a monotone blow up along 
an edge $\vareps$ of a monotone $3$-dimensional polytope one must cut out a neighborhood of size $1$.  Thus, there is such a blow up  of $\vareps$
exactly if all edges meeting $\vareps$ have length at least $2$.
Similarly, there is a monotone blow up of a vertex $v$ of a monotone $3$-dimensional polytope 
if all edges meeting $v$ have length at least $3$.  Thus, up to permutation, there is only one monotone blow up of (I).
\end{rmk}

\begin{cor} \label{cor:31}
 A $3$-dimensional polytope can have at most one small facet.
Moreover, if $F$ is small,   $F\cap\De_{\Z} \subset \Ss(\De)$.
\end{cor}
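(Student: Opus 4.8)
The plan is to deduce both assertions directly from the classification in Lemma~\ref{le:31}. If $\De$ has no small facet there is nothing to prove, so suppose $F$ is small and choose coordinates as in that lemma, so that $F = F_1 = \{x_1=-1\}$ has vertices $(-1,-1,-1)$, $(-1,0,-1)$, $(-1,-1,0)$. Then Lemma~\ref{le:31} identifies $\De$ with one of the two explicit lattice polytopes (I), (II) of Figure~\ref{fig:7}, so both claims reduce to a finite inspection of these two models.

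For the first claim I would simply list the facets of each model and check that $F_1$ is the only small triangle. In model (I) the facet opposite $F_1$ lies in $\{x_1=1\}$ and is a triangle all of whose edges have affine length $5$, while the remaining three facets are trapezoids; hence $F_1$ is the unique small facet. Model (II) is obtained from (I) by a single monotone blow up along a ``large'' edge (the intersection of the top triangle with one of the trapezoids). I would record the resulting facets: the exceptional facet is a quadrilateral, the two facets adjacent to the blown-up edge become a triangle of edge length $4$ and a quadrilateral, and the two facets meeting its endpoints become pentagons. So again $F_1$ is the only small facet, and $\De$ therefore has at most one.

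For the second claim, observe that because the edges of $F_1$ have affine length $1$, its only lattice points are its three vertices $v_1=(-1,-1,-1)$, $v_2=(-1,0,-1)$, $v_3=(-1,-1,0)$; thus $F\cap\De_{\Z}=\{v_1,v_2,v_3\}$. To place these in $\Ss(\De)$ it remains to check that the negatives $(1,1,1)$, $(1,0,1)$, $(1,1,0)$ lie in $\De$. In model (I), where $\De=\{x_1,x_2,x_3\ge -1,\ x_1\le 1,\ -2x_1+x_2+x_3\le 1\}$, each of the three points satisfies all five inequalities; and in model (II) there is only one further inequality $\langle\eta,x\rangle\le 1$ cutting off the exceptional facet, which these points also satisfy (for the blow up above, $\eta=(1,0,-1)$, and $x_1-x_3\le 1$ holds at all three). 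Since each $v_i$ is a nonzero lattice point of $\De$ with $-v_i\in\De$, we conclude $F\cap\De_{\Z}\subset\Ss(\De)$.

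The only point requiring care is the bookkeeping for the blow up (II): one must pin down which edge of (I) is blown up, compute the normal and support constant of the exceptional facet, and confirm that neither $F_1$ nor the three symmetric points are altered by the extra inequality. Once the edge is fixed this is routine, and by the remark following Lemma~\ref{le:31} the choice is unique up to permutation, so it suffices to treat a single case.
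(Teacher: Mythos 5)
Your proposal is correct and follows the same route as the paper: Lemma~\ref{le:31} reduces everything to the two explicit polytopes (I) and (II), and both claims are then verified by direct inspection (the paper states this inspection as "obvious"/"by inspection", while you carry it out explicitly, correctly identifying the defining inequalities $-2x_1+x_2+x_3\le 1$ and, for (II), $x_1-x_3\le 1$, and checking the three symmetric points).
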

 \begin{proof}  The first statement is obvious; the second follows by 
inspection.
 \end{proof}

 Given a vertex $v$ in some facet $F$ of $\De$  we
define the {\it special point  $s_{v,F}$ of $v$ in the facet } $F$ to be $v+\sum e_j$ where $e_j$ ranges over the primitive integral vectors along the edges from $v$ that lie in $F$. With our choice of coordinates, we get for $v=(-1,-1,-1)$ 
 the three points  $s_1: = s_{v,F_1}= (-1,0,0), s_2: = s_{v,F_2}=(0,-1,0)$ and $s_3: = s_{v,F_3}=(0,0,-1)$. Note that:\MS
 
 \NI $\bullet$
  $v$ satisfies the star Ewald condition if and only if one of these three points lies in $\Ss(\De)$;\SSS
  
  \NI $\bullet$  $s_{v,F}\in \De$ unless $F$ is  a small facet.
 \MS
 
   Next, given $v\in F_i$ we define {\it the facet  opposite to $F_i$ at $v$}
  as follows:
if $\vareps_i$ is the edge from $v$ transverse to $F_i$, and $z_i$ is the other vertex of $\vareps_i$ then $F_i'$ is the facet through $z_i$ not containing $\vareps_i$.  Consider the special point $s'_i=s_{z_i,F_i'}$.  Then $s_i'=-s_i$ because the vertex-Fano condition implies that $s_i+e_i = 0 = s_i' - e_i$.  Therefore if there is $i$ such that $s_i, s_i'$ are both in $\De$,
 the star Ewald condition is satisfied at $v$.\MS

      \NI {\bf Proof of Proposition \ref{prop:3}.}
  \SSS
  
   \NI {\bf Step 1:} {\it Every vertex of $\De$ satisfies the star Ewald condition.}
   
Without loss of generality, consider the vertex $v=(-1,-1,-1)$ as above.
If neither of $F_1, F_1'$ are small then $s_1\in \Ss(\De)$ and $v$ is star Ewald.  But if one of these facets is small, then  Lemma~\ref{le:31}  implies that
$\De$ is one of the polytopes in  Figure \ref{fig:7} and one can check directly that in these cases  every vertex lies on some non-small facet $F$ whose opposite facet at $v$ is also not small.\MS

  \NI {\bf Step 2:} {\it Every facet $F$ of $\De$ satisfies the star Ewald condition.}

If $F$ is small, then its three vertices
 lie in $\Ss(\De)$ by Corollary \ref{cor:31}.  Otherwise,
let $F_w'$ be the facet opposite to $F$ at some vertex $w\in F$.  
If $F'_w$ is small for all choices of $w$  then 
 $F$ must be the large triangular facet in  the polytope (I) of Figure \ref{fig:7},
 and so it contains points in $\Ss(\De)$ by Corollary \ref{cor:31}.
 The remaining case is when there is $w$ such that 
 $F_w'$ is not small.  But then  $s_{w,F}\in \Ss(\De)$. 
\MS

    \NI {\bf Step 3:} {\it Every edge $\vareps$ of $\De$ satisfies the star Ewald condition.}

If $\De$ has no small facets, we may suppose
that $\vareps= F_2\cap F_3$.
The proof of Step 1 shows that
$s_{v,F_2}= (0,-1,0)\in F_2$ lies in $\Ss(\De)$.   Further, because 
$\starr(\vareps) = \vareps$,  the conditions  $s_{v,F_2}\in 
\istar(\vareps)$ and $-s_{v,F_2}\in \Starr(\vareps)$ are obviously satisfied. 

 On the other hand,  if $\De$ has a small facet, then $\De$ is (I) or (II).
In these cases, one can check that for  each edge of $\De$ at least one of the two facets containing it, say $F$, is not small and 
has an opposite facet $F' $  at some point  $w\in F$ that is also not small.  Therefore, again $s_{w,F}\in \Ss(\De)$ satisfies the star Ewald condition at $\vareps$.

This completes the proof of Proposition \ref{prop:3}.

To extend this kind of argument to higher dimensions, one would have to 
understand 
facets that are small in the sense that they
 do not contain some (or all) of their special points $s_{v,F}$.

\section{Bundles.}\labell{s:bun}

In \cite{OT} Ostrover--Tyomkin construct an $8$-dimensional monotone toric manifold $M_{OT}$ whose quantum homology is not semisimple.
  As pointed out in \cite{FOOO1}  the properties of $QH_*(M)$ are closely related to the nondisplaceable points in $\De$.  Nevertheless, we show that in the case of $M_{OT}$ the special fiber is a stem. The manifold $M_{OT}$ is a toric bundle over $\C P^1\times \C P^1$ with fiber the $3$-point blow up of $\C P^2$.  Hence this example is covered by
Corollary \ref{cor:bun} below.
 
 \begin{figure}[htbp] 
    \centering
    \includegraphics[width=3.5in]{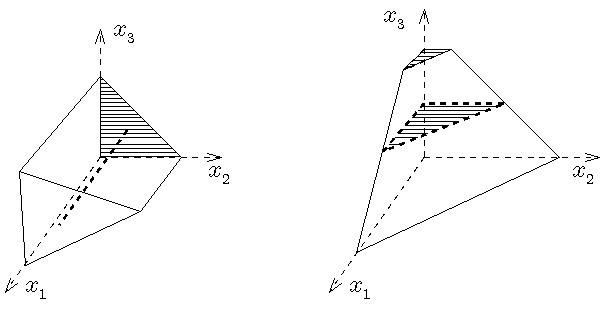} 
    \caption{The polytope (a)
  is a $\De_2$-bundle over $\De_1$, while  (b) is a
  $\De_1$-bundle over $\De_2$.  The shaded facet in (a)  is one of the base facets $\Hat F_i'$ and is affine equivalent to the fiber, while the top shaded
  facet in (b) represents a section of the bundle and is one of 
  the two fiber facets $\Tilde F_j'$.
  The heavy dotted lines enclose the 
  central slice $\Hat\De_0$ described in Lemma~\ref{le:bun}.}
    \labell{fig:6}
 \end{figure}

 Recall that a smooth locally trivial fiber bundle  $\Tilde M\to M\stackrel{\pi}\to \Hat M$ whose total space is a toric manifold $(M,T)$ is said to be a toric bundle if 
the action of $T$ permutes the fibers of $\pi$.  It follows that
 there is a corresponding quotient homomorphism $T\to \Hat T: = T/\Tilde T$ whose kernel $\Tilde T$ induces a toric action on the fiber $\Tilde M$ and whose image $\Hat T$ induces a toric action on the base $\Hat M$.  Because the moment polytope $\De$ lives in the dual Lie algebra $\ft^*$  of $T$,  there is no natural projection map from $\De\subset \ft^*$ to $\Hat \De\subset {\Hat \ft}\,\!^*$.  Rather, 
 the projection $\pi$ induces an inclusion
$\pi^*: {\Hat \ft}\,\!^*\to \ft^*$, 
and the natural projection is $ \ft^*\to \Tilde\ft^*$. 
From now on, we will  
identify $\ft^*$ with ${\Hat\ft}\,\!^*\times {\Tilde\ft}\,\!^* = \R^k\times \R^m$ in such a way that $\R^k=\pi^*({\Hat\ft}\,\!^*)$, and 
will translate $\De$ so that its special point is at $\{0\}$.
 This implies  that $\De$ is obtained by slicing a polytope $\Tilde\De'$ (which is affine isomorphic to 
 $\R^k\times \Tilde\De$ and so has facets parallel  to the first $k$-coordinate directions) by hyperplanes that are in bijective correspondence with the facets of the base $\Hat\De$.  

 Here is the formal definition of bundle.  Note that here we consider the normals to the facets as belonging to the Lie algebra of the appropriate torus.

 \begin{defn}\labell{def:bund} 
Let  $\Hat{\Delta} =
\bigcap_{i = 1}^{\Hat N} \{ x \in \Hat{\ft}^* \mid \langle \Hat{\eta}_i,x
\rangle \leq \Hat{\kappa}_i \}$ and
$\Tilde{\Delta} =
\bigcap_{j = 1}^{\Tilde N} \{ y \in \Tilde{\ft}^* \mid \langle \Tilde{\eta}_j,y
\rangle \leq \Tilde{\kappa}_j \}$ 
be simple polytopes.
We say that a simple 
polytope 
$\Delta \subset \ft^*$
is a {\bf bundle} with {\bf fiber} $\Tilde{\Delta}$ over the {\bf
base} $\Hat{\Delta}$ if 
there exists  a short exact sequence
$$ 
0 \to \Tilde{\ft} \stackrel{\iota}{\to} \ft \stackrel{\pi}{\to} \Hat{\ft} \to 0
$$
so that
\begin{itemize}
\item 
$\Delta$ is combinatorially equivalent to the product $\Hat{\Delta}
\times \Tilde{\Delta}$.
\item 
If $\Tilde{\eta}_j\,\!'$   
 denotes the outward normal to
the  facet 
$\Tilde{F}_j\,\!'$
of $\Delta$ which corresponds to  $\Hat{\Delta} \times
\Tilde{F_j} \subset \Hat{\Delta} \times \Tilde{\Delta}$,
then $\Tilde{\eta}_j\,\!' = \iota(\Tilde{\eta}_j)$ 
for all $1 \leq j\leq \Tilde N.$
\item 
If $\Hat{\eta}_i\,\!'$  
denotes the outward normal to
the  facet 
$\Hat F_i\,\!'$
of $\Delta$ which corresponds to  $\Hat{F}_i \times
\Tilde{\Delta}
 \subset \Hat{\Delta} \times \Tilde{\Delta}$,
then $\pi(\Hat{\eta}_i\,\!') = \Hat{\eta}_i$
for all $1 \leq i \leq \Hat N.$
\end{itemize}
 The facets 
$\Tilde{F}_1\,\!', \ldots, \Tilde{F}_{\Tilde N}\,\!'$ correspond bijectively to the facets of the fiber and 
will be called {\bf fiber facets},  
while the facets 
$\Hat F_1\,\!' \ldots, \Hat F_{\Hat N} \,\!'$ (which 
correspond bijectively to the facets of the base)
will be called {\bf base facets}.
\end{defn}

The fiber facets $\Tilde F_j\,\!'$ are all parallel to the $k$-plane $\pi^*({\Hat\ft}\,\!^*)$ in $\ft^*$ and (when $m>1$) are themselves bundles over
$\Hat\De$.  For each vertex $\Tilde v_\be = \cap_{j\in \be}\Tilde F_j$ of $\Tilde \De$, the intersection 
$$
\bigcap_{j\in \be}\Tilde F_j\,\!'=:\Hat\De_{\Tilde v_\be}
$$ 
of the corresponding fiber facets is an $k$-dimensional polytope,
 that is parallel to $\pi^*({\Hat\ft}\,\!^*)$ and projects to a polytope 
in ${\Hat\ft}\,\!^*$  that is {\it analogous} to $\Hat\De$. In other words
$\Hat\De_{\Tilde v_\be}$ has the same 
normals $\Hat\eta_i$ as $\Hat\De$ but usually {\it different} structure constants $\Hat\ka_i$.
For example, in the polytope (a) in
 Figure \ref{fig:6} the $\Hat\De_{\Tilde v_\be}$ are 
 edges of various lengths that are parallel to the $x_1$-axis, while on the right
they are the top and bottom  triangles. In contrast, it is not hard to see that the faces
$$
\bigcap_{i\in \al}\Hat F_i\,\!'=: \Tilde \De_{\Hat v_\al}
$$
 of $\De$ corresponding to the 
vertices
$\Hat v_\al = \cap_{i\in \al}\Hat F_i$ of the base are all affine equivalent
to the fiber $\Tilde\De$.
 Thus, if a given polytope $\De$ is a $\Tilde\De$-bundle over $\Hat\De$, 
 the fiber  polytope $\Tilde\De$ is completely determined by $\De$ while the base polytope is only determined modulo the structure constants (though these must remain in the same chamber so that the intersection pattern of the facets, i.e. the fan, does not change).

Observe also that the polytope $\De$ is the union of
$k$-dimensional  parallel slices 
\begin{equation}\labell{eq:slice}
\Hat \De_y: =\De\cap (\R^k\times \{y\}),\quad y\in \Tilde\De;
\end{equation}
cf. Figures \ref{fig:6} and \ref{fig:10}.
The following lemma shows that if $\De$ is monotone then so
 is its fiber.  Moreover, as we see in parts (ii) and (iii), 
$\De$  also determines 
 a particular monotone base polytope.
 
\begin{lemma}\labell{le:bun}  Suppose that the monotone polytope $\De$ is a $\Tilde\De$ bundle over $\Hat\De$ with special point $u_0 = 0$.  Then:\SSS

\NI {\rm (i)} The fiber $\Tilde\De$ is monotone.\SSS

\NI {\rm (ii)} For each $y\in \Tilde\De$, the slice $\Hat\De_y$ in equation (\ref{eq:slice}) does not depend on the choice of splitting $\ft^*=
{\Hat\ft}\,\!^*\times {\Tilde\ft}\,\!^*.$  It is integral whenever $y$ is.

\SSS

\NI {\rm (iii)}
The slice 
$$
\Hat\De_0 =\De\cap\bigl(\R^k\times \{0\}\bigr)
$$ 
through the special point of $\Tilde\De$ is monotone.\SSS

\NI {\rm (iv)} For any $w\in \Ss(\Tilde\De)$
the intersection $\Hat\De_{w\R}: = \De\cap\bigl(\R^k\times \{cw:c\in [-1,1]\}\bigr)$ is a (smooth, integral) $\De_1$-bundle over $\Hat\De$ and is monotone.
\end{lemma}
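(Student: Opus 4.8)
The plan is to exhibit $\Hat\De_{w\R}$ as a genuine linear section of $\De$, write out its facet inequalities, and read off both the bundle structure and monotonicity from them. Write $\iota^*:\ft^*\to\Tilde\ft^*$ for the projection dual to $\iota$, so that the fiber slices are $\Hat\De_y=\De\cap(\iota^*)^{-1}(y)$, and set $V:=(\iota^*)^{-1}(\R w)$, a $(k+1)$-dimensional subspace of $\ft^*$ containing $\pi^*(\Hat\ft^*)=\R^k$. First I would pin down the fiber using part (i): since $\Tilde\De$ is monotone it is cut out by $\langle\Tilde\eta_j,y\rangle\le 1$, and for $w\in\Ss(\Tilde\De)$ the integers $a_j:=\langle\Tilde\eta_j,w\rangle$ satisfy $|a_j|\le 1$, hence $a_j\in\{-1,0,1\}$. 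Thus $\R w\cap\Tilde\De=\{cw:ca_j\le 1\ \forall j\}=[-w,w]$, the endpoints $c=\pm1$ coming from facets with $a_j=\pm1$ (at least one of each sign occurs, since $\pm w\in\partial\Tilde\De$). Moreover $w$ is primitive: if $w=dw_0$ with $d\ge 2$ and $w_0$ integral, then $w_0=(1/d)w\in(0,w)\subset\intt\Tilde\De$ would be a second interior lattice point, contradicting monotonicity. Since every point of $\De$ projects into $\Tilde\De$ under $\iota^*$, this gives the clean identity $\Hat\De_{w\R}=\De\cap V$: it is a linear section, cut out by the original facets of $\De$ restricted to $V$, with no artificial cuts at $c=\pm1$.

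Next I would pass to coordinates $(x,c)\in\Hat\ft^*\times\R$ on $V$ via $(x,c)\mapsto\pi^*x+cw$ (writing $w$ also for its integral lift); by primitivity of $w$ the induced lattice is $\Hat\ft^*_{\Z}\times\Z$. Restricting the facets of $\De$, all of which have support constant $1$, the fiber facets contribute exactly $-1\le c\le 1$, while each base facet $\Hat F_i\,\!'$ gives $\langle\Hat\eta_i,x\rangle+ct_i\le 1$, where $t_i:=\langle\Hat\eta_i\,\!',w\rangle\in\Z$ and I have used $\langle\Hat\eta_i\,\!',\pi^*x\rangle=\langle\pi(\Hat\eta_i\,\!'),x\rangle=\langle\Hat\eta_i,x\rangle$. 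This displays $\Hat\De_{w\R}$ as a $\De_1$-bundle over the base $\Hat\De_0$ of part (iii): the sequence is $0\to\R\to\Hat\ft\times\R\stackrel{\pi_1}{\to}\Hat\ft\to 0$, the two fiber-facet normals are $\pm(0,1)=\iota_1(\pm1)$, and each base-facet normal $(\Hat\eta_i,t_i)$ satisfies $\pi_1(\Hat\eta_i,t_i)=\Hat\eta_i$, as Definition \ref{def:bund} demands.

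Smoothness I would check vertex by vertex. A slice argument shows no vertex can occur at $-1<c<1$: there no fiber facet is active, so a vertex would force $k+1$ base facets through a single point of the slice $\Hat\De_{cw}$, which is impossible since that slice is analogous to the simple $k$-polytope $\Hat\De_0$. Hence every vertex lies over a vertex $\Hat v_\al$ of $\Hat\De_0$ at $c=\pm1$, where the facet normals are $(0,\pm1)$ together with $(\Hat\eta_i,t_i)$, $i\in\al$. Expanding the determinant along the fiber row gives $\pm\det(\Hat\eta_i)_{i\in\al}=\pm1$ by smoothness of $\Hat\De_0$, so the normals form a lattice basis. Integrality of these vertices follows from part (ii) applied to the integral slices $\Hat\De_{\pm w}$, and both simplicity and the combinatorial product structure $\Hat\De_0\times\De_1$ come out of the same vertex analysis.

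Monotonicity is then immediate from Remark \ref{rmk:inter}(i): $\Hat\De_{w\R}$ is simple, smooth and integral by the above, it contains $0$ in its interior (every inequality is strict at $0$, since all constants equal $1>0$), and each facet has primitive normal with support constant $1$ --- the fiber normals $(0,\pm1)$ trivially, and $(\Hat\eta_i,t_i)$ because $\Hat\eta_i$ is already primitive. The step I expect to be the real obstacle, and the place where monotonicity of the fiber (part (i)) is indispensable, is establishing the primitivity of $w$ together with $\R w\cap\Tilde\De=[-w,w]$: these are exactly what force the new fiber to be the monotone $1$-simplex $[-1,1]$ with support constants $1$, rather than a longer interval carrying interior lattice points that would wreck both smoothness and the normalization $\ka_i=1$.
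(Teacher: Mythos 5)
Your write-up proves only part (iv) of the lemma. Parts (i), (ii) and (iii) are not proved but are used as inputs throughout: you ``pin down the fiber using part (i)'', deduce integrality of the vertices ``from part (ii) applied to the integral slices $\Hat\De_{\pm w}$'', and take the base of your $\De_1$-bundle to be ``the base $\Hat\De_0$ of part (iii)'', relying on its smoothness and monotonicity. None of these three statements is a triviality: (i) requires splitting the primitive edge vectors at a vertex $v_{\al\be}$ of $\De$ into the $k$ horizontal ones lying in $\R^k\times\{\Tilde v_\be\}$ and the $m$ ones projecting onto the edge vectors of $\Tilde\De$, and then projecting the vertex-Fano identity $v_{\al\be}+\sum e =0$ to $\Tilde\ft^*$; (ii) requires exhibiting the vertex $\Hat\De_y\cap\Tilde\De_{\Hat v_\al}$ of the slice as the integral point $v_{\al\be}+\sum k_j e_j^{\al\be}$; and (iii) requires identifying the vertices of $\Hat\De_0$ as the points $w_{\al\be}=v_{\al\be}+\sum_{j\in\be}e_j^{\al\be}$ and checking vertex-Fano for them. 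Since your argument for (iv) genuinely depends on all three (monotonicity of $\Tilde\De$ to get $a_j\in\{-1,0,1\}$ and the primitivity of $w$; integrality of $\Hat\De_{\pm w}$; smoothness of $\Hat\De_0$ for the determinant computation), the proposal as it stands establishes only a quarter of the statement and the rest must be supplied.

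The part you did write is correct, and is in several respects more explicit than the paper's own treatment of (iv), which declares the $\De_1$-bundle structure ``geometrically clear'' and then verifies vertex-Fano by summing edge vectors at the two vertices $v_{\pm\al}$ over each $\Hat v_\al$. Your route instead restricts the facet inequalities of $\De$ (all with support constant $1$) to the $(k+1)$-plane $V=(\iota^*)^{-1}(\R w)$, which yields the presentation $-1\le c\le 1$, $\langle\Hat\eta_i,x\rangle+ct_i\le 1$ and lets you read off the bundle structure, smoothness, integrality and the normalization $\ka=1$ in one pass. The observations that $w\in\Ss(\Tilde\De)$ forces $|\langle\Tilde\eta_j,w\rangle|\le 1$, that $w$ is primitive (else a second interior lattice point of $\Tilde\De$ would appear on the open segment $(0,w)$), and that consequently $\R w\cap\Tilde\De=[-w,w]$ so that $\Hat\De_{w\R}$ is an honest linear section $\De\cap V$ with fiber exactly $[-1,1]$, are genuinely useful points that the paper leaves implicit; they are exactly what rule out a longer interval fiber with interior lattice points. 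So the method is sound --- it just needs to be preceded by proofs of (i)--(iii).
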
 
\begin{proof}   Consider a
vertex $v_{\al\be}$ of $\De$ corresponding to the pair of vertices
 $\Hat v_\al,\Tilde v_\be$,
where $\Hat v_\al = \cap_{i\in \al} \Hat F_i\in \Hat \De$ for some $k$ element subset $\al\subset\{1,\dots,\Hat N\}$ and similarly for  $\Tilde v_\be\in \Tilde\De$.
The edge vectors at $v_{\al\be}$ divide into two groups.  There are $k$ primitive edge  vectors $
 e_{i}^{\al\be}$ in the plane $\R^k\times\Tilde v_\be$ that are parallel to the edge vectors of $\Hat\De$ through $\Hat v_\al$ (and hence are independent of the choice of $\Tilde v_\be$).  These lie in the face
 $\Hat \De_{\Tilde v_\be}$.  Similarly, there are $m$ others  $e_{j}^{\al\be}$
that lie in the face
$\Tilde \De_{\Hat v_\al}$ and project to the edge vectors $\Tilde e_{j}\,\!^{\be}$ of $\Tilde\De$ through $\Tilde v_\be$, but may also have nonzero components $\Hat e_j\,\!^{\al\be}$ in the $\R^k$ direction that depend on $\al,\be$. 
Moreover, we may label these edges 
  so that $e_{i}^{\al\be}$ is transverse to the facet $\Hat F_i\,\!'$ for each $i\in \al$
  and
$e_{j}^{\al\be}$ is transverse to the facet $\Tilde F_j\,\!'$ for each
$j\in \be$.  

Now consider (i).
Since
$
 v_{\al\be}+\sum _{i\in\al} e_{i}^{\al\be} + \sum_{j\in\be} e_{j}^{\al\be} =
  0\in \R^n
 $, we find
$$ 
w_{\al\be}: =v_{\al\be}+\sum _{j\in\be} e_{j}^{\al\be} = (\Hat w_{\al\be},0),\quad \mbox{
 where}\;\;\;
 \Hat w_{\al\be} +\sum_{i\in\al} e\,\!_{i}^{\al\be}=0.
 $$
In particular, the projection of $v_{\al\be}+\sum _{j\in\be} e_{j}^{\al\be}$ onto $\Tilde \ft^*$  vanishes.  Hence each vertex of $\Tilde \De$ satisfies the vertex-Fano condition with respect to $\{0\}$.  Therefore
 $\{0\}\in \intt\Tilde\De$, and by Remark \ref{rmk:inter}
  it must be  the unique   interior integral point in  $\Tilde \De$.
This proves  (i). (Note that (i) is immediately 
 clear if one thinks of the corresponding fibration of toric manifolds.)

The first statement in (ii) holds because the $\Hat\De_y$ are the intersections of $\De$ with the $k$-dimensional affine planes parallel to $\pi^*({\Hat\ft}\,\!^*)$ and so do not  depend on any choices.  

Now suppose that $y$ is integral. 
There is one vertex of $\Hat \De_y$ corresponding to each vertex $\Hat v_\al$ of  $\Hat\De$, namely  the intersection
$\Hat \De_y\cap \Tilde \De_{\Hat v_\al}$, and we must show that this is integral.
 Since $\Tilde\De$ is monotone, 
$y$ can be written as $\Tilde v_\be+\sum k_j\Tilde e_j$, where  $\Tilde v_\be$ is a vertex, $k_j\ge 0$ are integers,
and the  $\Tilde e_j$ are the primitive  integral  edge vectors at $\Tilde v_\be$. 
As explained above, there is a vertex $v_{\al\be}$ of $\De$ corresponding to the pair $\al,\be$, and there are corresponding
 primitive edge vectors  $e_j^{\al\be}$ of $\De$ at  $v_{\al\be}$ that project onto 
$\Tilde e_j$ and lie in the face  $\Tilde \De_{\Hat v_\al}$.  It follows immediately that  $v_{\al\be}+\sum k_je_j^{\al\be}$ is an integral point that  lies in
$\Tilde \De_{\Hat v_\al}$ and projects to $y$.  Hence it equals the intersection $\Hat \De_y\cap \Tilde \De_{\Hat v_\al}$.  This proves (ii).

Claim (iii) will follow once we show that $w_{\al\be}=(\Hat w_{\al\be},0)$ is the vertex of $\Hat\De_0$ corresponding to $\Hat v_\al$.  
But, by construction, each edge $e_{j}^{\al\be}$ lies in every facet 
$\Hat F_i', i\in \al$, as does $v_{\al\be}$. Therefore 
%
$$
 w_{\al\be}: =v_{\al\be}+\sum _{j\in\be} \Hat e_{j}\,\!^{\al\be}\in 
\bigcap_{i\in \al} \Hat F_i\,\!'.
 $$
 Thus it projects to $\Hat v_\al$ as claimed. This proves (iii).
 
 \begin{figure}[htbp] 
    \centering
\includegraphics[width=4in]{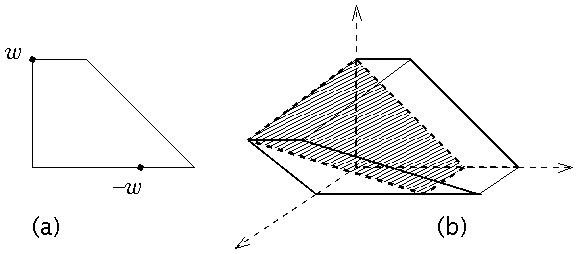} 
    \caption{ (a) shows the two points
     $\pm w\in \Ss(\Tilde\De)$
    where $\Tilde\De$ is the monotone one point blow up of $\De_2$.
     (b) illustrates  a $\Tilde \De$ bundle over $\De_1$, with the corresponding $\De_1$-bundle $\De_{w\R}$ shaded.  }
    \label{fig:10}
 \end{figure}
 Now consider (iv).  It is geometrically clear that $\Hat\De_{w\R}$ is a 
 $\De_1$-bundle over $\Hat\De$; cf. Figure \ref{fig:10}. It is integral by (ii).    
 
 To see that it is monotone, we shall check the vertex-Fano condition. To this end, note that for every vertex $\Hat v_\al$ of $\Hat\De$ there are two vertices of 
$\Hat\De_{w\R}$. Call them $v_{\pm\al}$, where  $v_{+\al}\in \Hat\De_w$.  Denote by $e$ the primitive vector along the edge from $v_{+\al}$ that 
 projects to $-w\in \R^m$.  
Then $ v_{+\al} + e=:\Hat v_{0\al}\in \Hat\De_0$.  The other edge vectors $\Hat e_j\,\!^{+\al}$ of 
$\Hat\De_{w\R}$ through $v_{+\al}$ are parallel to the edge vectors 
${\Hat e}_j\,\!^{\al0}$ of $
\Hat\De_0$ at the vertex $\Hat v_{\al0}$.
Hence  $$
\Hat v_{\al0} + \sum_j {\Hat e}_j\,\!^{\al 0} = \Hat v_{+\al} + e + \sum_{j\in \al }\Hat e_{j}\,\!^{+\al}=\{0\}
$$
 as required.  A similar argument applies to $v_{-\al}$.
\end{proof}

It follows that  if a monotone polytope $\De$ is a bundle we may identify its base with the slice $\Hat\De_0$.  Since its fiber and its base are monotone, it makes sense to consider the star Ewald condition for these polytopes.

 \begin{prop}\labell{prop:bun1}  Let $\De$ be a monotone polytope
that is a $\Tilde\De$-bundle over $\Hat\De$. Suppose that $\Tilde\De$
and all monotone $\De_1$-bundles over $\Hat\De$ satisfy the
  star Ewald condition.  Then 
 $\De$ satisfies the 
 star Ewald condition.
 \end{prop}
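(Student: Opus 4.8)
The plan is to verify the star Ewald condition of $\De$ face by face, using the two families of monotone sub-polytopes of $\De$ supplied by Lemma \ref{le:bun}: the central slice $\Hat\De_0$ and the $\De_1$-bundles $\Hat\De_{w\R}$ for $w\in\Ss(\Tilde\De)$. First I would record the combinatorial dictionary. Since $\De$ is combinatorially equivalent to $\Hat\De\times\Tilde\De$, every face $f$ of $\De$ has the form $f=\bigcap_{i\in A}\Hat F_i\,\!'\cap\bigcap_{j\in B}\Tilde F_j\,\!'$ and corresponds to the product face $\Hat g\times\Tilde h$ with $\Hat g=\bigcap_{i\in A}\Hat F_i$ and $\Tilde h=\bigcap_{j\in B}\Tilde F_j$; the facets of $\De$ containing $f$ are exactly the base facets $\Hat F_i\,\!'$ with $\Hat F_i\supseteq\Hat g$ together with the fiber facets $\Tilde F_j\,\!'$ with $\Tilde F_j\supseteq\Tilde h$. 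Two elementary observations drive the argument. Writing $p\colon\ft^*\to\Tilde\ft^*$ for the natural projection (so $p(\De)=\Tilde\De$), the defining inequality of a fiber facet factors through $p$ because its normal is $\iota(\Tilde\eta_j)$; hence $x\in\Tilde F_j\,\!'\iff p(x)\in\Tilde F_j$ for every $x\in\De$. Moreover, since $\Hat\De_{w\R}\subset\De$ shares the special point $0$, we have $\Ss(\Hat\De_{w\R})\subseteq\Ss(\De)$. I then split into the cases $\Tilde h\subsetneq\Tilde\De$ and $\Tilde h=\Tilde\De$ (assuming $\dim\Tilde\De\ge1$, so that $\Ss(\Tilde\De)\ne\emptyset$ by the star Ewald condition applied to any facet of $\Tilde\De$; the case $\dim\Tilde\De=0$ is trivial).

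\emph{Non-vertical case.} Suppose $\Tilde h$ is a proper face. Applying the star Ewald hypothesis for the fiber $\Tilde\De$ to $\Tilde h$ produces $w\in\Ss(\Tilde\De)$ with $w\in\istar(\Tilde h)$ and $-w\notin\Starr(\Tilde h)$. I would then form the monotone $\De_1$-bundle $\Hat\De_{w\R}$ of Lemma \ref{le:bun}(iv), whose top and bottom caps are the slices $\Hat\De_w$ and $\Hat\De_{-w}$, and apply its star Ewald condition to the face $\phi:=\Hat\De_w\cap\bigcap_{\Hat F_i\supseteq\Hat g}\Hat F_i\,\!'$ (the copy of $\Hat g$ in the top cap). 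This yields $\mu\in\Ss(\Hat\De_{w\R})\subseteq\Ss(\De)$ lying in exactly one facet of $\Hat\De_{w\R}$ through $\phi$, with $-\mu$ in none. The facets through $\phi$ are the top cap and the base facets $\Hat F_i\,\!'$ with $\Hat F_i\supseteq\Hat g$, and $\mu$ settles $f$ in either alternative. If $\mu$ lies in the top cap then $p(\mu)=w$, so by the equivalence above $\mu$ meets exactly the one fiber facet of $f$ dictated by $w\in\istar(\Tilde h)$ and no base facet of $f$, while $p(-\mu)=-w\notin\Starr(\Tilde h)$ keeps $-\mu$ off every fiber facet; thus $\mu\in\istar_\De(f)$ and $-\mu\notin\Starr_\De(f)$. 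If instead the single facet is a base facet, then $\mu\notin\Hat\De_w$ and, from $-\mu\notin\Starr(\phi)$, also $-\mu\notin\Hat\De_w$; since a symmetric integral point of $\Hat\De_{w\R}$ projects into $\{-w,0,w\}$, this forces $p(\mu)=0$, so $\mu$ lies in the central slice and meets no fiber facet, and it meets exactly one base facet of $f$ while $-\mu$ meets none. Again $\mu\in\istar_\De(f)$ and $-\mu\notin\Starr_\De(f)$.

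\emph{Vertical case.} If $\Tilde h=\Tilde\De$ then no fiber facet contains $f$, so only the base facets through $\Hat g$ are relevant. Here I would take any $w\in\Ss(\Tilde\De)$ and apply the star Ewald condition of $\Hat\De_{w\R}$ to the vertical face $\bigcap_{\Hat F_i\supseteq\Hat g}(\Hat F_i\,\!'\cap\Hat\De_{w\R})$. Because the caps do not contain this face, they impose no constraint, so the resulting $\mu\in\Ss(\Hat\De_{w\R})\subseteq\Ss(\De)$ meets exactly one base facet through $\Hat g$ with $-\mu$ meeting none; transferring to $\De$ gives $\mu\in\istar_\De(f)$ and $-\mu\notin\Starr_\De(f)$, as required.

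\emph{Main obstacle.} Once the dictionary is in place the verifications are routine, so the real content is organizational. The crux is the identity $x\in\Tilde F_j\,\!'\iff p(x)\in\Tilde F_j$ together with the observation that applying the $\De_1$-bundle's star Ewald to the lifted face $\phi$ automatically disposes of both alternatives: in the ``base facet'' alternative the output is forced to project to $0$ and hence to lie in the central slice, where it meets no fiber facet at all. I expect the fussiest points to be confirming that $\phi$ is a genuine nonempty face of $\Hat\De_{w\R}$ (it is the copy of $\Hat g$ in the top cap, which is combinatorially a copy of $\Hat\De$) and that every symmetric integral point of the monotone $\De_1$-bundle $\Hat\De_{w\R}$ projects into $\{-w,0,w\}$ in the fiber, so that ``not in either cap'' really does force the projection to vanish.
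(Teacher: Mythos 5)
Your proof is correct, but it is organized differently from the paper's. The paper splits on whether the face $F_{\al\be}$ involves a base facet: for $\al\ne\emptyset$ it first deduces that $\Hat\De$ is star Ewald (from the hypothesis applied to the product $\Hat\De\times\De_1$) and then produces the required symmetric point inside the central slice $\Hat\De_0\cong\Hat\De$, where it automatically misses every fiber facet; only for the pure fiber faces ($\al=\emptyset$) does it invoke $w\in\Ss(\Tilde\De)$ and the bundle $\Hat\De_{w\R}$, and even then it uses only the star Ewald condition of $\Hat\De_{w\R}$ at its facet $\Hat\De_w$, i.e.\ the nonemptiness of $\Ss(\Hat\De_{w\R})\cap\Hat\De_w$. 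You instead split on whether a fiber facet is involved and funnel every case through the star Ewald condition of $\Hat\De_{w\R}$ at the lifted face $\phi$ (or at the vertical face when $\be=\emptyset$), resolving the resulting dichotomy with the observation that a symmetric point of $\Hat\De_{w\R}$ lying in neither cap must lie in the central slice. Both routes are valid: yours draws on more of the $\De_1$-bundle hypothesis (star Ewald at arbitrary faces of $\Hat\De_{w\R}$, not just at its fiber facets) but never needs to establish separately that $\Hat\De$ itself is star Ewald, whereas the paper's treatment of the mixed faces via $\Hat\De_0$ is shorter. The two supporting facts you flag do hold: $p$ maps $\Ss(\Hat\De_{w\R})$ into $\{-w,0,w\}$ because any $w\in\Ss(\Tilde\De)$ lies on some facet, so $\langle\Tilde\eta_j,w\rangle=1$ forces $c\in\{-1,0,1\}$ for an integral point over $cw$; and $x\in\Tilde F_j'$ iff $p(x)\in\Tilde F_j$ because both $\De$ and $\Tilde\De$ are normalized with all support constants equal to $1$ and the fiber facet normals are $\iota(\Tilde\eta_j)$. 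So there is no gap.
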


\begin{proof}   Lemma~\ref{le:bun} for $\De$ implies that $\Hat\De$ is monotone.  Hence so is the product $\Hat\De\times \De_1$ where $\De_1=[-1,1]$ is monotone.  Using the fact that $\Hat\De\times \De_1$ is star Ewald, one easily deduces that $\Hat\De$ is as well.

Now, consider the face 
$$
f= \left(\bigcap_{i\in \al} \Hat F_i'\right)\cap
\left(\bigcap_{j\in \be} \Tilde F_j'\right)=: F_{\al\be}
$$
of $\De$.
Then $\Starr f \cap \Hat\De_0\ne \emptyset$ provided that
$\al\ne \emptyset$.   Further, 
because $\{0\}$ is in the interior of all faces of $\Tilde\De$, the intersection
$\starr f \cap \Hat\De_0$ may be identified with  $\starr \Hat F_\al$ in $\Hat \De$,
where, as usual, $\Hat F_\al: = \cap_{i\in \al}\Hat F_i$.  Thus,
 when $\al\ne \emptyset$ the star Ewald condition for 
the face $\Hat F_\al$ in $\Hat\De$  implies that for $F_{\al\be}$ in $\De$.

Now consider $F_\be: = F_{\emptyset\be},$ and write $\Tilde F_\be = \cap_{j\in \be} \Tilde F_j$.  Thus $F_\be\subset \R^k\times\Tilde F_\be$ is a union of the slices $\Hat\De_y, y\in \Tilde F_\be$.
By the star Ewald condition for $\Tilde\De$ there is $w\in   \Ss(\Tilde\De)$ such that $w \in \istar \Tilde F_\be$ and $-w\notin \Starr \Tilde F_\be$.
Then $\istar F_\be$ contains the facet $\Hat\De_w$ of the polytope $\Hat\De_{w\R}$ considered in part (iv) of Lemma~\ref{le:bun}.
(In fact, $\istar F_\be$ can be identified with the union of all slices $\Hat\De_y$ with $y\in \istar \Tilde F_\be$.)  By hypothesis,
the facet $\Hat\De_w$
 has the star Ewald condition in $\Hat\De_{w\R}$. That is,  there is an element $\la\in\Ss(\Hat\De_{w\R})$ in $\Hat\De_w$. By construction,
 $\la$ has the form $(x,w)$ for some $x\in \R^k$.   Therefore $-\la=(-x,-w)$ projects to $-w\not\in \Starr \Tilde F_\be$ and so $-\la\not\in
 \Starr  F_\be$.  Since  
 $\Ss(\Hat\De_{w\R})\subset \Ss(\De)$, the result follows.
 \end{proof}
 
 The following result was proved in the course of the above argument.
 
 \begin{lemma}
 If $\Hat\De$ is star Ewald, then a $\De_1$-bundle over $\Hat\De$ is also star Ewald provided that 
one (and hence both) of its fiber facets contains a symmetric point, i.e. intersects $\Ss(\De)$.  
\end{lemma}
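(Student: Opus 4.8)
The plan is to recognize that this statement is precisely what the proof of Proposition~\ref{prop:bun1} delivers once the fiber $\Tilde\De$ is specialized to the one-simplex $\De_1=[-1,1]$, with the recursive hypothesis on the auxiliary bundles $\Hat\De_{w\R}$ replaced by the single assumption on the fiber facets. Working in the monotone setting of that proposition, recall that $\De_1$ has exactly two facets, namely its two endpoints, and that these are disjoint. Consequently the index set $\be$ labelling fiber facets in a face $F_{\al\be}=(\cap_{i\in\al}\Hat F_i')\cap(\cap_{j\in\be}\Tilde F_j')$ of $\De$ is either empty or a single endpoint, so every proper face of $\De$ falls into one of two families: those with $\al\ne\emptyset$, and the two fiber facets $F_{\emptyset\be}$ with $|\be|=1$. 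I would first record the ``one, hence both'' assertion: if $w\in\Ss(\De)$ lies in one fiber facet then $-w\in\Ss(\De)$ projects to the opposite endpoint of $\De_1$ and so lies in the other fiber facet.

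For the faces with $\al\ne\emptyset$ I would reproduce verbatim the argument already given in Proposition~\ref{prop:bun1}. By Lemma~\ref{le:bun}(iii) the base may be identified with the monotone central slice $\Hat\De_0=\De\cap(\R^k\times\{0\})$, whose facets are the traces of the base facets $\Hat F_i'$ and which satisfies $\Ss(\Hat\De_0)\subset\Ss(\De)$. Since $\Hat\De$ is star Ewald, the face $\Hat F_\al=\cap_{i\in\al}\Hat F_i$ admits a witness $\mu\in\Ss(\Hat\De_0)$ with $\mu\in\istar\Hat F_\al$ and $-\mu\notin\Starr\Hat F_\al$. Because $\{0\}$ lies in the interior of the fiber, the lift of $\mu$ to $\Hat\De_0\subset\De$ meets no fiber facet; this is exactly what is needed to conclude that $\mu\in\istar F_{\al\be}$ while $-\mu\notin\Starr F_{\al\be}$, so that $F_{\al\be}$ is star Ewald. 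Note that here $\be$ is arbitrary, so this settles every face with $\al\ne\emptyset$.

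The only remaining faces are the two fiber facets $F_{\emptyset\be}$, and here the hypothesis supplies precisely what is otherwise missing. By assumption each fiber facet meets $\Ss(\De)$, and since a symmetric point and its negative can never lie in a common facet, Remark~\ref{rmk:star}(i) gives the star Ewald condition for each of them at once. Together with the previous case this exhausts all proper faces of $\De$, so $\De$ is star Ewald.

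I do not expect a genuine obstacle here: the entire content is already contained in the proof of Proposition~\ref{prop:bun1}, and the purpose of isolating the lemma is simply to observe that, for a $\De_1$-bundle, the one step of that proof requiring a recursive input---namely the fiber-facet faces $F_{\emptyset\be}$, handled there by invoking the star Ewald property of the auxiliary bundle $\Hat\De_{w\R}$---collapses to the bare assumption that the fiber facets intersect $\Ss(\De)$.
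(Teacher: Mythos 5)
Your proposal is correct and is essentially the paper's own proof: the paper states this lemma with the remark that it ``was proved in the course of the above argument,'' i.e.\ the proof of Proposition~\ref{prop:bun1}, and you have accurately extracted that argument, handling the faces $F_{\al\be}$ with $\al\ne\emptyset$ via the star Ewald property of $\Hat\De\cong\Hat\De_0$ and the two fiber facets via Remark~\ref{rmk:star}(i) together with the stated hypothesis. The ``one, hence both'' observation via $w\mapsto -w$ is also the intended one.
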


   \begin{cor}  A product $\Hat\De\times \Tilde\De$ of monotone polytopes is star Ewald if and only if its two factors are.
   \end{cor}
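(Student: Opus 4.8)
We prove the equivalence directly, by matching the faces, stars and symmetric points of the product with those of its factors. Write $\De=\Hat\De\times\Tilde\De\subset\R^k\times\R^m$; this is monotone (it is the trivial $\Tilde\De$-bundle over $\Hat\De$, and the vertex-Fano identity at a vertex $(\Hat v,\Tilde v)$ is the sum of those at $\Hat v$ and $\Tilde v$). Its facets are the base facets $\Hat F_i\times\Tilde\De$ and the fiber facets $\Hat\De\times\Tilde F_j$, and every proper face has the form $f=\Hat F_\al\times\Tilde F_\be$ with $\al,\be$ not both empty, where $\Hat F_\al=\cap_{i\in\al}\Hat F_i$ and $\Tilde F_\be=\cap_{j\in\be}\Tilde F_j$. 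The one bookkeeping fact I would record first is that, since $\{0\}$ lies in the interior of each factor, a nonzero integral point $\la=(\Hat\la,\Tilde\la)$ lies in $\Ss(\De)$ exactly when $\Hat\la\in\Ss(\Hat\De)\cup\{0\}$ and $\Tilde\la\in\Ss(\Tilde\De)\cup\{0\}$.

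To show that star Ewald factors force $\De$ star Ewald, I would take a face $f=\Hat F_\al\times\Tilde F_\be$ and suppose first that $\al\ne\emptyset$. Choosing $\Hat\la\in\Ss(\Hat\De)$ that witnesses the star Ewald condition for $\Hat F_\al$ in $\Hat\De$, I set $\la:=(\Hat\la,0)\in\Ss(\De)$. Because $0$ is interior to $\Tilde\De$, neither $\la$ nor $-\la$ meets any fiber facet $\Hat\De\times\Tilde F_j$; hence whether $\pm\la$ lies in $\Starr f$, or in the codimension-two pieces forming $\starr f$, is decided purely by the base facets and reduces to the corresponding statements for $\pm\Hat\la$ relative to $\Hat F_\al$. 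This yields $\la\in\istar f$ and $-\la\notin\Starr f$. When $\al=\emptyset$ (so $\be\ne\emptyset$) the symmetric choice $\la=(0,\Tilde\la)$ with $\Tilde\la\in\Ss(\Tilde\De)$ works identically. (Alternatively this direction is Proposition \ref{prop:bun1}: for the trivial bundle each slice $\Hat\De_y$ is just $\Hat\De$, so the auxiliary $\De_1$-bundles $\Hat\De_{w\R}$ occurring there are the products $\Hat\De\times\De_1$, whose fiber facets contain the symmetric points $(0,\pm w)$ and which are therefore star Ewald by the preceding Lemma.)

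For the converse I would run the same correspondence backwards. Assuming $\De$ is star Ewald, I take an arbitrary nonempty $\al$ and apply the condition to the face $f=\Hat F_\al\times\Tilde\De$. No fiber facet contains $f$, so $\Starr f=\cup_{i\in\al}(\Hat F_i\times\Tilde\De)$ involves only base facets; consequently a witness $\la=(\Hat\la,\Tilde\la)$ for $f$ projects to a point $\Hat\la$ that lies in exactly one $\Hat F_i$ ($i\in\al$) with $-\Hat\la$ in none of them. Moreover $\Hat\la\ne0$, since $\la\in\istar f\subset\Starr f$ forces $\Hat\la$ into some $\Hat F_i$ whereas $0$ is interior to $\Hat\De$; so $\Hat\la\in\Ss(\Hat\De)$ certifies star Ewald for $\Hat F_\al$. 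As $\al$ is arbitrary $\Hat\De$ is star Ewald, and $\Tilde\De$ is treated symmetrically.

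The genuine content lies entirely in the middle step---verifying that padding a witness with a zero coordinate keeps it inside $\istar f$ while its negative stays out of $\Starr f$. This is exactly where one uses that $\{0\}$ is an interior, hence non-facet, point of the complementary factor, so that the extra coordinate cannot accidentally drag $\pm\la$ into a fiber facet. I expect this bookkeeping, rather than either logical direction, to be the only place requiring care.
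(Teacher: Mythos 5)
Your proof is correct and is exactly the direct verification the paper alludes to (its proof of this corollary is only the remark ``this is easy to check directly,'' plus the observation that Proposition \ref{prop:bun1} reduces the ``if'' direction to products with $\De_1$, which you also note). The bookkeeping you single out --- that $\{0\}$ is interior to the complementary factor, so padding a witness with a zero coordinate never puts $\pm\la$ into a fiber facet or a mixed codimension-two face --- is indeed the whole content, and you have it right in both directions.
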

   
   \begin{proof}  This is easy to check directly.  However, the proof of
   Proposition \ref{prop:bun1} allows one to reduce the
 proof of the \lq\lq if" statement to the case of $\De_1\times \De_1$.
 \end{proof}
 
 \begin{cor} \labell{cor:bun} Let $\Tilde\De$ be star Ewald.  Then every monotone $\Tilde\De$-bundle over the simplex $\De_k$ is star Ewald.
 \end{cor}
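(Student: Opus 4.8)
The plan is to deduce this directly from Proposition \ref{prop:bun1}. Taking $\Hat\De = \De_k$, the hypotheses of that proposition are: the fiber $\Tilde\De$ is star Ewald (which is given), and every monotone $\De_1$-bundle over $\De_k$ is star Ewald. So the entire corollary reduces to this second assertion as a base case. It is essential to notice that this base case cannot itself be obtained by reapplying Proposition \ref{prop:bun1}, as that would be circular; it must be settled directly. The right tool is the unlabelled lemma stated just before this corollary: once $\Hat\De = \De_k$ is known to be star Ewald, a $\De_1$-bundle over it is star Ewald as soon as one of its two fiber facets meets $\Ss(\De)$. Hence the whole proof comes down to (a) checking that the simplex $\De_k$ is star Ewald, and (b) producing a symmetric integral point lying on a fiber facet of an arbitrary monotone $\De_1$-bundle over $\De_k$.

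For (a) I would use the standard monotone model $\De_k = \{x\in\R^k : x_i\ge -1\ (1\le i\le k),\ \sum_j x_j\le 1\}$, with facets $B_i = \{x_i=-1\}$ and $B_0 = \{\sum_j x_j = 1\}$. The vectors $\pm e_i$ and $\pm(e_i-e_j)$ all lie in $\Ss(\De_k)$, and for a proper face $f = \cap_{i\in I}B_i$ one of them witnesses the star Ewald condition: if $I\subseteq\{1,\dots,k\}$ take $\la = -e_{i_0}$ for some $i_0\in I$, while if $0\in I$ take $\la = e_{i_0}$ for some $i_0\notin I$ (such an $i_0$ exists since $|I|\le k$). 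A short check shows $\la$ meets exactly one facet of $\Starr(f)$ and $-\la$ meets none, so $\De_k$ is star Ewald for every $k$.

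For (b), after an integral shear in the fiber direction (an automorphism of $\Z^{k+1}$, hence preserving $\Ss$, the faces, and the star structure) I would bring the bundle to the normalized form
$$
\De = \bigl\{(x,y)\in\R^k\times\R : x_i\ge -1\ (1\le i\le k),\ \sum_j x_j + a y\le 1,\ |y|\le 1\bigr\},
$$
with a single integer twist $a$, the base normals $(\Hat\eta_1,\dots,\Hat\eta_k)$ forming a basis so that all but one of the twisting constants can be cleared. Requiring the two slices $\{y=\pm1\}$ to be genuine $k$-simplices, so that $\De$ is combinatorially $\De_k\times\De_1$ and thus a bundle at all, is exactly the condition $|a|\le k$; and in this range $\De$ is indeed monotone, being a smooth lattice polytope with all support constants equal to $1$ and $\{0\}$ in its interior (Remark \ref{rmk:inter}). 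Reflecting $y$ to assume $a\ge 0$, I would then exhibit the point $\la_S = \bigl(-\sum_{j\in S}e_j,\,1\bigr)$ for any $S\subseteq\{1,\dots,k\}$ with $|S|=a$; the bound $a\le k$ is precisely what makes such an $S$ available. One checks $\la_S\in\De$ (its base coordinates sum to $-a\le 1-a$) and $-\la_S = \bigl(\sum_{j\in S}e_j,-1\bigr)\in\De$ (base coordinates summing to $a$, with $a+a(-1)=0\le 1$), so $\la_S\in\Ss(\De)$ lies on the top fiber facet $\{y=1\}$. This is the required symmetric point.

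The single genuinely delicate point is the extreme twist $a=k$, where the top slice $\{y=1\}$ collapses to a unit $k$-simplex, a "small facet" of exactly the kind that caused trouble in the three-dimensional analysis of \S\ref{s:special}. There $\la_S = (-1,\dots,-1,1)$ is forced and the verification that both $\la_S$ and $-\la_S$ lie in $\De$ is tight, so this is the step I would write out most carefully. Everything else is routine: the reduction to the base case via Proposition \ref{prop:bun1}, the deliberate use of the preceding lemma (rather than the proposition) to avoid circularity, and the elementary face-by-face check for the simplex.
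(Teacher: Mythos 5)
Your proof is correct and follows essentially the same route as the paper: reduce via Proposition \ref{prop:bun1} to $\De_1$-bundles over $\De_k$, normalize to a single integer twist $a$ with $|a|\le k$, and exhibit a symmetric lattice point on a fiber facet having exactly $|a|$ nonzero base coordinates (your $\la_S$ is, up to the reflection $y\mapsto -y$ and choice of sign convention for the twist, the paper's point $v$). The only difference is that you explicitly verify that $\De_k$ is star Ewald and explicitly route the base case through the unlabelled lemma preceding the corollary, both of which the paper leaves implicit.
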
 
 \begin{proof}  By Proposition \ref{prop:bun1} it suffices to consider the case $\Tilde\De=\De_1$, and to show in this case that the two fiber facets
 $\Hat\De_{\pm}$ are star Ewald. We may choose coordinates  $(x,y)\in \R^k\times \R$ so that $\De$ is given by
 the inequalities
 $$
-1\le y\le 1,\;\;  x_i\ge -1, i=1,\dots,k, \;\;\sum_{i=1}^{k}x_i\le 1+\al y,
$$
for some integer $\al\ge 0$.
 Then
 the fiber facets $\Hat\De_+ = \{y=1\}$ and $\Hat\De_- = \{y=-1\}$ are  integral 
 simplices with side lengths $k+1+\al$ and $k+1-\al
 $ respectively.  Since the case $\al=0$ is obvious we may suppose that $1\le \al\le k$.   
Let  $v=(v_1,\dots,v_{k+1})$ be a point with
$v_{k+1}=1$ and with precisely $\al$ of the coordinates $v_1,\dots, v_{k}$ equal to $1$, while the others are $0$. 
Then 
$v\in \Hat\De_+$ while $-v\in \Hat\De_-$.   
\end{proof}
  
  \begin{rmk}\rm  (i)  One should be able to prove the analog of Corollary \ref{cor:bun} for all bases $\Hat\De$.  However, this does not seem easy.
The following discussion  explains what the problem is, and proves a special case.

 Let $\Hat\eta_i, i\in \{1,\dots,\Hat N\},$ be the (outward) normals of a smooth $k$-dimensional 
  polytope $\Hat\De$, chosen so that  the first $k$ are the negatives of a standard basis.
Then, with $(x,y)$  as above, every $\De_1$-bundle over $\Hat \De$ 
has the form
$$
-1\le y\le 1,\;\;  x_j\ge -1, j=1,\dots,k, \;\;\langle x,\eta_i\rangle \le \ka_i-a_iy, \;i>k.
$$
If $\Hat\De$ is monotone, we may take $\ka_i=1$ for all $i$ so that $\De$ is determined by the $\Hat N-k$ integers  $a_i, i>k$, where we set $
a_i=0,i\le k$.   It is easy to check that $\De$ is monotone provided that it is combinatorially equivalent to a product.  This will be the case exactly when the top and bottom facets 
$\{y=\pm1\}$ of $\De$ are analogous. However, it is not clear 
in general what conditions this imposes on the constants $a_i$, except that they cannot be too large.\footnote
{
The  constants $a_i$ determine and are determined by
  the Chern class of the corresponding $\PP^1$ bundle over $\Hat M$.}
  More precisely,  if the edge $e_{i\ell}$ meets the facets $\Hat F_i$ and $\Hat F_\ell$ transversally, 
  then then, because the length $L(e_{i\ell}^{\pm})$ of the corresponding edges in $\{x=\pm1\}$ must be at least $1$, we must have
  $$
  L(e_{i\ell})  \ge |a_i-a_\ell| + 1,\quad \forall i,\ell.
  $$
Thus, if $\Hat\De$ has many short edges then it supports  few monotone bundles.
    One easy case is when the $|a_i|$ are all $\le 1$.   For then the points $(0,\dots,0,\pm 1)$ lie in $\Ss(\De)$ so that $\De$ is star Ewald exactly if $\Hat\De$ is.

One can rephrase  these conditions by making a different choice of coordinates.
If there is a symmetric point $w$ in $\{y=1\}$ then we can use $w$ instead of
$(0,\dots,0,1)$ as the last basis vector, keeping the others unchanged.  
Then $\De$ is given by  equations of the form
\begin{equation}\labell{eq:Hat}
-1\le y\le 1,\;\;  x_i\ge -1+b_i y, i=1,\dots,k, \;\;\langle x,\eta_i\rangle \le 1-b_iy, \;i>k,
\end{equation}
where we must have $|b_i|\le 1$ for all $1\le i\le \Hat N$ because now $(0,\dots,0,\pm 1)\in \De$ by our choice of coordinates.  It follows easily that, assuming $\Hat \De$ is star Ewald, then $\De$ is star Ewald if and only if it may be given by equations of the form (\ref{eq:Hat}).
\MS

\NI
(ii)  The problem considered in (i) above is a special case of the following question.
Consider a  symplectic $S^2$ bundle $S^2\to (X,\om)\stackrel{\pi}\to \Hat X$  with symplectic base  $(\Hat X,\Hat \om)$, where we assume that $\om$ is nondegenerate on each fiber.   If a subset $L\subset \Hat X$ is displaceable by a Hamiltonian isotopy, is it true that  its 
inverse image $\pi^{-1}(L)$ is displaceable in $(X,\om)$?  
At first glance, one might think this is obviously true.  However one cannot assume that
there is a simple relation between $\om$ and $\pi^*(\Hat\om)$, and so there is no obvious way to lift an isotopy of $\Hat X$ to one of $X$.  In fact the awkwardness of the definition of toric bundle is one indication of the subtlety of this relation.  \end{rmk}  

\end{document}